\documentclass[a4paper,reqno]{amsart}

\usepackage[utf8]{inputenc}
\usepackage{graphicx}
\usepackage{amsmath}
\usepackage{amsfonts}
\usepackage{amssymb}
\usepackage{amsthm}
\usepackage{cleveref}
\usepackage{mathtools}
\usepackage{microtype}
\usepackage{fullpage}
\usepackage[foot]{amsaddr}
\usepackage[usenames,dvipsnames]{xcolor}
\usepackage{mathrsfs}  
\usepackage{wrapfig}
\usepackage{enumitem}
\usepackage{dsfont}
\usepackage{graphicx}
\usepackage{caption}

\numberwithin{equation}{section}

\theoremstyle{plain}
\newtheorem{theorem}{Theorem}[section]
\newtheorem{lemma}[theorem]{Lemma}
\newtheorem{corollary}[theorem]{Corollary}
\newtheorem{proposition}[theorem]{Proposition}

\theoremstyle{definition}
\newtheorem{definition}[theorem]{Definition}
\newtheorem{remark}[theorem]{Remark}

\setlength{\parskip}{0.4\baselineskip plus 0.2\baselineskip minus 0.2\baselineskip}
\setlength\parindent{0pt}

\begin{document}

\title[Aperiodic order, spectral metrics and Jarn\'{\i}k sets]{A classification of aperiodic order via spectral metrics and Jarn\'{\i}k sets}

\author{M.\ Gr\"oger, M.\ Kesseb\"ohmer, A.\ Mosbach, T.\ Samuel and M.\ Steffens}

\address[M.\ Gr\"oger]{Faculty of Mathematics and Computer Science, Friedrich-Schiller University Jena, Ernst-Abbe-\newline
\hspace*{1.35em}Platz 2, 07743 Jena, Germany}
\address[M.\ Kesseb\"ohmer, A.\ Mosbach and M.\ Steffens]{FB 3 -- Mathematik und Informatik, Universit\"at Bremen\\
\hspace*{1.35em}Bibliothekstr. 1, 28359 Bremen, Germany}
\address[T.\ Samuel]{Mathematics Department, California Polytechnic State University, San Luis Obispo, CA, USA}

\dedicatory{Dedicated to the memory of Bernd O.\ Stratmann (1957-2015) -- A good friend, colleague and mentor.}

\subjclass[2010]{52C23, 68R15, 94A55, 47C15, 11K60, 37C45, 11J70, 46L87}

\maketitle

\vspace*{-3.25em}

\begin{abstract}
Given an $\alpha > 1$ and a $\theta$ with unbounded continued fraction entries, we characterise new relations between Sturmian subshifts with slope $\theta$ with respect to (i) an $\alpha$-H\"oder regularity condition of a spectral metric, (ii) level sets defined in terms of the Diophantine properties of $\theta$, and (iii) complexity notions which we call $\alpha$-repetitive, $\alpha$-repulsive and $\alpha$-finite; generalisations of the properties known as linearly repetitive, repulsive and power free, respectively.  We show that the level sets relate naturally to (exact) Jarn\'{\i}k sets and prove that their Hausdorff dimension is $2/(\alpha + 1)$.
\end{abstract}

\section{Introduction and outline}

\subsection{Introduction}

Links between regularity of spectral metrics built from noncommutative representations (spectral triples) and aperiodic behaviour of Sturmian subshifts, in the case that the continued fraction entries of the slope are bounded, were first observed in \cite{KS:2012}.  We show new relations between regularity properties of spectral metrics of Sturmian subshifts (where the continued fraction entries of the slopes are unbounded), fractal level sets (defined in terms of the Diophantine properties of $\theta$) and related complexity properties (which generalise and extend known notions of aperiodic behaviour) of Sturmian subshifts.  Here, the nontrivial and challenging task was to determine the exact Diophantine condition on $\theta$ and the optimal regularity on the spectral metrics, namely well-approximable of $\alpha$-type (\Cref{defn:*}) and sequential H\"older regularity (\Cref{defn:holderregular1,defn:holderregular2}) respectively.  In defining the fractal level sets the so-called Jarn{\'\i}k sets surface in a very natural way, and as such, our findings provide a nice application of this prominent class of number theoretic sets to aperiodic order.

The full shift over a finite alphabet $\mathscr{A}$ is the $\mathbb{N}$-action given by the left-shift $\sigma$ on the set of infinite $\mathscr{A}$-valued sequences.  A subshift $X$ is the restriction of this dynamical system to a closed \mbox{$\sigma$-invariant} subspace, see \Cref{sec:defAO}; of particular interest are minimal aperiodic subshifts, the prototypes being Sturmian subshifts.  Properties of such subshifts are encoded in the $C^{*}$-algebras $C(X) \rtimes_{\sigma} \mathbb{Z}$ and $C(X)$.

The central object in Connes' theory of noncommutative geometry is that of a spectral triple, for which one of the predominant motivations is to analyse geometric spaces, or dynamical systems, using operator algebras, particularly $C^{*}$-algebras.  This idea first appeared in the work of Gelfand and Na\u{\i}mark \cite{Gelfand+Neumark}, where it was shown that a $C^{*}$-algebra can be seen as a generalisation of the ring of complex-valued continuous functions on a locally compact space.  In \cite{NCDGCones,C:1994}, Connes formalised the notion of noncommutative geometry and, in doing so, showed that the tools of Riemannian geometry can be extended to non-Hausdorff spaces known as ``bad quotients'' and to spaces of a ``fractal'' nature.  In particular, Connes proposed the concept of a spectral triple $(\mathcal{A}, H, D)$. The $C^{*}$-algebra $\mathcal{A}$ acts faithfully on the Hilbert space $H$ together with an (essentially) self-adjoint operator $D$, called the Dirac operator, which has compact resolvent and bounded commutator with the elements of a dense sub-$*$-algebra of $\mathcal{A}$.  Additionally, Connes defined a pseudo-metric on the state space $\mathcal{S}(\mathcal{A})$ of $\mathcal{A}$  analogous to how the Monge-Kantorovitch metric is defined on the space of Borel probability measures supported on a given compact metric space.  

Subsequently, Rieffel \cite{Ri2} and Pavlovi\'c \cite{Pav} established conditions under which Connes' pseudo-metric is a metric and conditions under which the topology of Connes' pseudo-metric is equivalent to the weak-${*}$-topology, see \Cref{prop:Not_Metric} for a counter-part to the metric results of \cite{Pav,Ri2} in our setting.

While spectral triples for cross product algebras of the form $C(X) \rtimes_{\sigma} \mathbb{Z}$ seem difficult to set up, see for instance \cite{BMR:2010,C:1989,Ri3} and references therein, there has been a lot of activity in constructing spectral triples for commutative $C^{*}$-algebras $C(Y)$, where $Y$ does not carry an obvious differential structure.  A series of works has been devoted to general metric spaces \cite{expfast-finte-spec1,Pav,Ri2,Ri3} and specially to sets of a fractal nature \cite{STCS,DOST,Bellisard,C:1994,Self-reference-1,GI1,GI2,JP:2015,KS:2012,KessSam:2013,Lapidus3,L:2008,Sharp:2012}. 

Kellendonk and Savinien \cite{KS:2012} proposed a modification of the spectral triple and spectral metric pioneered by Bellissard and Pearson \cite{Bellisard}, which in turn stems from the work of Connes \cite{C:1994} and Guido and Isola \cite{GI1,GI2}, that can be used to analysis Sturmians subshifts; this construction was later generalised to minimal subshifts over a finite alphabet in \cite{KLS:2011}.  It is with the spectral triple and spectral metric of \cite{KS:2012} that we will work.  As is often the case, once one is lead to consider certain objects by an abstract theory (here spectral triples) and these objects turn out to be useful in another field (here aperiodic order) one finds that they can be defined \emph{ad hoc}, that is, without any knowledge of the abstract theory.  This is the case here, and so, in the sequel we work with the combinatorial version of the spectral metric given in \cite{KS:2012}.

The essential ingredients in the construction of \cite{KS:2012} are an infinite weighted graph (augmented weighted tree, as introduced by Kaimanovich \cite{K:2001}), whose (hyperbolic) boundary is homeomorphic to the given Sturmian subshift, and the notion of a choice function, which can be seen as the noncommutative analogue of a vector in the tangent space of a Riemannian manifold.  The main result of \cite{KS:2012} showed that the spectral metric is Lipschitz equivalent to the underlying ultra metric if and only if the continued fraction entries of the slope of the Sturmian subshift are bounded, which in turn is equivalent to several known notions of aperiodic behaviour, as we will shortly explain in further detail.  The typical choice of weightings used in \cite{KS:2012} to define the spectral triple (in particular the Dirac operator) is suggested to be $\delta_{n} = \ln(n) n^{-t}$, where $t > 0$, and investigations of spectral metrics when $\delta_{n} = n^{-1}$ have recently been carried out in \cite{JP:2015}.  Thus our choice of the weightings $\delta_{n} = n^{-t}$, for $t > 0$, is a natural choice, generalising and extending this line of research.

In the case of a Sturmian subshift of slope $\theta$ having unbounded continued fraction entries, in \Cref{thm:thm2,thm:thm2.5} and \Cref{cor:cor-main}, we give necessary and sufficient conditions  (well-approximable of $\alpha$-type, see \Cref{defn:*}) on the Diophantine properties of $\theta$ for when the spectral metric, proposed in \cite{KS:2012}, is sequentially H\"older regular to the underlying ultra metric.  Moreover, we show that the sequential H\"older regularity cannot be strengthen to H\"older equivalence, see \Cref{rmk:strengthening}.  Additionally, in \Cref{thm:thm3} we compute the Hausdorff dimension of the set $\Theta_{\alpha}$ of $\theta$'s which are well-approximable of $\alpha$-type, by relating $\Theta_{\alpha}$ to Jarn\'{\i}k and exact Jarn\'{\i}k sets and by using the results of \cite{B:2003,B:2008,DK:2002,K:1997}.

The theory of aperiodic order is a relatively young field of mathematics which has attracted considerable attention in recent years, see for instance \cite{AR:1991,BG:2013,BaMo:2000,BFMS:2002,FGJ:2015,KLS:2011,KS:2012,Mo:1997,MH:1940,P:1998,S:2015}.  It has grown rapidly over the past three decades; on the one hand, due to the experimental discovery of physical substances, called quasicrystals, exhibiting such features \cite{INF:1985,SBGC:1984}; and on the other hand, due to intrinsic mathematical interest in describing the very border between crystallinity and aperiodicity.  Here, of particular interest are point sets, such as Delone sets, of which Sturmian subshifts are the quintessential examples.  While there is no axiomatic framework for aperiodic order, various types of order conditions, in terms of complexity, have been studied, see \cite{AR:1991,BG:2013,BFMS:2002,FGJ:2015,HKW:2015,KS:2012,Lag:1999,Lag:2002,Lag:2003,MH:1940} and references therein.  Such order conditions include linear repetitiveness, repulsiveness and power freeness.  Here, we introduce generalisations and extensions of these notions (\Cref{defn:repetitive,defn:repulsive,defn:free}) and show the exact impact these new notions have on the Diophantine properties of $\theta$, see \Cref{thm:thm1}.  This generalises and extends the well-known result \cite{BFMS:2002,D:2000,KS:2012,MH:1940} that the following are equivalent.
\begin{enumerate}[leftmargin=2.5em]
\item\label{item:orginal_ch_1} A Sturmian subshift is linearly repetitive.
\item\label{item:orginal_ch_2} A Sturmian subshift is repulsive.
\item\label{item:orginal_ch_3} A Sturmian subshift  is power free.
\item\label{item:orginal_ch_4} The continued fraction entries of $\theta$ are bounded.
\end{enumerate}
Such notions of complexity, and the associated implications on the Diophantine properties of $\theta$, correspond to properties of the dynamical system and hence of the $C^{*}$-algebras $C(X) \rtimes_{\sigma} \mathbb{Z}$ and $C(X)$.  Therefore, it is natural to consider spectral triples with these algebras and to compare how these can be used to classify  Sturmian subshifts in terms of  the Diophantine properties of $\theta$.  Indeed, we show precisely how our new notions are related to each other (\Cref{thm:thm1}) and to the sequentially H\"older regularity of the spectral metric (\Cref{thm:thm2,thm:thm2.5} and \Cref{cor:cor-main}).  Note, in \cite{KLS:2011,S:2015} the equivalence of \eqref{item:orginal_ch_2} and the Lipschitz equivalence of the ultra and spectral metric was generalised to minimal aperiodic subshifts over a finite alphabet and tilings.

Extending our results concerning the sequential H\"older regularity of the spectral metric and the new complexity concepts we introduce (\Cref{defn:repetitive,defn:repulsive,defn:free}) to suitable $S$-adic subshifts would be a worthwhile and fruitful venture.  Such suitable $S$-adic subshifts should consist of those which allow to describe their letter frequencies by a multidimensional continued fraction algorithm, for example Arnoux-Rauzy, Brun and Jacobi-Perron subshifts, see \cite{B:2014,B:2016} and references therein for further details.  That such an extension is feasible can in principle be seen in the work of \cite{KLS:2011}.  Further, we remark that a class of $S$-adic subshifts, referred to as (generalised) Grigorchuk subshifts, have been investigated in this context in a sequel to this article \cite{DKMSS:2017}.  The construction of these subshifts were inspired by Lysenok's \cite{L:85} presentation of the Grigorchuk group $G$ (the first known finitely generated group to exhibit intermediate growth) and have been shown to exhibit a rich variety of behaviours, see for instance \cite{DKMSS:2017,GLN:16,GLN:16b}.

\subsection{Outline}

In the following section, we present all of the necessary notations and definitions required to state our main results.  This section is broken down into three parts; definitions concerning continued fraction expansions (\Cref{sec:defCF}), definitions concerning Sturmain subshifts and aperiodic order (\Cref{sec:defAO}) and definitions concerning spectral metrics (\Cref{sec:defnSM}).  In \Cref{sec:main}, we present our main results; \Cref{thm:thm2,thm:thm2.5,thm:thm1,thm:thm3} and \Cref{cor:cor-main}.  We then give several preliminaries on Sturmian subshifts (\Cref{sec:Prelim_AO}) and spectral metrics (\Cref{sec:Pre_lim_Spectral_Metric}).  After which in \Cref{sec:1-repulsive=repulsive} we present the proofs of \Cref{prop:1-repulsive=repulsive,prop:Not_Metric,prop:liminf_psi_r}; \Cref{prop:1-repulsive=repulsive} demonstrate why our new notions of complexity are generalisations and extensions of existing forms of complexity; \Cref{prop:Not_Metric} gives a condition when the spectral metric is not a metric; and \Cref{prop:liminf_psi_r} justifies the limit superior in the definition of sequential H\"older regularity.  In \Cref{sec:proof_thm2+.5}, the proofs of \Cref{thm:thm2,thm:thm2.5} are given, in \Cref{sec:Proof_Thm_1} we present the proof of \Cref{thm:thm1} and we conclude with the proof of \Cref{thm:thm3} on the Hausdorff dimension of the set $\Theta_{\alpha}$ in \Cref{sec:proof_thm3}.

\section{Notation and Definitions}\label{sec:def}

\subsection{Continued fractions}\label{sec:defCF}

Here, we review the definition of continued fraction expansions and introduce the new concept of well-approximable of $\alpha$-type.

Let $\theta \in [0, 1]$ denote an irrational number.  For a natural number $n \geq 1$, set $a_{n} = a_{n}(\theta) \in \mathbb{N}$ to be the $n$-th continued fraction entry of $\theta$,  that is
\begin{align*}
\theta = [0; a_{1} , a_{2}, \dots ] \coloneqq  \cfrac{1}{a_{1} + \cfrac{1}{ a_{2} + \cdots}\;\cdot}
\end{align*}
We let 
$q_{0} = q_{0}(\theta) \coloneqq 1$,
$q_{1} = q_{1}(\theta) \coloneqq a_{1}$,
$p_{0} = p_{0}(\theta) \coloneqq 0$, and
$p_{1} = p_{1}(\theta) \coloneqq 1$
and for a given integer $n \geq 2$, we set
\begin{align*}
q_{n} = q_{n}(\theta) \coloneqq a_{n} q_{n-1} + q_{n-2}
\quad \text{and} \quad
p_{n} = p_{n}(\theta) \coloneqq a_{n} p_{n-1} + p_{n-2}.
\end{align*}
It is known that $\text{gcd}(p_{n}, q_{n}) = 1$ and that $p_{n}/q_{n} = [0; a_{1}, \dots, a_{n} ]$, for all $n \in \mathbb{N}$, see for instance \cite{DK:2002,K:1997}.  We observe that if $a_{1} = 1$, then
\begin{align}\label{eq:1-theta1}
\theta =  [0; 1, a_{2}, a_{3}, \dots ] > 1/2,
\quad
1 - \theta = [0; a_{2} + 1, a_{3}, \dots ]
\quad \text{and} \quad
q_{n}(\theta) = q_{n-1}(1-\theta).
\end{align}

\begin{definition}\label{defn:*}
For $\alpha \geq 1$ and an irrational number $\theta \in [0,1]$, set $\displaystyle \textup{A}_{\alpha}(\theta) \coloneqq \limsup_{n \to \infty} a_{n} q_{n-1}^{1-\alpha}$ and define
\begin{align*}
\underline{\Theta}_{\alpha} \coloneqq \{ \theta \in [0, 1] \colon 0 < A_{\alpha}(\theta) \},
\quad
\overline{\Theta}_{\alpha} \coloneqq \{ \theta \in [0, 1] \colon A_{\alpha}(\theta) < \infty \}
\quad \text{and} \quad
\Theta_{\alpha} \coloneqq \underline{\Theta}_{\alpha} \cap \overline{\Theta}_{\alpha}.
\end{align*}
Further, we say that $\theta$ is
\begin{enumerate}[leftmargin=2.5em]
\item \emph{well-approximable of $\overline{\alpha}$-type}, if $\textup{A}_{\alpha}(\theta) < \infty$,
\item \emph{well-approximable of $\underline{\alpha}$-type}, if $\textup{A}_{\alpha}(\theta) > 0$,
\item \emph{well-approximable of $\alpha$-type}, if $0 < \textup{A}_{\alpha}(\theta) < \infty$.
\end{enumerate}
\end{definition}

Notice, any irrational $\theta \in [0, 1]$ is well-approximable of $\underline{1}$-type. Further, the condition that an irrational $\theta \in [0, 1]$ is well-approximable of $\overline{1}$-type, and hence of ${1}$-type, is equivalent to the continued fraction entries of $\theta$ being bounded.

\begin{proposition}\label{prop:theta-1-theta}
For an irrational $\theta$, we have that
\begin{enumerate}[leftmargin=2.5em]
\item $\theta$ is well-approximable of $\overline{\alpha}$-type if and only if $1 - \theta$ is well-approximable of $\overline{\alpha}$-type, and
\item $\theta$ is well-approximable of $\underline{\alpha}$-type if and only if $1 - \theta$ is well-approximable of $\underline{\alpha}$-type.
\end{enumerate}
\end{proposition}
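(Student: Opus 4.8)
The plan is to reduce to the case $a_1(\theta) = 1$ and then read off the conclusion from the identities in \eqref{eq:1-theta1}. Since $\theta$ is irrational, exactly one of $\theta$ and $1 - \theta$ lies in the interval $(1/2, 1)$, and that number has first continued fraction entry equal to $1$; as both assertions of the proposition are symmetric under the involution $\theta \leftrightarrow 1 - \theta$, I would assume without loss of generality that $\theta = [0; 1, a_2, a_3, \dots]$ and set $\phi \coloneqq 1 - \theta$.

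By \eqref{eq:1-theta1} we then have $\phi = [0; a_2 + 1, a_3, a_4, \dots]$. Writing $b_1, b_2, \dots$ for the continued fraction entries of $\phi$, this gives $b_1 = a_2 + 1$ and $b_m = a_{m+1}(\theta)$ for all $m \geq 2$, while the last identity in \eqref{eq:1-theta1} gives $q_m(\phi) = q_{m+1}(\theta)$ for all $m \geq 0$. Substituting these relations, for every $n \geq 3$ one gets
\begin{align*}
a_n(\theta)\, q_{n-1}(\theta)^{1-\alpha} \;=\; b_{n-1}(\phi)\, q_{n-2}(\phi)^{1-\alpha},
\end{align*}
so the sequence $\bigl(a_n(\theta) q_{n-1}(\theta)^{1-\alpha}\bigr)_{n \geq 3}$ is, after the index shift $n \mapsto n - 1$, the sequence $\bigl(b_m(\phi) q_{m-1}(\phi)^{1-\alpha}\bigr)_{m \geq 2}$. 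A limit superior is insensitive to finitely many initial terms and to an index shift, hence $\textup{A}_\alpha(\theta) = \textup{A}_\alpha(\phi) = \textup{A}_\alpha(1 - \theta)$. Both claims then follow at once, since $\textup{A}_\alpha(\theta) < \infty$ if and only if $\textup{A}_\alpha(1-\theta) < \infty$, and $\textup{A}_\alpha(\theta) > 0$ if and only if $\textup{A}_\alpha(1-\theta) > 0$.

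There is no serious obstacle here: the only thing requiring attention is the bookkeeping with the index shifts, in particular verifying the entrywise correspondence between $(a_n)$ and $(b_m)$ and the relation $q_m(\phi) = q_{m+1}(\theta)$ — both of which are contained in \eqref{eq:1-theta1}, and in any case follow by a one-line induction from the recursion $q_n = a_n q_{n-1} + q_{n-2}$ once the initial terms are matched. I would treat the edge cases $n = 1, 2$ separately (they only affect the first two terms of the sequences and so are irrelevant to the limit superior), and I note that the argument in fact establishes the stronger statement $\textup{A}_\alpha(\theta) = \textup{A}_\alpha(1-\theta)$, from which both parts of the proposition are special cases.
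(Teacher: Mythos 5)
Your proposal is correct and follows exactly the route the paper intends: the paper's proof is the one-line remark that the claim ``is a consequence of \eqref{eq:1-theta1} and \Cref{defn:*}'', and your argument simply makes explicit the reduction to $a_{1}(\theta)=1$, the entrywise correspondence $b_{m}=a_{m+1}(\theta)$ with the shift $q_{m}(1-\theta)=q_{m+1}(\theta)$, and the resulting equality $\textup{A}_{\alpha}(\theta)=\textup{A}_{\alpha}(1-\theta)$. The bookkeeping is accurate, so nothing further is needed.
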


\begin{proof}
This is a consequence of \eqref{eq:1-theta1} and \Cref{defn:*}.
\end{proof}

\subsection{Sturmian subshifts and aperiodic order}\label{sec:defAO}

Here, we review the key definitions of subshifts and introduce three new concepts of complexity: $\alpha$-repetitive, $\alpha$-repulsive and $\alpha$-finite, for a given $\alpha \geq 1$.

For $n \in \mathbb{N}$ we define $\{ 0, 1 \}^{n}$ to be the set of all finite words in the alphabet $\{ 0, 1 \}$ of length $n$, and set
\begin{align*}
\{ 0, 1 \}^{*} \coloneqq \bigcup_{n \in \mathbb{N}_{0}} \{ 0, 1 \}^{n},
\end{align*}
where by convention $\{ 0, 1 \}^{0}$ is the set containing only the \emph{empty word} $\emptyset$.  We denote by $\{ 0, 1 \}^{\mathbb{N}}$ the set of all infinite words and equip it with the discrete product topology.
The continuous map $\sigma \colon \{ 0, 1 \}^{\mathbb{N}} \to \{ 0, 1 \}^{\mathbb{N}}$ defined by $\sigma( x_{1}, x_{2}, \dots ) \coloneqq ( x_{2}, x_{3}, \dots )$ is called the \emph{left-shift}.  A closed set $Y \subseteq\{ 0, 1 \}^{\mathbb{N}}$ which is left-shift invariant (that is $\sigma(Y) = Y$) is called a \emph{subshift}.  On every subshift $Y$ we can define a metric inducing the product topology: let $\delta = (\delta_{n})_{n \in \mathbb{N}}$ be a strictly decreasing null sequence of positive real numbers and define $d_{\delta} \colon Y \times Y\to \mathbb{R}$ via $d_{\delta}(v, w) \coloneqq \delta_{\lvert v \vee w \rvert}$.   Here, $\lvert v \vee w \rvert$ denotes the length of the longest prefix which $v$ and $w$ have in common, and if there is no such prefix, then we set $\lvert v \vee w \rvert = 1$.  (Recall that a finite word $u$ is called a \textit{prefix} of a finite or infinite word $v$ if there exists a word $u'$ such that $v = u u'$.  Similarly, a finite word $u$ is called a \textit{suffix} of a finite word $v$ if there exists a finite word $v'$ such that $v = v' u$.)

There are plenty of ways to introduce Sturmian subshifts. For example, they can be defined via a cut and project scheme \cite{BG:2013}, as extensions of circle rotations \cite{BFMS:2002}, using a substitution sequence \cite{MH:1940} or, as in the definition below, via so-called mechanical (infinite) words, also known as rotation sequences, see for instance \cite{BFMS:2002,Lot:2002}.

\begin{definition}\label{defn:sturmain_subshift} 
Let $\theta \in [0,1]$ be an irrational number and define the \emph{rotation sequence $x \coloneqq (x_{n})_{n \in \mathbb{N}}$ for $\theta$} by $x_{n} \coloneqq \lceil \theta(n+1) \rceil - \lceil \theta n \rceil$.  The set $\Omega(x) \coloneqq \overline{\{ \sigma^{k}(x_{1}, x_{2}, \dots) \colon k \in \mathbb{N}_{0} \}}$ is called the \emph{Sturmian subshift} of slope $\theta$.
\end{definition}

\begin{theorem}[\cite{BFMS:2002}]
A Sturmian subshift is aperiodic and minimal with respect to $\sigma$.
\end{theorem}

For $w = (w_{1}, w_{2}, \dots, w_{k})$ and $v = (v_{1}, v_{2}, \dots, v_{n}) \in \{ 0, 1\}^{*}$, we set $w v \coloneqq (w_{1}, w_{2}, \dots, w_{k}, v_{1}, v_{2}, \dots, v_{n})$, that is the \emph{concatenation} of $w$ and $v$.  Note that  $\{ 0, 1\} ^{*}$ together with the operation of concatenation defines a semigroup. The \emph{length} of $v$ is the integer $n$ and is denoted by $\lvert v \rvert$.  We set $v\lvert_{m} \coloneqq (v_{1}, v_{2}, \dots, v_{m})$ for all integers $0 \leq m \leq n = \lvert v \rvert$.  Further, we say that a word $u \in  \{ 0, 1\}^{*}$ is a \emph{factor} of $v$ if there exists an integer $j$ with $u = \sigma^{j}(v)\lvert_{\lvert u \rvert}$.  We use the same notations when $v$ is an infinite word.  The \emph{language} $\mathcal{L}(Y)$ of a subshift $Y$ is the set of all factors of the elements of $Y$.  Following convention, the empty word $\emptyset$ is assumed to be contained in the language $\mathcal{L}(Y)$.  We call $w \in \mathcal{L}(Y)$ a \emph{right special word} if both $w (0)$ and $w (1)$ belong to $\mathcal{L}(Y)$. We denote the set of right special words by $\mathcal{L}_{\textup{R}}(Y)$; following convention, we assume $\emptyset \in \mathcal{L}_{\textup{R}}(Y)$.

\begin{remark}\label{rmk:eta}
Let $\eta$ denote the involution on $ \{ 0, 1\}^{\mathbb{N}}$ given by  $\eta(w_{1}, w_{2}, \dots ) \coloneqq (e(w_{1}), e(w_{2}), \dots )$ with $e(0) \coloneqq 1$ and $e(1) \coloneqq 0$.  Let $x$ be the rotation sequence for $\theta$ and $y$ be the rotation sequence for $1-\theta$, then by a result of \cite{BFMS:2002}, it follows that $\Omega(x) = \Omega(\eta(y))$.
\end{remark}

\begin{remark}\label{rmk:BFMS-2002}
A known characterisation of (Epi-) Sturmian subshifts (over a finite alphabet) is that $\mathcal{L}(X)$ contains a unique right special word per length, see for instance \cite{BFMS:2002}.  Further, if $w \in \mathcal{L}_{\textup{R}}(X)$, then $\sigma^{k}(w)$ is a right special word for all $k \in \{ 1, 2, \dots, \lvert w \rvert \}$.
\end{remark}

\begin{definition}\label{defn:rep_fun}
The \emph{repetitive function} $R \colon \mathbb{N} \to \mathbb{N}$ of a subshift $Y$ assigns to $r$ the smallest $r'$ such that any element of $\mathcal{L}(Y)$ with length $r'$ contains (as factors) all elements of $\mathcal{L}(Y)$ with length $r$.
\end{definition}

\begin{theorem}[\cite{MH:1940}]\label{thm:MH-1940}
For $X$ a Sturmian subshift of slope $\theta \in [0,1]$, we have that
\begin{align*}
R(n) = \begin{cases}
R(n - 1) + 1 & \text{if} \; n \in \mathbb{N} \setminus \{ q_{k} \}_{k \in \mathbb{N}},\\
q_{k+1} + 2 q_{k} -1 & \text{if} \; n = q_{k} \; \text{for some} \; k \in \mathbb{N}.
\end{cases}
\end{align*}
\end{theorem}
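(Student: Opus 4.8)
I would translate the claim into a covering problem for an irrational rotation and then read off the value of $R$ from the classical three-distance theorem together with the best-approximation property of the convergents $p_{k}/q_{k}$. By \Cref{defn:sturmain_subshift}, the rotation sequence $x$ is a coding of the orbit of a point of $\mathbb{R}/\mathbb{Z}$ under the rotation $T$ by angle $\theta$, using a partition into two arcs. For a fixed $n$, the length-$n$ word $x_{j}x_{j+1}\cdots x_{j+n-1}$ depends only on which cell of a refined partition $P_{n}$ contains the $j$-th orbit point; here $P_{n}$ is the partition of $\mathbb{R}/\mathbb{Z}$ whose $n+1$ endpoints are $n+1$ consecutive points of a backward $T$-orbit, and since $\theta$ is irrational these endpoints are distinct, so $P_{n}$ has exactly $n+1$ cells, matching the factor count $n+1$ of a Sturmian subshift (\Cref{rmk:BFMS-2002}). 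Consequently a factor $w\in\mathcal{L}(X)$ of length $r'$ contains every length-$n$ factor of $X$ if and only if the corresponding block of $r'-n+1$ consecutive orbit points meets every cell of $P_{n}$. Writing $N_{n}$ for the least $\ell$ such that every block of $\ell$ consecutive $T$-orbit points meets every cell of $P_{n}$, this gives $R(n)=N_{n}+n-1$; and since a cell $A$ is missed by some block of length $\ell$ exactly when $\ell$ is strictly smaller than the maximal return time of $T$ to $A$, one has $N_{n}=\max_{A\in P_{n}}G(A)$, where $G(A)$ denotes the largest $T$-return time to $A$.

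The crux is to show $N_{n}=q_{k}+q_{k+1}$ whenever $q_{k}\le n<q_{k+1}$. By the three-distance theorem the cells of $P_{n}$ take at most three distinct lengths, the smallest of which is $\|q_{k}\theta\|$, the distance from $q_{k}\theta$ to the nearest integer; moreover the arc $I_{k}$ bounded by the two orbit points arising from the indices $0$ and $q_{k}$ (which are exactly $\|q_{k}\theta\|$ apart) is one such shortest cell, because the best-approximation property of $q_{k}$ together with $n<q_{k+1}$ forces that no orbit point of index in $\{1,\dots,n\}\setminus\{q_{k}\}$ lies between them; in particular $I_{k}$ is a cell of $P_{n}$ for \emph{every} $n$ in the block $[q_{k},q_{k+1})$. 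Since $I_{k}$ is delimited by orbit points, Slater's return-time form of the three-distance theorem describes the return times of $T$ to $I_{k}$ explicitly, the largest of them being $q_{k}+q_{k+1}$, so $N_{n}\ge q_{k}+q_{k+1}$ throughout the block. For the reverse inequality one verifies, again by a three-distance count on the cells of $P_{n}$, that no cell has a return gap exceeding $q_{k}+q_{k+1}$; the shortest cells realise the worst case, and every cell has length at least $\|q_{k}\theta\|>1/(q_{k+1}+q_{k})$. Hence $N_{n}=q_{k}+q_{k+1}$ and $R(n)=(q_{k}+q_{k+1})+n-1$ for $q_{k}\le n<q_{k+1}$.

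It then remains to read off the two cases. Taking $n=q_{k}$ yields $R(q_{k})=q_{k+1}+2q_{k}-1$; for $q_{k}<n<q_{k+1}$ the formula is ``$n$ plus a constant'', whence $R(n)-R(n-1)=1$; and every $n\in\mathbb{N}\setminus\{q_{k}\}_{k}$ lies strictly inside some block $[q_{k},q_{k+1})$, the small-index behaviour being handled by the conventions $q_{0}=1$, $p_{0}=0$ (so that $n=1$ falls under the second case) and, when $a_{1}=1$, by the coincidence $q_{0}=q_{1}$. I expect the second paragraph to be the main obstacle: making the three-distance and Slater analysis airtight, in particular confirming that $I_{k}$ --- rather than a medium-length cell created near a block boundary, or a short cell in the opposite orientation --- always realises the maximal return gap, and treating the two-gap and three-gap configurations of $P_{n}$ uniformly across a block. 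A purely combinatorial route would instead track the Rauzy graphs $\Gamma_{n}$ of $X$: each is a ``theta-shaped'' graph with a single right-special and a single left-special vertex, its shape is constant for $n$ inside a block and mutates through a bispecial factor exactly at the block ends $n=q_{k+1}$, and $R(n)-n+1$ is the length of the longest vertex-exhausting walk forced by the Sturmian path; the obstacle there is the explicit description of $\Gamma_{n}$ and of that walk, which again reduces to the continued-fraction data of $\theta$.
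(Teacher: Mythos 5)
The paper does not actually prove this statement: it is imported verbatim from Morse--Hedlund \cite{MH:1940}, so there is no in-paper argument to compare against. Judged on its own terms, your plan is the standard rotation-theoretic proof and is correct in all essentials. The reduction $R(n)=N_{n}+n-1$ with $N_{n}=\max_{A\in P_{n}}G(A)$ is sound (one small point: to see that a block of length $G(A)$ must meet $A$ you should work with the \emph{two-sided} orbit, so that there is always a ``previous visit'' to anchor the argument), and the identification $N_{n}=q_{k}+q_{k+1}$ for $q_{k}\le n<q_{k+1}$ is exactly what makes the stated recursion come out: $R(q_{k})=q_{k+1}+2q_{k}-1$ and $R(n)-R(n-1)=1$ strictly inside a block. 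The steps you defer do close. The cell $I_{k}$ of length $\lVert q_{k}\theta\rVert$ persists throughout the block by the best-approximation property; Slater's theorem gives its return times inside $\{q_{k+1}-q_{k},\,q_{k+1},\,q_{k}+q_{k+1}\}$ with the value $q_{k}+q_{k+1}$ attained; and the matching upper bound follows because the $q_{k}+q_{k+1}$ points $\{j\theta\}$, $0\le j<q_{k}+q_{k+1}$, cut the circle into arcs of only the two lengths $\lVert q_{k}\theta\rVert$ and $\lVert q_{k+1}\theta\rVert$ (consistent with $q_{k+1}\lVert q_{k}\theta\rVert+q_{k}\lVert q_{k+1}\theta\rVert=1$), so every cell of $P_{n}$, having length at least $\lVert q_{k}\theta\rVert$, is revisited within $q_{k}+q_{k+1}$ steps; the only delicate case is a cell of length exactly $\lVert q_{k}\theta\rVert$, which your explicit $I_{k}$ computation already covers. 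One last remark: your closed form $R(n)=q_{k}+q_{k+1}+n-1$ for $q_{k}\le n<q_{k+1}$ (allowing $k=0$) is the honest statement; as printed, \Cref{thm:MH-1940} restricts the second case to $k\in\mathbb{N}$ and therefore leaves $R(1)$ undetermined when $a_{1}\ge 2$ --- a defect of the statement rather than of your proof.
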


\begin{definition}\label{defn:repetitive}
Let $\alpha \geq 1$ be given and set
\begin{align*}
R_{\alpha} \coloneqq \limsup_{n \to \infty} \frac{R(n)}{n^{\alpha}}.
\end{align*}
A subshift $Y$ is called \emph{$\alpha$-repetitive} if $R_{\alpha}$ is finite and non-zero.
\end{definition}

The notion of $1$-repetitive implies linearly recurrent and in the case of a Sturmian subshift these notions coincide.  Also, if $1 \leq \alpha < \beta$ and $0 < R_{\beta} < \infty$, then $R_{\alpha} = \infty$.  Similarly, if $0 < R_{\alpha} < \infty$, then $R_{\beta} = 0$.

After the completion of this article the authors learnt that the term $\alpha$-repetitive has been used before, see for instance \cite{LNCS4649}.  However, the definition given above and that given in \cite{LNCS4649}, although related, record different information.  We refrain from giving the precise definition of \cite{LNCS4649} here as we believe it would not provide further inside to the reader.

\begin{definition}\label{defn:repulsive}
For $\alpha \geq 1$ and for a subshift $Y$, set $\displaystyle \ell_{\alpha} \coloneqq \liminf_{n \to \infty} A_{\alpha, n}$ where for $n \geq 2$ an integer
\begin{align*}
A_{\alpha, n} \coloneqq \inf \left\{ \frac{\lvert W \rvert - \lvert w \rvert}{\lvert w \rvert^{1/\alpha}} \colon w, W \in \mathcal{L}(Y), w \; \text{is a prefix and suffix of} \; W, \; \lvert W \rvert = n \; \text{and} \; W \neq w \neq \emptyset \right\}.
\end{align*}
If $\ell_{\alpha}$ is finite and non-zero, then we say that $Y$ is \emph{$\alpha$-repulsive}.
\end{definition}

We recall that a subshift $Y$ is called \emph{repulsive} if the value
\begin{align*}
\ell \coloneqq \inf \left\{ \frac{\lvert W \rvert - \lvert w \rvert}{\lvert w \rvert} \colon w, W \in \mathcal{L}(Y), w \; \text{is a prefix and suffix of} \; W, \; \text{and} \; W \neq w \neq \emptyset \right\}
\end{align*}
is non-zero.  The following proposition, which is proven in \Cref{sec:1-repulsive=repulsive}, relates the notions $1$-repulsive and repulsive.  In fact, in a sequel to this article \cite{DKMSS:2017}, this result is shown to hold for general subshifts over finite alphabets.  In particular, it was shown in \cite{KLS:2011} that power free and repulsive are equivalent  and, as we will shortly see, we have that power free and $1$-finite (\Cref{defn:free}) are equivalent.  The general result follows, by combining these observations with Theorem 3.1 of \cite{DKMSS:2017} where it is shown that $1$-repulsive and $1$-finite are equivalent.

\begin{proposition}\label{prop:1-repulsive=repulsive}
A Sturmian subshift is $1$-repulsive if and only if it is repulsive.
\end{proposition}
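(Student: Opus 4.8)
The plan is to split the statement into three elementary parts, only the last of which uses anything specific about Sturmian words. Write $\ell$ and $\ell_{1}$ for the quantities appearing in the definitions of \emph{repulsive} and \emph{$1$-repulsive}, so that a subshift is repulsive iff $\ell>0$ and $1$-repulsive iff $0<\ell_{1}<\infty$. First I would record the trivial inequality $\ell\le\ell_{1}$: for each fixed $n$ the pairs $(w,W)$ counted by $A_{1,n}$ form a subset of those counted by $\ell$, so $A_{1,n}\ge\ell$ for every $n$ and hence $\ell_{1}=\liminf_{n}A_{1,n}\ge\ell$. In particular, if the subshift is repulsive, then $\ell_{1}>0$.

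For the reverse implication I would prove the contrapositive $\ell=0\Rightarrow\ell_{1}=0$, which again holds for an arbitrary subshift. Choose pairs $w_{j},W_{j}\in\mathcal{L}(Y)$ with $w_{j}$ a prefix and suffix of $W_{j}$, $\emptyset\neq w_{j}\neq W_{j}$, and $(\lvert W_{j}\rvert-\lvert w_{j}\rvert)/\lvert w_{j}\rvert\to 0$. Since $\lvert W_{j}\rvert-\lvert w_{j}\rvert\ge 1$, the ratios tending to $0$ force $\lvert w_{j}\rvert\to\infty$ and therefore $\lvert W_{j}\rvert\to\infty$; as $A_{1,\lvert W_{j}\rvert}\le(\lvert W_{j}\rvert-\lvert w_{j}\rvert)/\lvert w_{j}\rvert$ and the integers $\lvert W_{j}\rvert$ are unbounded, we obtain $\liminf_{n}A_{1,n}=0$. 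Combined with the previous paragraph this already gives $\ell>0\iff\ell_{1}>0$ for every subshift.

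It therefore only remains to show that for a Sturmian subshift $\ell_{1}$ is automatically finite; this is the one place the combinatorics of Sturmian words enters, and it is the crux. I would use the standard words $u_{k}$ of the slope (see \Cref{sec:Prelim_AO}), recalling that $\lvert u_{k}\rvert=q_{k}$, that $u_{k+1}=u_{k}^{a_{k+1}}u_{k-1}$ (so $u_{k}$ is a prefix of $u_{k+1}$), and that $u_{k+1}u_{k}$ is a prefix of $u_{k+2}$, hence $u_{k+1}u_{k}\in\mathcal{L}(X)$ for every $k$. If $a_{k+1}\ge 2$, then $u_{k}u_{k}$ is a prefix of $u_{k+1}$, so $u_{k}u_{k}\in\mathcal{L}(X)$, and $w=u_{k}$ is a prefix and suffix of $W=u_{k}u_{k}$, whence $A_{1,2q_{k}}\le(2q_{k}-q_{k})/q_{k}=1$. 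If $a_{k+1}=1$, then $u_{k+1}u_{k}=u_{k}u_{k-1}u_{k}\in\mathcal{L}(X)$ and again $w=u_{k}$ is a prefix and suffix of $W=u_{k+1}u_{k}$, whence $A_{1,q_{k+1}+q_{k}}\le q_{k+1}/q_{k}=1+q_{k-1}/q_{k}<2$. In either case, for every $k$ there is an integer $n_{k}\ge 2q_{k}$ with $A_{1,n_{k}}\le 2$, and since $n_{k}\to\infty$ this gives $\ell_{1}=\liminf_{n}A_{1,n}\le 2<\infty$.

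Putting the pieces together: for a Sturmian subshift $\ell_{1}<\infty$ always, so being $1$-repulsive is equivalent to $\ell_{1}>0$, and by the first two paragraphs this is equivalent to $\ell>0$, i.e.\ to being repulsive. The only genuine difficulty is the third paragraph, namely exhibiting for each $k$ a factor with a sufficiently long border; the case distinction on $a_{k+1}$ seems unavoidable, since $u_{k}u_{k}$ need not be a factor when $a_{k+1}=1$ (for instance in the Fibonacci word), but in that regime $q_{k+1}/q_{k}$ is bounded and the factor $u_{k+1}u_{k}$ serves instead.
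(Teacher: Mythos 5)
Your proof is correct, and it is organised genuinely differently from the paper's. You isolate the general fact that $\ell>0$ if and only if $\ell_{1}>0$ for an \emph{arbitrary} subshift --- the inequality $\ell\le\ell_{1}$ in one direction and, in the other, the observation that pairs witnessing $\ell=0$ must have $\lvert w\rvert\to\infty$, hence $\lvert W\rvert\to\infty$, so they already force $\liminf_{n}A_{1,n}=0$ --- and then confine the Sturmian combinatorics to the single statement $\ell_{1}<\infty$, proved with standard words. The paper instead routes both implications through the classical equivalence between repulsiveness and boundedness of the partial quotients: for the forward implication it deduces unboundedness of the $a_{k}$ and uses the words $\mathcal{L}_{k}^{a_{2k}}$, $\mathcal{L}_{k}^{a_{2k}-1}$ from \Cref{prop:blackboard} to get $A_{1,a_{k}q_{k-1}}\le(a_{k}-1)^{-1}\to 0$; for finiteness it distinguishes the global cases ``$a_{k}\ne 1$ infinitely often'' and ``$a_{k}$ eventually $1$'', obtaining the bounds $1$ and $1+\lim_{n}q_{n+1}/q_{n}$ respectively. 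Your step buys independence from that external characterisation and is precisely the part that extends to arbitrary subshifts (in the spirit of the sequel \cite{DKMSS:2017} cited after the proposition), and your finiteness argument gives the uniform bound $\ell_{1}\le 2$ with a per-index rather than global case distinction. Two cosmetic caveats, neither affecting correctness: the recursion $u_{k+1}=u_{k}^{a_{k+1}}u_{k-1}$ is the standard-word convention rather than the one actually set up in \Cref{sec:Prelim_AO} (there the repeated block sits on the right, as in $\mathcal{R}_{k}=\mathcal{R}_{k-1}\mathcal{L}_{k}\cdots\mathcal{L}_{k}$), so you are importing a standard fact rather than quoting the paper; and the parenthetical claim that $u_{k}u_{k}$ fails to be a factor of the Fibonacci word is false --- a Sturmian word always contains $u_{k}^{2}$ as a factor, e.g.\ by the index formula of \cite{DL:2002} --- though your argument never uses this, only the true statement that $u_{k}u_{k}$ need not be a \emph{prefix} of $u_{k+1}$ when $a_{k+1}=1$.
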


\begin{remark}\label{rmk:alpha-beta-repulsive}
If $1 \leq \alpha < \beta$ and if $0 < \ell_{\beta} < \infty$, then $\ell_{\alpha} = 0$.  To see this, suppose that $0 < \ell_{\beta} < \infty$.  Thus, for $n \in \mathbb{N}$ sufficiently large, there exist words $w, W \in \mathcal{L}(Y)$ with $w$ a prefix and suffix of $W$, $\lvert W \rvert = n$ and $W \neq w \neq \emptyset$, so that 
\begin{align*}
\frac{\ell_{\beta}}{2} \leq \frac{\lvert W \rvert - \lvert w \rvert}{\lvert w \rvert^{1/\beta}} \leq 2 \ell_{\beta}.
\end{align*}
Hence, $\lvert w \rvert \geq n (2 \ell_{\beta} + 1)^{-1}$, and
\begin{align*}
\frac{\ell_{\beta}  \lvert w \rvert^{1/\beta-1/\alpha}}{2} \leq \frac{\lvert W \rvert - \lvert w \rvert}{\lvert w \rvert^{1/\alpha}} \leq 2 \ell_{\beta} \lvert w \rvert^{1/\beta-1/\alpha}.
\end{align*}
Therefore, we have that $\ell_{\alpha} = 0$.
\end{remark}

The next definition is a generalisation of the notion of a subshift being power free.  Indeed, one sees that if $\alpha = 1$, then $1$-finite is equivalent to the (asymptotic) index being finite, which in turn is equivalent to the property of being power free. For further details on the index of Sturmian subshifts, see for instance \cite{A:2005,DL:2002}.

\begin{definition}\label{defn:free}
For a subshift $Y$ we define $Q \colon \mathbb{N} \to \mathbb{N} \cup \{ + \infty \}$ by
\begin{align*}
Q(n) \coloneqq \sup \{ p \in \mathbb{N} \colon \text{there exists} \; W \in \mathcal{L}(Y) \; \text{with} \; \lvert W \rvert = n \; \text{and} \; W^{p} \in \mathcal{L}(Y) \}.
\end{align*}
Let $\alpha \geq 1$ be given.  We say that the subshift $Y$ is \emph{$\alpha$-finite} if the value
\begin{align*}
Q_{\alpha} \coloneqq \limsup_{n \to \infty} \frac{Q(n)}{n^{\alpha - 1}}
\end{align*}
is non-zero and finite.
\end{definition}

\subsection{Spectral metric}\label{sec:defnSM}

Here, we give the definition of a spectral metric as introduced in \cite{KS:2012}; we also define sequential H\"older regularity of metrics.

As is often the case, once one is lead to consider certain objects by an abstract theory (here spectral triples) and these objects turn out to be useful in another field (here aperiodic order) one finds out that they can also be defined \emph{ad hoc}, that is, without any knowledge of the abstract theory.  This is the case here and so we present a combinatorial version of the spectral metric as introduced in \cite{KS:2012} and refer to \cite{KLS:2011,KS:2012} for the definition of the spectral triple used to defined the spectral metric.

Let $X$ denote a Sturmian subshift and let $\delta = (\delta_{n})_{n \in \mathbb{N}}$ denote a strictly decreasing null sequence of positive real numbers.  The spectral metric $d_{s, \delta} \colon X \times X \to \mathbb{R}$ is defined by
\begin{align}\label{eq:spectral_metric_bound}
d_{s, \delta}(v, w) \coloneqq \delta_{\lvert v \vee w \rvert} + \sum_{n > \lvert v \vee w \rvert} \overline{b}_{n}(v)  \delta_{n} + \sum_{n > \lvert v \vee w \rvert} \overline{b}_{n}(w) \delta_{n},
\end{align}
for all $v, w \in X$.  Here, for $n \in \mathbb{N}$ and $z = (z_{1}, z_{2}, \dots ) \in X$, we set 
\begin{align*}
\overline{b}_{n}(z) \coloneqq 
\begin{cases}
1 & \text{if} \; (z_{1}, z_{2}, \dots, z_{n}) \; \text{is a right special word},\\
0 & \text{otherwise}.
\end{cases}
\end{align*}
Setting $\delta_{n} = n^{-t}$, for $n \in \mathbb{N}$, the following result gives a necessary condition for when the spectral metric $d_{s, \delta}$ is not bounded; complementing \cite[Theorem 4.14]{KS:2012}.  The proof is presented in \Cref{sec:1-repulsive=repulsive}.

\begin{proposition}\label{prop:Not_Metric}
Let $\alpha > 1$ and let $X$ be a Sturmian subshift of slope $\theta \in \underline{\Theta}_{\alpha}$.  For $t \in (0, 1 - 1/\alpha]$, setting $\delta_{n} \coloneqq n^{-t}$, for $n \in \mathbb{N}$, the spectral metric $d_{s, \delta}$ is not a metric and for $t > 1 - 1/\alpha$, the spectral metric $d_{s, \delta}$ is a metric.
\end{proposition}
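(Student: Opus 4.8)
The plan is to reduce the statement to a finiteness dichotomy for the quantities
\[
\Sigma_z \;\coloneqq\; \sum_{n\ge 1}\overline{b}_n(z)\,n^{-t}\,,\qquad z\in X ,
\]
where, by definition of $\overline{b}_n$, the term $\overline{b}_n(z)$ equals $1$ precisely when $(z_1,\dots,z_n)$ is right special. If $\Sigma_z=+\infty$ for some $z\in X$, then for any $w\in X\setminus\{z\}$ one has $m\coloneqq\lvert z\vee w\rvert<\infty$ and $d_{s,\delta}(z,w)\ge\sum_{n>m}\overline{b}_n(z)\,n^{-t}=+\infty$, so $d_{s,\delta}$ fails to be real-valued and hence is not a metric. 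Conversely, if $\sup_{z\in X}\Sigma_z<\infty$, then $d_{s,\delta}$ is real-valued and bounded by $\delta_1+2\sup_z\Sigma_z$, it is symmetric, it separates points (since $d_{s,\delta}(z,w)\ge\delta_{\lvert z\vee w\rvert}>0$ for $z\neq w$ and $d_{s,\delta}(z,z)=0$), and the triangle inequality is an elementary computation using that $\overline{b}_n(v)=\overline{b}_n(w)$ whenever $v$ and $w$ agree on their first $n$ letters (cf.\ \cite{KS:2012}). So I would aim to show $\Sigma_z=+\infty$ for some $z$ when $0<t\le 1-1/\alpha$, using only $A_\alpha(\theta)>0$; and $\sup_z\Sigma_z<\infty$ when $t>1-1/\alpha$, for which I would use in addition that $A_\alpha(\theta)<\infty$ (so the relevant case is $0<A_\alpha(\theta)<\infty$, i.e.\ $\theta$ well-approximable of $\alpha$-type).

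Next I would assemble the combinatorial input. Let $\varrho_n$ denote the unique right special word of length $n$ (\Cref{rmk:BFMS-2002}), and let $s_k$ be the $k$-th standard word of slope $\theta$, so that $\lvert s_k\rvert=q_k$, $s_k=s_{k-1}^{a_k}s_{k-2}$, the word $s_{k-2}$ is a prefix of $s_{k-1}$, and $\varrho_{q_k}=\widetilde{s_k}$, the reversal of $s_k$ (see \cite{BFMS:2002} or the preliminaries on Sturmian subshifts). Since, for $0\le j\le a_k$, the prefix of $s_k$ of length $q_{k-2}+jq_{k-1}$ equals $s_{k-1}^{j}s_{k-2}$, one gets $\varrho_{q_{k-2}+jq_{k-1}}=\widetilde{s_{k-2}}\,(\widetilde{s_{k-1}})^{j}$, which is a prefix of $\varrho_{q_k}=\widetilde{s_{k-2}}\,(\widetilde{s_{k-1}})^{a_k}$; consequently, whenever $z\lvert_{q_k}=\varrho_{q_k}$, all of the integers $q_{k-2}+jq_{k-1}$ with $0\le j\le a_k$ are right special prefix lengths of $z$. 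For the upper-bound direction I would additionally need the \emph{converse} structural fact that, for \emph{every} $z\in X$, the right special prefix lengths of $z$ lying in the scale $[q_{k-1},q_k)$ are confined to at most $a_k+\mathcal{O}(1)$ integers that are pairwise $q_{k-1}$-separated; this should be extracted from the period (equivalently, border) structure of the prefixes of the characteristic word of slope $\theta$. Combined with $\sum_{j=1}^{M}(jq_{k-1})^{-t}\le C(t)\,q_{k-1}^{-t}M^{1-t}$ for $t<1$, this yields the per-scale estimate $\sum_{q_{k-1}\le n<q_k}\overline{b}_n(z)\,n^{-t}\le C'(t)\,q_{k-1}^{-t}\,a_k^{1-t}$.

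For the assertion that $d_{s,\delta}$ is not a metric when $0<t\le 1-1/\alpha$: since $A_\alpha(\theta)>0$ there are $c>0$ and infinitely many $k$ with $a_k>c\,q_{k-1}^{\alpha-1}$, and after passing to a subsequence I may assume all such $k$ are even (the odd case is symmetric). As $s_{2i}$ is a suffix of $s_{2i+2}=s_{2i+1}^{a_{2i+2}}s_{2i}$, the word $\varrho_{q_{2i}}$ is a prefix of $\varrho_{q_{2i+2}}$, so the cylinders $[\varrho_{q_{2i}}]$ are non-empty (by minimality of $X$), closed and nested; I pick $z$ in their intersection, so $z\lvert_{q_{2i}}=\varrho_{q_{2i}}$ for all $i$. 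Then for each even $k$ of the subsequence, using $q_{k-2}\le q_{k-1}$ and $0<t<1$,
\begin{align*}
\sum_{j=1}^{a_k}\big(q_{k-2}+jq_{k-1}\big)^{-t}
\;&\ge\;(2q_{k-1})^{-t}\sum_{j=1}^{a_k}j^{-t}
\;\ge\;(2q_{k-1})^{-t}a_k^{1-t}\\
\;&\ge\;2^{-t}c^{\,1-t}\,q_{k-1}^{\,\alpha(1-t)-1}
\;\ge\;2^{-t}c^{\,1-t},
\end{align*}
the last step because $q_{k-1}\ge 1$ and $\alpha(1-t)-1\ge 0$ exactly when $t\le 1-1/\alpha$. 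Since these terms of $\Sigma_z$ range over the pairwise disjoint intervals $[q_{k-1},q_k]$ for the infinitely many $k$ in the subsequence, $\Sigma_z=+\infty$, and $d_{s,\delta}$ is not a metric.

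For the assertion that $d_{s,\delta}$ is a metric when $t>1-1/\alpha$: from $A_\alpha(\theta)<\infty$ there is $C_1$ with $a_k\le C_1 q_{k-1}^{\alpha-1}$ for all large $k$, and summing the per-scale estimate over $k$ (absorbing the finitely many small $n$ into a constant) gives
\[
\sup_{z\in X}\Sigma_z\;\le\;C_2+C_3\sum_{k}q_{k-1}^{\,\alpha(1-t)-1}\,.
\]
Because $t>1-1/\alpha$ makes the exponent $\alpha(1-t)-1$ negative while $q_k$ grows at least geometrically in $k$, the series converges; hence $\sup_z\Sigma_z<\infty$ and, by the reduction above, $d_{s,\delta}$ is a bounded metric. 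I expect the main obstacle to be precisely the converse combinatorial input used here: the prefix relations among standard words hand one the ``renormalisation'' right special prefixes for free (and this is what drives the first assertion), but bounding the right special prefixes of an \emph{arbitrary} point of $X$ scale by scale is genuinely delicate, and requires a careful description of the periods of the prefixes of the characteristic Sturmian word of slope $\theta$.
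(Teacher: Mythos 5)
Your overall strategy is the same as the paper's: reduce everything to the convergence or divergence of $\sum_{n}\overline{b}_{n}(z)\,n^{-t}$, prove divergence for $t\le 1-1/\alpha$ by evaluating this sum along the distinguished point whose prefixes are the right special words $\mathcal{R}_{k}$ (resp.\ $\mathcal{L}_{k}$), and prove convergence for $t>1-1/\alpha$ by controlling the right special prefix lengths of an arbitrary point scale by scale. Your divergence half is complete and correct; it is in substance the paper's computation via \eqref{eq:xyL}, where the admissible indices $jq_{2k-1}+q_{2k-2}$ come from \Cref{cor:branching_points}, and your lower bound $2^{-t}c^{1-t}q_{k-1}^{\alpha(1-t)-1}\ge 2^{-t}c^{1-t}$ for $t\le 1-1/\alpha$ matches theirs. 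You are also right that the metric direction genuinely needs $A_{\alpha}(\theta)<\infty$ in addition to $\theta\in\underline{\Theta}_{\alpha}$; the paper uses this implicitly by invoking \Cref{prop:lower_Holder_reg}.

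The one real gap is the step you yourself flag: the ``converse structural fact'' that for \emph{every} $z\in X$ the right special prefix lengths in $[q_{k-1},q_{k})$ are at most $a_{k}+\mathcal{O}(1)$ in number and pairwise $q_{k-1}$-separated. You assert this should follow from the border structure of prefixes of the characteristic word but do not prove it, and without it the convergence half does not close. This is exactly what the paper supplies, and the mechanism is less delicate than you fear: by \Cref{rmk:BFMS-2002} there is a \emph{unique} right special word per length, so if $(z_{1},\dots,z_{n})$ is right special it must equal the unique right special word $\varrho_{n}$, which by \Cref{cor:branching_points} is of the form $\mathcal{R}_{l}\mathcal{L}_{l+1}\cdots\mathcal{L}_{l+1}$ or $\mathcal{L}_{l'}\mathcal{R}_{l'}\cdots\mathcal{R}_{l'}$; the next right special prefix length of $z$ is then forced (depending on the letter $z_{n+1}$) to be at least $q_{2l+1}$, $q_{2l'}$, $q_{2l+3}$ or $q_{2l'+2}$ further along --- this is \eqref{eq:n-m} in the proof of \Cref{prop:lower_Holder_reg} --- and consequently the tail $\sum_{k\ge n}\overline{b}_{k}(z)\delta_{k}$ is dominated by the corresponding tails along the two distinguished points $x$ and $y$, see \eqref{eq:sum_key}. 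That comparison reduces the arbitrary-point bound to the explicit double sums you already know how to estimate, and is the lemma you would need to write out to complete your argument.
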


To define sequentially H\"older regularity we set for $w\in X$ and $r > 0$,
 \begin{align*}
\psi_{w}(r) \coloneqq \limsup_{v \, \xrightarrow[d_\delta]{} \, w} \frac{d_{s, \delta}(w, v)}{d_{\delta}(w, v)^{r}} \qquad \mbox{ and } \qquad \psi(r) \coloneqq \sup \{ \psi_{w}(r) \colon w \in X \}.
\end{align*} 
For all $r \in (0, 1)$, we observe that by replacing limit superior with limit inferior in the definition of $\psi_{w}(r)$, then $\psi(r) = 0$, compare with \Cref{thm:thm2,thm:thm2.5}.  The proof is presented in \Cref{sec:1-repulsive=repulsive}.

\begin{proposition}\label{prop:liminf_psi_r}
For $\alpha>1$ and $r \in (0, 1)$, we have that $\displaystyle{\liminf_{v \, \xrightarrow[d_\delta]{} \, w} \frac{d_{s, \delta}(w, v)}{d_{\delta}(w, v)^{r}} =0}$.
\end{proposition}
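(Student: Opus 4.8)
The plan is to fix $w \in X$ and construct an explicit sequence $v_{j} \to w$ (in $d_{\delta}$) along which the quotient tends to $0$. Set $m \coloneqq \lvert v \vee w\rvert$, so $d_{\delta}(w,v) = \delta_{m}$, and by \eqref{eq:spectral_metric_bound}
\begin{align*}
\frac{d_{s,\delta}(w,v)}{d_{\delta}(w,v)^{r}} \;=\; \delta_{m}^{1-r} \;+\; \frac{T_{w}(m) + T_{v}(m)}{\delta_{m}^{r}}, \qquad \text{where } T_{z}(m) \coloneqq \sum_{n > m}\overline{b}_{n}(z)\,\delta_{n}.
\end{align*}
As $\delta_{n} = n^{-t}$ and $r < 1$, we have $\delta_{m}^{1-r} = m^{-t(1-r)} \to 0$ when $m \to \infty$, so it suffices to produce $v_{j}$ with $m_{j} \coloneqq \lvert v_{j}\vee w\rvert \to \infty$ and $T_{w}(m_{j}) + T_{v_{j}}(m_{j}) = o(\delta_{m_{j}}^{r})$.

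The first step is to locate the competing points. If $v \neq w$ and $\lvert v\vee w\rvert = m$, then $(w_{1},\dots,w_{m})$ has the two distinct right extensions $w_{m+1}$ and $v_{m+1}$ in $\mathcal{L}(X)$, so it is right special, i.e.\ $\overline{b}_{m}(w)=1$; conversely, $X$ being a Cantor set, $w$ is a limit of points $v \neq w$, which branch off $w$ at levels tending to infinity. Hence $S_{w} \coloneqq \{n \colon \overline{b}_{n}(w)=1\}$ is infinite, and for each $m \in S_{w}$ the cylinder on $(w_{1},\dots,w_{m},1-w_{m+1})$ is a nonempty subset of $X$ consisting precisely of the $v$ with $\lvert v\vee w\rvert=m$. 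We will let $m_{j}$ run through a suitable infinite subset of $S_{w}$ and take $v_{j}$ to be any point of the corresponding cylinder.

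Controlling $T_{v}(m)$ is comparatively easy. By \Cref{rmk:BFMS-2002} the right special word of length $n$ is obtained from that of length $n-1$ by prepending a letter, so these words are the reversals of the nested prefixes of one infinite word, whose prefix periods lie among the denominators $q_{k}$. If $\overline{b}_{n}(v)=1$ for some $n>m$, then, since $v\lvert_{m}=w\lvert_{m}$ is the length-$m$ right special word (as $m \in S_{w}$), the length-$n$ right special word must have period $n-m$, forcing $n-m$ to be some $q_{k}$ with $q_{k+1}\ge m$. Because the $q_{k}$ grow at least geometrically and $q_{k_{0}+1}\ge m$ for the least admissible $k_{0}$, this yields $T_{v}(m) \le \sum_{k\ge k_{0}}\delta_{m+q_{k}} \le C\,\delta_{m}$ with $C$ independent of $m$ and of the choice of $v$ in the cylinder.

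The heart of the proof is the bound on $T_{w}(m) = \sum_{n\in S_{w},\,n>m}\delta_{n}$: one needs infinitely many $m \in S_{w}$ with $T_{w}(m) = O(\delta_{m})$, which is stronger than the required $o(\delta_{m}^{r})$. This rests on the \emph{sparseness} of $S_{w}$. If $n<n'$ both lie in $S_{w}$, then the length-$n$ right special word occurs in the length-$n'$ one both as a prefix (both are prefixes of $w$) and as a suffix (by the prepending structure), i.e.\ as a border; hence \Cref{defn:repulsive} gives $A_{\alpha,n'} \le (n'-n)\,n^{-1/\alpha}$, and since the standing Diophantine hypotheses on $\theta$ (under which $d_{s,\delta}$ is a metric, cf.\ \Cref{prop:Not_Metric} and \Cref{thm:thm1}) keep $\liminf_{n}A_{\alpha,n}$ bounded away from $0$, consecutive elements of $S_{w}$ eventually grow at least like $n \mapsto n + c\,n^{1/\alpha}$. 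Moreover, two long arithmetic progressions inside $S_{w}$ with incompatible common differences $q_{k}\neq q_{k'}$ are ruled out by the Fine--Wilf theorem together with the block structure of the Sturmian standard words, so $S_{w}$ is locally dense only at sparsely many scales; taking $m$ at the end of such a dense block, or otherwise at a generic element of $S_{w}$, and using the bound on the partial quotients of $\theta$ in terms of powers of the $q_{k}$, one obtains $T_{w}(m) \le C\,\delta_{m}$ along an infinite subsequence of $S_{w}$. Substituting the two tail estimates into the displayed identity gives $d_{s,\delta}(w,v_{j})/d_{\delta}(w,v_{j})^{r} \le (1+2C)\,\delta_{m_{j}}^{1-r} \to 0$, as required. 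I expect the main obstacle to be exactly this last estimate: for slopes with unbounded continued fraction entries $S_{w}$ need not be geometrically sparse, and comparing $\sum_{n\in S_{w},\,n>m}\delta_{n}$ with $\delta_{m}^{r}$ forces one to combine the border estimate for $A_{\alpha,n}$, a Fine--Wilf argument, and a careful choice of $m$.
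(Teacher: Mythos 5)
Your overall strategy --- branch off $w$ at a well-chosen element $m$ of $S_{w}=\{n\colon \overline{b}_{n}(w)=1\}$ and show that both tail sums are $o(\delta_{m}^{r})$ --- is the right one, and it is essentially what the paper does for the concrete pair $w=x$, $v=\sigma^{\lvert\mathcal{L}_{n+1}\rvert}(y)$, for which $m=q_{2n+2}$ is exactly the ``end of a block''. However, two of your steps do not hold up. First, the ``comparatively easy'' bound $T_{v}(m)\leq C\,\delta_{m}$, uniformly over $v$ in the cylinder, is false, because the conclusion that $n-m$ must equal some $q_{k}$ is wrong: the right special word of length $n$ with border $u_{m}$ need only have period $n-m$, and by \Cref{cor:branching_points} the admissible values of $n-m$ form arithmetic progressions, not a geometric sequence. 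Concretely, take $w=\sigma^{\lvert\mathcal{L}_{k}\rvert}(y)$ and $m=q_{2k}\in S_{w}$; then $x$ lies in the corresponding cylinder and $\overline{b}_{n}(x)=1$ for every $n=jq_{2k+1}+q_{2k}$ with $1\leq j\leq a_{2k+2}$, so $T_{x}(m)\asymp q_{2k+1}^{-t}a_{2k+2}^{1-t}$, which exceeds $\delta_{m}=q_{2k}^{-t}$ by an unbounded factor when the partial quotients are unbounded. Since you allow ``any point of the corresponding cylinder'', this breaks the argument; the point $v$ must be chosen inside the cylinder as carefully as $m$ is.

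Second, the estimate you yourself flag as the heart of the matter, $T_{w}(m)=O(\delta_{m})$ along an infinite subsequence of $S_{w}$, is asserted rather than proved. The gap bound $n'-n\geq c\,n^{1/\alpha}$ that you correctly extract from $\alpha$-repulsiveness yields only $T_{w}(m)=O(m^{1-1/\alpha-t})$, and compared with $\delta_{m}^{r}=m^{-tr}$ this tends to $0$ only for $r<1-(\alpha-1)/(\alpha t)=\varrho_{\alpha}(t)<1$; for $r\in[\varrho_{\alpha}(t),1)$ it gives nothing, and \Cref{prop:curious-2}~\ref{prop:curious-2b} shows that at generic branch points the quotient genuinely diverges in that range, so the careful choice of $m$ cannot be finessed and the appeal to Fine--Wilf and ``sparsely many scales'' remains a heuristic. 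The paper sidesteps all of this by working with the explicit points: by \eqref{eq:xyL-2}, at $m=q_{2n+2}$ the distance $d_{s,\delta}(x,\sigma^{\lvert\mathcal{L}_{n+1}\rvert}(y))$ reduces to the single term $\delta_{q_{2n+2}}$ plus a tail that \Cref{lem:infinite_median_sum_t_1} makes $o(\delta_{q_{2n+2}}^{r})$ --- this is where the standing hypothesis $A_{\alpha}(\theta)<\infty$ enters, which your sketch never actually uses --- whence the quotient is $q_{2n+2}^{t(r-1)}+o(1)\to 0$ via \eqref{eq:phi-1-2-3}. To complete your more general argument you would need to supply precisely this tail estimate at the block ends, for both $w$ and a suitably chosen $v$.
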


\vspace{-0.75em}

\begin{definition}\label{defn:holderregular1}
Let $r, \epsilon > 0$ be given.
\begin{enumerate}[leftmargin=2.5em]
\item The metric $d_{s, \delta}$ is \emph{sequentially $\overline{r}$-H\"older regular} to $d_{\delta}$ if $\psi(r) < \infty$.
\item The metric $d_{s, \delta}$ is \emph{sequentially $\underline{r}$-H\"older regular} to $d_{\delta}$ if $\psi(r) > 0$.
\item The metric $d_{s, \delta}$ is \emph{sequentially $r$-H\"older regular} to $d_{\delta}$ if $0 < \psi(r) < \infty$.
\end{enumerate}
\end{definition}

\vspace{-0.25em}

We will also require the following weaker notion of sequential H\"older regularity.

\vspace{-0.25em}

\begin{definition}\label{defn:holderregular2}
Let $r, \epsilon > 0$ be given.
\begin{enumerate}[leftmargin=2.5em]
\item The metric $d_{s, \delta}$ is \emph{critically sequentially $\overline{r}$-H\"older regular} to $d_{\delta}$ if $\psi(r - \epsilon) = 0$, for all $0<\epsilon < r$.
\item The metric $d_{s, \delta}$ is \emph{critically sequentially $\underline{r}$-H\"older regular} to $d_{\delta}$ if $\psi(r + \epsilon) = \infty$, for all $\epsilon > 0$.
\item The metric $d_{s, \delta}$ is \emph{critically sequentially $r$-H\"older regular} to $d_{\delta}$ if $d_{s, \delta}$ is critically sequentially $\overline{r}$- and $\underline{r}$-H\"older regular to $d_{\delta}$.
\end{enumerate}
\end{definition}

For a given $r \in (0, 1]$, if the metric $d_{s, \delta}$ is sequentially $\overline{r}$-H\"older (respectively, $\underline{r}$-H\"older) regular to $d_{\delta}$, then $d_{s, \delta}$ is critically sequentially $\overline{r}$-H\"older (respectively, $\underline{r}$-H\"older) regular to $d_{\delta}$.

\section{Main results}\label{sec:main}

\begin{wrapfigure}[7]{r}{0.3\textwidth}
\begin{center}
\vspace{-1.5em}
\includegraphics[width=0.23\textwidth]{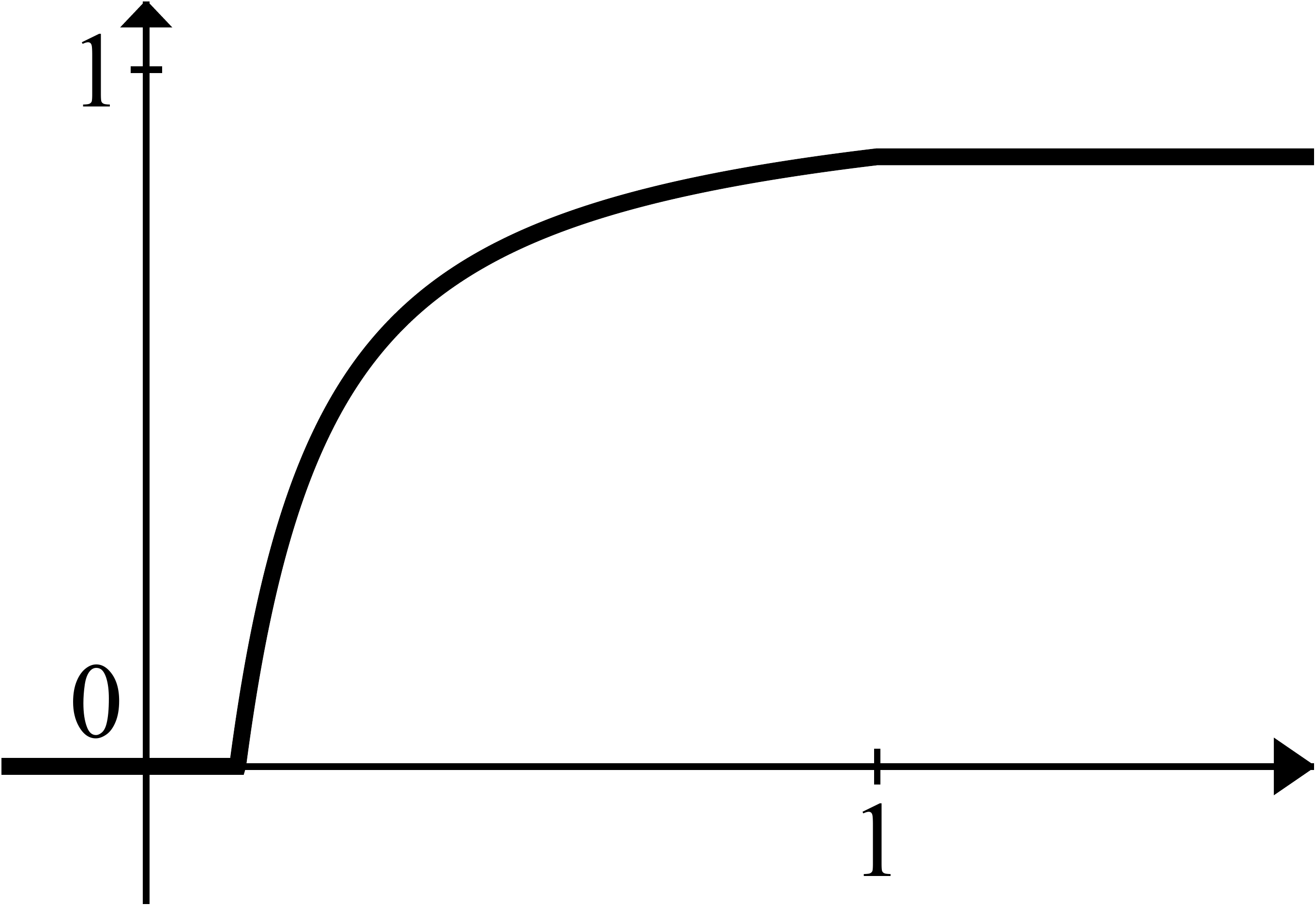}
\end{center}
\caption{The graph of  $\varrho_{8/7}$.\label{fig:rho}}
\end{wrapfigure}
For $\alpha > 1$ define the continuous function $\varrho_{\alpha} \colon \mathbb{R} \to \mathbb{R}$ by
\begin{align*}
\varrho_{\alpha}(t) \coloneqq
\begin{cases}
0 & \text{if} \; t \leq 1 - 1/\alpha\\
1 - (\alpha - 1)/(\alpha t) & \text{if} \; 1- 1/\alpha < t < 1,\\
1/\alpha & \text{if} \; t \geq 1.
\end{cases}\qquad \qquad \qquad \qquad \qquad \qquad \qquad \qquad
\end{align*}
Notice, $\varrho_{\alpha}$ is concave on $[1-1/\alpha,\infty)$ and, on the interval $(1-1/\alpha, 1)$, it is strictly increasing, see \Cref{fig:rho}. Also, for $t \leq 1 - 1/\alpha$, we have that $1 - (\alpha - 1)/(\alpha t) \leq 0$.

\begin{theorem}\label{thm:thm2}
Let $X$ be a Sturmian subshift of slope $\theta$, let $\alpha > 1$ be given, set $\delta \coloneqq ( n^{-t} )_{n \in \mathbb{N}}$ and fix $t \in (1 - 1/\alpha, 1)$.
\begin{enumerate}[label=(\arabic*),leftmargin=2.5em]
\item\label{thm:thm2(1)} 
The metric $d_{s, \delta}$ is sequentially $\overline{\varrho_{\alpha}(t)}$-H\"older regular to the metric $d_{\delta}$ if and only if $\theta \in \overline{\Theta}_{\alpha}$.
\item\label{thm:thm2(2)} 
The metric $d_{s, \delta}$ is sequentially $\underline{\varrho_{\alpha}(t)}$-H\"older regular to the metric $d_{\delta}$ if and only if $\theta \in \underline{\Theta}_{\alpha}$.
\end{enumerate}
Hence, $d_{s, \delta}$ is sequentially $\varrho_{\alpha}(t)$-H\"older regular to $d_{\delta}$ if and only if $\theta \in \Theta_{\alpha}$.
\end{theorem}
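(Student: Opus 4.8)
The plan is to establish the following quantitative sharpening, from which all three assertions of \Cref{thm:thm2} follow via \Cref{defn:*}: with $\textup{A}\coloneqq\textup{A}_{\alpha}(\theta)$ there are constants $0<c_{1}\leq c_{2}<\infty$, depending only on $\alpha$ and $t$, such that
\begin{align*}
c_{1}\,\textup{A}^{1/\alpha}\ \leq\ \psi(\varrho_{\alpha}(t))\ \leq\ c_{2}\,\textup{A}^{1/\alpha},
\end{align*}
with the conventions $\infty^{1/\alpha}=\infty$ and $0^{1/\alpha}=0$; this gives $\psi(\varrho_{\alpha}(t))<\infty\Leftrightarrow\textup{A}<\infty\Leftrightarrow\theta\in\overline{\Theta}_{\alpha}$ and $\psi(\varrho_{\alpha}(t))>0\Leftrightarrow\textup{A}>0\Leftrightarrow\theta\in\underline{\Theta}_{\alpha}$, i.e.\ parts (1) and (2), and hence the concluding equivalence. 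Note that for $t\in(1-1/\alpha,1)$ one has $\varrho_{\alpha}(t)\in(0,1/\alpha)$, so $r\coloneqq\varrho_{\alpha}(t)$ and $1-r$ are positive and $\beta\coloneqq\alpha t-(\alpha-1)>0$. First I would reduce the computation of $\psi_{w}(r)$ to a combinatorial limit superior. Since a Sturmian subshift is minimal and infinite, hence perfect, the set $\mathcal{N}(w)\coloneqq\{m\colon w\vert_{m}\in\mathcal{L}_{\textup{R}}(X)\}$ is unbounded for every $w\in X$; moreover $\lvert v\vee w\rvert\in\mathcal{N}(w)$ whenever $v\neq w$, and each $m\in\mathcal{N}(w)$ is attained. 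Writing $T_{z}(m)\coloneqq\sum_{n>m}\overline{b}_{n}(z)\delta_{n}$, so $d_{s,\delta}(w,v)=\delta_{m}+T_{w}(m)+T_{v}(m)$ with $m=\lvert v\vee w\rvert$, and $S(m)\coloneqq\sup_{z\in X}T_{z}(m)$, one gets
\begin{align*}
\psi_{w}(r)=\limsup_{\substack{m\to\infty\\ m\in\mathcal{N}(w)}}\frac{\delta_{m}+T_{w}(m)+\sup_{v}T_{v}(m)}{\delta_{m}^{r}}\ \leq\ \limsup_{m\to\infty}\frac{\delta_{m}+2S(m)}{\delta_{m}^{r}}
\end{align*}
uniformly in $w$, so $\psi(r)\leq\limsup_{m\to\infty}(\delta_{m}+2S(m))\delta_{m}^{-r}$.

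The heart of the matter is the combinatorial estimate: if $q_{k}\leq m<q_{k+1}$ then $S(m)\leq\frac{C}{1-t}\sum_{l\geq k}q_{l}^{-t}a_{l+1}^{1-t}$. This follows from the uniqueness of the right special word $s_{n}$ per length (\Cref{rmk:BFMS-2002}), so $w\vert_{n}\in\mathcal{L}_{\textup{R}}(X)$ forces $w\vert_{n}=s_{n}$; from the fact that if $w\vert_{n}=s_{n}$ and $w\vert_{n'}=s_{n'}$ with $n<n'$ then $n'-n$ is a period of $s_{n'}$; and from the classical description of the periods of the prefixes of a characteristic Sturmian word (and of their reversals $s_{n}$) via the $q_{l}$: in $[q_{l},q_{l+1})$ consecutive right-special-prefix lengths differ by at least $q_{l}$, so there are at most $a_{l+1}+1$ of them, the $i$-th smallest being $\geq iq_{l}$; summing $\delta_{n}=n^{-t}$ over these and over $l\geq k$ gives the bound. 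For the upper bound, fix $\epsilon>0$ and use $a_{l+1}q_{l}^{1-\alpha}\leq\textup{A}+\epsilon=:\textup{A}'$ for all large $l$: then $q_{l}^{-t}a_{l+1}^{1-t}\leq(\textup{A}')^{1-t}q_{l}^{-\beta}$, and since $q_{l+2}\geq 2q_{l}$ the tail $\sum_{l\geq k}q_{l}^{-\beta}$ is dominated by a constant times $q_{k}^{-\beta}$, so $S(m)\lesssim(\textup{A}')^{1-t}q_{k}^{-\beta}$. Combining this with $m<q_{k+1}\leq(a_{k+1}+1)q_{k}\lesssim\textup{A}'q_{k}^{\alpha}$ and $\delta_{m}^{-r}=m^{tr}$, and noting that the exponent of $q_{k}$ in $m^{tr}q_{k}^{-\beta}$ equals $\alpha tr-\beta=\alpha t(r-1)+(\alpha-1)=0$ when $r=\varrho_{\alpha}(t)$, while the exponent of $\textup{A}'$ collapses to $tr+1-t=1-t(1-r)=1/\alpha$, one gets $(\delta_{m}+2S(m))\delta_{m}^{-r}\lesssim m^{-t(1-r)}+(\textup{A}')^{1/\alpha}$; letting $m\to\infty$ and then $\epsilon\to 0$ yields $\psi(\varrho_{\alpha}(t))\lesssim\textup{A}^{1/\alpha}$ (and $\psi(\varrho_{\alpha}(t))=0$ when $\textup{A}=0$).

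For the lower bound I would exhibit two explicit points realising the extremal behaviour. Let $u_{k}$ be the $k$-th standard word of $\theta$ (length $q_{k}$), $\widetilde{u}_{k}$ its reversal — the unique right special word of length $q_{k}$ — and recall $\widetilde{u}_{k+1}=\widetilde{u}_{k-1}\widetilde{u}_{k}^{\,a_{k+1}}$, so $\widetilde{u}_{k}$ is a prefix of $\widetilde{u}_{k+2}$ and the limits $w^{\textup{ev}}\coloneqq\lim_{j}\widetilde{u}_{2j}$, $w^{\textup{od}}\coloneqq\lim_{j}\widetilde{u}_{2j+1}$ exist in $X$. For $w=w^{\textup{ev}}$, the borders of $\widetilde{u}_{2j+2}$ (smallest period $q_{2j+1}$) place right special prefixes of $w$ at the lengths $q_{2j+2}-iq_{2j+1}$, $i=0,\dots,a_{2j+2}$: about $a_{2j+2}$ of them, spaced by $q_{2j+1}$, filling $[q_{2j},q_{2j+2}]$. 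Choosing $m\in\mathcal{N}(w)$ with $m\asymp q_{2j+2}$ leaves about $a_{2j+2}/2$ of these right special prefixes of $w$ in $(m,q_{2j+2}]$, each of $\delta$-weight $\asymp q_{2j+2}^{-t}$, so $T_{w}(m)\gtrsim a_{2j+2}^{1-t}q_{2j+1}^{-t}$ while $\delta_{m}^{r}\asymp(a_{2j+2}q_{2j+1})^{-tr}$; the same exponent bookkeeping gives $d_{s,\delta}(w,v)/d_{\delta}(w,v)^{r}\gtrsim(a_{2j+2}q_{2j+1}^{1-\alpha})^{1/\alpha}$ for suitable $v\neq w$. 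Hence $\psi(r)\geq\psi_{w^{\textup{ev}}}(r)\gtrsim\limsup_{j}(a_{2j+2}q_{2j+1}^{1-\alpha})^{1/\alpha}$ and, symmetrically, $\psi(r)\gtrsim\limsup_{j}(a_{2j+1}q_{2j}^{1-\alpha})^{1/\alpha}$ from $w^{\textup{od}}$; taking the maximum over parities gives $\psi(\varrho_{\alpha}(t))\gtrsim\bigl(\limsup_{n}a_{n}q_{n-1}^{1-\alpha}\bigr)^{1/\alpha}=\textup{A}^{1/\alpha}$.

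The main obstacle is the combinatorial input: a sufficiently precise description of \emph{which} prefixes of a point of $X$ are right special, together with the length bookkeeping — this is exactly the bound on $S(m)$ and the identification of the right special prefixes of $w^{\textup{ev}},w^{\textup{od}}$. Both rest on the border/period structure of the standard words $u_{k}$ (that the periods of a length-$n$ Sturmian prefix are precisely the $q_{l}$ with $q_{l}\leq n$, up to the usual corrections), and it is here that one must ensure a \emph{single} point carries the extremal configuration for infinitely many scales, which is why one splits into the two nested families $(\widetilde{u}_{2j})_{j}$ and $(\widetilde{u}_{2j+1})_{j}$ rather than using one. The matching of exponents, once the formula for $\varrho_{\alpha}$ is in hand, is routine.
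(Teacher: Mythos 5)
Your proposal is correct and, at its core, follows the same strategy as the paper: a two--sided estimate $\psi(\varrho_{\alpha}(t))\asymp \textup{A}_{\alpha}(\theta)^{1/\alpha}$, with the lower bound realised along exactly the two points the paper calls $x$ and $y$ (your $w^{\textup{ev}},w^{\textup{od}}$ are the limits of $\mathcal{R}_{k}$ and $\mathcal{L}_{k}$), the same choice $m\asymp q_{2j+2}-\lceil a_{2j+2}/2\rceil q_{2j+1}$, and the same exponent bookkeeping as the chain of inequalities \eqref{eq:A-theta_lower_bound_for_psi_r}. Where you genuinely diverge is the upper bound. The paper first computes the asymptotics of the ratios along $x$, $y$ and their shifts (\Cref{prop:curious,prop:curious-2}) and then transfers these to an arbitrary point $w$ by the four--way case analysis of \Cref{prop:lower_Holder_reg}, which dominates the tail sums for $w$ by those for $x$ and $y$; you instead bound $\sup_{z}\sum_{n>m}\overline{b}_{n}(z)\delta_{n}$ directly, via the observation that two right special prefix lengths $n<n'$ of a single point give a border of $s_{n'}$, hence a period of the corresponding standard prefix, so that consecutive elements of $\mathcal{N}(z)\cap[q_{l},q_{l+1})$ differ by at least $q_{l}$. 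This gap claim does hold (the minimal period of the length-$n'$ standard prefix is $q_{l}$ once $n'\geq q_{l}+q_{l-1}-1$, and in the residual range the longest border drops below $q_{l}$), and it packages the paper's \Cref{cor:branching_points} plus \Cref{prop:lower_Holder_reg} into one uniform lemma, yielding the clean two--sided inequality $c_{1}\textup{A}^{1/\alpha}\leq\psi(\varrho_{\alpha}(t))\leq c_{2}\textup{A}^{1/\alpha}$ from which both parts of the theorem (and the degenerate cases $\textup{A}=0,\infty$) follow at once. The price is that you import the border/period structure of standard words as a ``classical'' fact rather than deriving it, whereas the paper proves the equivalent statement from scratch via \Cref{rmk:BFMS-2002} and the induction in \Cref{cor:branching_points}; if you were to write this up you would need to supply that lemma (or cite de Luca--Mignosi explicitly), but there is no gap in the argument itself.
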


\begin{theorem}\label{thm:thm2.5}
Let $X$ be a Sturmian subshift of slope $\theta$, let $\alpha > 1$ be given, set $\delta \coloneqq ( n^{-t} )_{n \in \mathbb{N}}$.
\begin{enumerate}[label=(\arabic*),leftmargin=2.5em]
\item\label{thm:thm2(2)(c)} If $t = 1$, then we have the following.
\begin{enumerate}[label=(\alph*)]
\item\label{thm:thm2(2)(c)2} If $d_{s, \delta}$ is sequentially $\overline{\varrho_{\alpha}(t)}$-H\"older regular to $d_{\delta}$, then $\theta \in \overline{\Theta}_{\alpha}$.
\item\label{thm:thm2(2)(c)3} If $\theta \in \overline{\Theta}_{\alpha}$, then $d_{s, \delta}$ is critically sequentially $\overline{\varrho_{\alpha}(t)}$-H\"older regular to $d_{\delta}$.
\item\label{thm:thm2(2)(c)4} If $\theta \in \underline{\Theta}_{\alpha}$, then $d_{s, \delta}$ is critically sequentially $\underline{\varrho_{\alpha}(t)}$-H\"older regular to $d_{\delta}$.
\end{enumerate}
\item\label{thm:thm2(2)(a)}
If $t > 1$, then we have the following.
\begin{enumerate}[label=(\alph*)]
\item\label{thm:thm2(2)(a)3} If $\theta \in \overline{\Theta}_{\alpha}$, then $d_{s, \delta}$ is sequentially $\overline{\varrho_{\alpha}(t)}$-H\"older regular to $d_{\delta}$.
\item\label{thm:thm2(2)(a)4} If $\theta \in \underline{\Theta}_{\alpha}$, then $d_{s, \delta}$ is sequentially $\underline{\varrho_{\alpha}(t)}$-H\"older regular to $d_{\delta}$.
\end{enumerate}
\item\label{thm:thm2(2)(b)}
\begin{enumerate}[label=(\alph*)]
\item\label{thm:thm2(2)(b)2} If $t \in (1, \alpha/(\alpha - 1))$ and if $d_{s, \delta}$ is sequentially $\overline{\varrho_{\alpha}(t)}$-H\"older regular to $d_{\delta}$, then $\theta \in \overline{\Theta}_{\alpha}$.
\item\label{thm:thm2(2)(b)3} If $t \geq \alpha/(\alpha - 1)$, then $d_{s, \delta}$ is $\varrho_{\alpha}(t)$-H\"older continuous with respect to $d_{\delta}$.
\end{enumerate}
\end{enumerate}
\end{theorem}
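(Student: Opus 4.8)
The plan is to run the argument in parallel with the proof of \Cref{thm:thm2}, so that the genuinely new features are confined to the behaviour at the two critical parameter values $t=1$ and $t=\alpha/(\alpha-1)$; recall that $\varrho_\alpha(t)=1/\alpha$ for $t\geq 1$. \emph{Reduction.} For $v\neq w$ in $X$ put $N\coloneqq\lvert v\vee w\rvert\geq 1$. Since $v|_N=w|_N$ has the two distinct one-letter extensions $v|_{N+1},w|_{N+1}$ in $X$, it is a right special word, hence the unique one of length $N$; calling it $u_N$ we get
\begin{align*}
d_{s,\delta}(v,w)=\delta_N+\Sigma(v;N)+\Sigma(w;N),\qquad \Sigma(z;N)\coloneqq\sum_{n>N}\overline{b}_n(z)\,\delta_n,
\end{align*}
and $\Sigma(z;N)\leq\Sigma_{\max}(N)\coloneqq\sup\{\Sigma(z;N)\colon z\in X,\ z|_N=u_N\}$. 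As $\delta_N^{\,1-r}\to 0$ for $r\in(0,1)$, the leading term $\delta_N$ is immaterial for the relevant limit superior, so for $r\in(0,1)$ one has $\psi(r)\asymp\limsup_{N\to\infty}\Sigma_{\max}(N)/\delta_N^{\,r}$. The combinatorial heart is a two-sided estimate of $\Sigma_{\max}(N)$, valid for any $\delta_n=n^{-t}$, in terms of the continued-fraction data $(q_k)_k$, $(a_k)_k$ of $\theta$; this is the same estimate that underpins \Cref{thm:thm2}, resting on \Cref{thm:MH-1940} together with the continued-fraction (Ostrowski-type) description of the lengths at which a point of $X$ can carry a right special prefix, and the pairs $v,w$ realising it are built as there. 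We now insert $\delta_n=n^{-t}$ with $t\geq 1$ into this machinery, in three ranges of $t$.

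\emph{The range $t\geq\alpha/(\alpha-1)$ (item~\ref{thm:thm2(2)(b)3}).} Here a crude bound suffices, with no Diophantine hypothesis. Since $t>1$, $\sum_{n>N}\delta_n\leq\int_N^\infty x^{-t}\,dx=(t-1)^{-1}N^{1-t}$ for every $N\geq1$, whence $\Sigma_{\max}(N)\leq(t-1)^{-1}N^{1-t}$ and, using $N^{-t}\leq N^{1-t}$,
\begin{align*}
\frac{d_{s,\delta}(v,w)}{d_\delta(v,w)^{1/\alpha}}\;\leq\;C\,N^{\,1-t(1-1/\alpha)}
\end{align*}
for a constant $C=C(t)$ and all $v\neq w$ in $X$. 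Since $t\geq\alpha/(\alpha-1)$ is equivalent to $1-t(1-1/\alpha)\leq 0$, the right-hand side is $\leq C$ for every $N\geq 1$; hence $d_{s,\delta}$ is $\varrho_\alpha(t)$-H\"older continuous with respect to $d_\delta$ (and, in this range, item~\ref{thm:thm2(2)(a)3} follows a fortiori, and item~\ref{thm:thm2(2)(a)4} from the lower-bound half of the $\Sigma_{\max}$-estimate below).

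\emph{The range $1<t<\alpha/(\alpha-1)$ (items~\ref{thm:thm2(2)(a)3}, \ref{thm:thm2(2)(a)4}, \ref{thm:thm2(2)(b)2}).} Now $1-t(1-1/\alpha)>0$, so the crude bound does not decide finiteness and one must use the sharp estimate of $\Sigma_{\max}(N)$. Substituting $\delta_n=n^{-t}$ and optimising $N$ over each block $[q_{k-1},q_k)$ — the partial sums $\sum_{i\leq a}i^{-t}$ occurring here being comparable to $a^{1-t}$ — yields, exactly as in the proof of \Cref{thm:thm2}, an expression $\psi(1/\alpha)\asymp\limsup_{k\to\infty}\Phi_t(a_k,q_{k-1})$ with $\Phi_t$ a monomial calibrated so that this limit superior is finite precisely when $\textup{A}_\alpha(\theta)<\infty$ and positive precisely when $\textup{A}_\alpha(\theta)>0$. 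Hence $\psi(\varrho_\alpha(t))<\infty\iff\theta\in\overline{\Theta}_\alpha$ and $\psi(\varrho_\alpha(t))>0\iff\theta\in\underline{\Theta}_\alpha$, which is items~\ref{thm:thm2(2)(a)3} and~\ref{thm:thm2(2)(a)4}, and — in contrapositive form — item~\ref{thm:thm2(2)(b)2}.

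\emph{The boundary $t=1$ (item~\ref{thm:thm2(2)(c)}), and the main obstacle.} Here $\varrho_\alpha(1)=1/\alpha$ and the power sums $\sum_{i\leq a}i^{1-t}\asymp a^{1-t}$ are replaced by harmonic sums $\sum_{i\leq a}i^{-1}=\log a+O(1)$. Repeating the optimisation with these logarithmic terms still yields the one-directional implications: if $d_{s,\delta}$ is sequentially $\overline{\varrho_\alpha(1)}$-H\"older regular then $\theta\in\overline{\Theta}_\alpha$ (item~\ref{thm:thm2(2)(c)2}, via the contrapositive blow-up construction along scales $q_k$ with $a_{k+1}$ large, which survives because $\log a_{k+1}\to\infty$ there); and if $\theta\in\overline{\Theta}_\alpha$ (resp.\ $\theta\in\underline{\Theta}_\alpha$) then $\psi(\varrho_\alpha(1)-\epsilon)=0$ (resp.\ $\psi(\varrho_\alpha(1)+\epsilon)=\infty$) for every $\epsilon>0$, since moving off the exponent $1/\alpha$ makes the governing exponent strictly sub- (resp.\ super-) critical and the logarithmic factor irrelevant — items~\ref{thm:thm2(2)(c)3} and~\ref{thm:thm2(2)(c)4}. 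One does not in general obtain the full sequential regularity at $r=1/\alpha$ because there the $\log$-factor is not controlled by $\textup{A}_\alpha(\theta)$ alone. The hard part throughout is precisely this sharp two-sided estimate of $\Sigma_{\max}(N)$: determining via the Ostrowski/continued-fraction structure which chains of lengths can simultaneously be right special prefixes of a single point of $X$, and summing the resulting near-geometric series accurately enough to see the thresholds at $t=1$ and $t=\alpha/(\alpha-1)$ — with $t=1$ the subtlest case, the harmonic corrections being exactly what forces only critical (rather than full) sequential H\"older regularity.
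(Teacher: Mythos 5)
Your overall strategy---reduce everything to a two-sided estimate of the tail sums $\sum_{n>N}\overline{b}_{n}(z)\,n^{-t}$ over right special prefixes and then locate the thresholds at $t=1$ and $t=\alpha/(\alpha-1)$---is indeed the paper's strategy, and your treatment of the range $t\geq\alpha/(\alpha-1)$ coincides with the paper's proof of the H\"older-continuity statement (the crude bound $\sum_{n>N}n^{-t}\leq(t-1)^{-1}N^{1-t}\leq(t-1)^{-1}(N^{-t})^{1/\alpha}$). The $t=1$ discussion is also broadly in line with the paper, although the mechanism you cite for ``sequentially $\overline{1/\alpha}$-regular implies $\theta\in\overline{\Theta}_{\alpha}$'' is not the divergence of $\log a_{k+1}$: in the paper's computation the test pairs sit at $j=\lceil a_{k+1}/2\rceil$, the logarithm contributes only the constant $\ln(3/2)$, and the blow-up comes from the power counting $a_{k+1}^{1/\alpha}q_{k}^{1/\alpha-1}=(a_{k+1}q_{k}^{1-\alpha})^{1/\alpha}$.

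The genuine gap is the range $1<t<\alpha/(\alpha-1)$. The asserted comparison $\sum_{i\leq a}i^{-t}\asymp a^{1-t}$ is false for $t>1$ (the sum is bounded by $\zeta(t)$); what survives is $\sum_{i\geq j}i^{-t}\asymp j^{1-t}$, and this changes the optimisation completely. Carrying it out, the test family built from partial blocks ($j\asymp a_{k+1}$) has critical exponent $1-(\alpha-1)/(\alpha t)$, which for $t>1$ is \emph{strictly larger} than $\varrho_{\alpha}(t)=1/\alpha$, while the family built from full blocks has critical exponent $1$; at $r=1/\alpha$ both ratios tend to $0$ when $\theta\in\Theta_{\alpha}$. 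Hence there is no monomial $\Phi_{t}(a_{k},q_{k-1})$ with $\psi(1/\alpha)\asymp\limsup_{k}\Phi_{t}(a_{k},q_{k-1})$ calibrated to $A_{\alpha}(\theta)$: the natural lower bound at $r=1/\alpha$ detects $A_{\alpha/(\alpha-t(\alpha-1))}(\theta)$, with $\alpha/(\alpha-t(\alpha-1))>\alpha$, not $A_{\alpha}(\theta)$, so neither your claimed equivalence ``$\psi(1/\alpha)>0\iff A_{\alpha}(\theta)>0$'' nor the converse finiteness statement (from which you derive the implication towards $\overline{\Theta}_{\alpha}$) follows from the sketch. Moreover, collapsing everything into one quantity $\Sigma_{\max}(N)$ hides the reason the exponent is capped at $1/\alpha$ for $t\geq1$: to bound $\psi_{w}$ for an arbitrary $w\in X$ one must compare the branching structure of $w$ with that of the two distinguished points $x,y$, and when the branching point of $w$ at level $N$ is the last of a block the next one occurs only at level comparable to $N^{\alpha}$; this is why \Cref{prop:lower_Holder_reg} controls $\psi_{w}(r)$ by the quantities $\psi_{z,n}^{(k)}(r)$ \emph{and} $\psi_{z,n}(\alpha r)$, and the constraint $\alpha r\leq 1$ coming from \Cref{prop:curious} is exactly what produces $\varrho_{\alpha}(t)=1/\alpha$. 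Without this two-scale upper bound, the finiteness assertion for $t>1$ is not established either.
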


\begin{corollary}\label{cor:cor-main}
Let $X$ be a Sturmian subshift of slope $\theta$, let $\alpha > 1$ be given, set $\delta \coloneqq ( n^{-t} )_{n \in \mathbb{N}}$ and fix $t > 1 - 1/\alpha$. If $\theta \in \Theta_{\alpha}$, then the metric $d_{s, \delta}$ is critically sequentially $\varrho_{\alpha}(t)$-H\"older regular to $d_{\delta}$.
\end{corollary}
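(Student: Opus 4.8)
The plan is to read the corollary off \Cref{thm:thm2,thm:thm2.5} together with the elementary observation recorded immediately after \Cref{defn:holderregular2}, treating separately the three regimes $t\in(1-1/\alpha,1)$, $t=1$ and $t>1$, which together exhaust $(1-1/\alpha,\infty)$. The one preliminary point is that $\varrho_{\alpha}(t)\in(0,1/\alpha]\subseteq(0,1]$ for every $t>1-1/\alpha$: on $(1-1/\alpha,1)$ the function $\varrho_{\alpha}$ is strictly increasing from $0$ to $1/\alpha$, and $\varrho_{\alpha}(t)=1/\alpha<1$ for $t\geq 1$ since $\alpha>1$. In particular the hypothesis $r\in(0,1]$ needed to pass from ordinary to critical sequential H\"older regularity is always satisfied with $r=\varrho_{\alpha}(t)$.

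For $t\in(1-1/\alpha,1)$ I would argue as follows. Since $\theta\in\Theta_{\alpha}=\underline{\Theta}_{\alpha}\cap\overline{\Theta}_{\alpha}$, \Cref{thm:thm2} shows that $d_{s,\delta}$ is both sequentially $\overline{\varrho_{\alpha}(t)}$-H\"older regular and sequentially $\underline{\varrho_{\alpha}(t)}$-H\"older regular to $d_{\delta}$; the remark after \Cref{defn:holderregular2} then upgrades each of these to its critical counterpart, so $d_{s,\delta}$ is critically sequentially $\varrho_{\alpha}(t)$-H\"older regular to $d_{\delta}$. For $t=1$ the claim is immediate from \Cref{thm:thm2.5}: parts \ref{thm:thm2(2)(c)3} and \ref{thm:thm2(2)(c)4}, which apply because $\theta\in\overline{\Theta}_{\alpha}$ and $\theta\in\underline{\Theta}_{\alpha}$ respectively, already assert critical sequential $\overline{\varrho_{\alpha}(t)}$- and $\underline{\varrho_{\alpha}(t)}$-H\"older regularity, and their conjunction is precisely the desired conclusion. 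For $t>1$, parts \ref{thm:thm2(2)(a)3} and \ref{thm:thm2(2)(a)4} of \Cref{thm:thm2.5} yield ordinary sequential $\overline{\varrho_{\alpha}(t)}$- and $\underline{\varrho_{\alpha}(t)}$-H\"older regularity, which the remark after \Cref{defn:holderregular2} (applicable since $\varrho_{\alpha}(t)=1/\alpha\in(0,1]$) again promotes to their critical versions, giving critical sequential $\varrho_{\alpha}(t)$-H\"older regularity.

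I do not expect a genuine obstacle in this corollary: all of the analytic work is contained in \Cref{thm:thm2,thm:thm2.5}, and what remains is a careful matching of definitions. The only point requiring attention is the distinction between ordinary and critical sequential H\"older regularity -- for $t\neq 1$ one must verify $\varrho_{\alpha}(t)\in(0,1]$ so that the ``gap'' observation after \Cref{defn:holderregular2} is available, whereas for $t=1$ the critical statement is built directly into \Cref{thm:thm2.5}.
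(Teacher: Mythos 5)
Your proposal is correct and is exactly the derivation the paper intends: the corollary carries no separate proof because it is read off from \Cref{thm:thm2} (for $t\in(1-1/\alpha,1)$), \Cref{thm:thm2.5}~\ref{thm:thm2(2)(c)}\ref{thm:thm2(2)(c)3}--\ref{thm:thm2(2)(c)4} (for $t=1$) and \Cref{thm:thm2.5}~\ref{thm:thm2(2)(a)}\ref{thm:thm2(2)(a)3}--\ref{thm:thm2(2)(a)4} (for $t>1$), upgraded where needed via the remark following \Cref{defn:holderregular2}. Your check that $\varrho_{\alpha}(t)\in(0,1]$ on the whole range $t>1-1/\alpha$, which licenses that upgrade, is the only point of substance and you handle it correctly.
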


We conjecture that \Cref{thm:thm2,thm:thm2.5} hold true for $\delta_{n} = \ell(n) n^{-t}$, where $\ell$ is a slowly varying function. See \cite{BGT:1987} for further details on slowly varying functions.

\begin{theorem}\label{thm:thm1}
For  $\alpha > 1$  and $\theta\in [0,1]$ irrational,  the following are equivalent.
\begin{enumerate}[label=(\arabic*),leftmargin=2.5em]
\item\label{thm:thm1(1)} The Sturmian subshift of slope $\theta$ is $\alpha$-repetitive.
\item\label{thm:thm1(2)} The Sturmian subshift  of slope $\theta$ is $\alpha$-repulsive.
\item\label{thm:thm1(5)} The Sturmian subshift  of slope $\theta$ is $\alpha$-finite.
\item\label{thm:thm1(3)} The slope $\theta$ is well-approximable of $\alpha$-type.
\end{enumerate}
\end{theorem}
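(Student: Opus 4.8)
The plan is to collapse all four conditions onto the single quantity $A_{\alpha}(\theta)=\limsup_{k\to\infty}a_{k+1}q_{k}^{1-\alpha}$ by proving the two identities $R_{\alpha}=A_{\alpha}(\theta)$ and $Q_{\alpha}=A_{\alpha}(\theta)$, together with the combinatorial equivalence ``$\ell_{\alpha}$ is finite and non-zero $\iff$ $Q_{\alpha}$ is finite and non-zero''. Since $\alpha$-repetitive, $\alpha$-repulsive and $\alpha$-finite mean exactly $0<R_{\alpha}<\infty$, $0<\ell_{\alpha}<\infty$ and $0<Q_{\alpha}<\infty$, and $\theta$ well-approximable of $\alpha$-type means $0<A_{\alpha}(\theta)<\infty$, these three inputs give at once $(1)\iff(4)$, $(3)\iff(4)$ and $(2)\iff(3)$, hence the theorem. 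Throughout I use $q_{k+1}=a_{k+1}q_{k}+q_{k-1}$, $q_{k-1}<q_{k}$ and $q_{k}\to\infty$, and I recall that a Sturmian subshift is aperiodic with exactly $m+1$ factors of length $m$.

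For the first identity, \Cref{thm:MH-1940} shows that on each block $q_{k}\le n<q_{k+1}$ one has $R(n)=R(q_{k})+(n-q_{k})=n+q_{k+1}+q_{k}-1$; since for $\alpha>1$ the function $n\mapsto(c+n)n^{-\alpha}$ is strictly decreasing on $(0,\infty)$ for every $c\ge0$, the maximum of $R(n)n^{-\alpha}$ over such a block is attained at $n=q_{k}$, so $R_{\alpha}=\limsup_{k}R(q_{k})q_{k}^{-\alpha}$. Writing $R(q_{k})q_{k}^{-\alpha}=a_{k+1}q_{k}^{1-\alpha}+(q_{k-1}+2q_{k}-1)q_{k}^{-\alpha}$ and noting that the second term is $O(q_{k}^{1-\alpha})\to0$ gives $R_{\alpha}=A_{\alpha}(\theta)$. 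For the second identity, if $W^{p}\in\mathcal{L}(X)$ with $\lvert W\rvert=m$ and $p\ge2$, then $W^{p}$ has at most $m$ distinct factors of length $m$ (its cyclic conjugates), hence fails to contain all $m+1$ length-$m$ factors, so by \Cref{defn:rep_fun} we must have $pm<R(m)$; thus $Q(m)\le(R(m)-1)/m$ and $Q_{\alpha}\le R_{\alpha}=A_{\alpha}(\theta)$. The reverse inequality comes from the standard-word structure of Sturmian words (e.g.\ \cite{Lot:2002,BFMS:2002}): for each $k$ there is a factor of length $q_{k}$ whose $a_{k+1}$-th power lies in $\mathcal{L}(X)$, so $Q(q_{k})\ge a_{k+1}$ and $Q_{\alpha}\ge\limsup_{k}a_{k+1}q_{k}^{1-\alpha}=A_{\alpha}(\theta)$.

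For the third input I work with any aperiodic subshift; this is the $\alpha$-version of ``repulsive $\iff$ power free''. If $W^{p}\in\mathcal{L}(X)$ with $\lvert W\rvert=n$ and $p\ge2$, then $W^{p-1}$ is a proper border of $W^{p}$, so $A_{\alpha,pn}\le n^{1-1/\alpha}(p-1)^{-1/\alpha}$; feeding in a sequence $(n_{j})$ witnessing $Q_{\alpha}=\infty$ (resp.\ $Q_{\alpha}>0$) forces $\ell_{\alpha}=0$ (resp.\ $\ell_{\alpha}<\infty$). Conversely, if $w$ is a proper border of a factor $W$ with $\lvert W\rvert=n$, then $W$ has period $p:=n-\lvert w\rvert$; because an aperiodic subshift has only finitely many factors of any fixed period, a sequence of such pairs with $n\to\infty$ must have $p\to\infty$ and $n/p\to\infty$, and writing $W=U^{\lfloor n/p\rfloor}U'$ with $\lvert U\rvert=p$ gives $Q(p)\ge\lfloor n/p\rfloor$; feeding in a sequence witnessing $\ell_{\alpha}=0$ (resp.\ $\ell_{\alpha}<\infty$) forces $Q_{\alpha}=\infty$ (resp.\ $Q_{\alpha}>0$). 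Combining the four implications yields $\ell_{\alpha}>0\iff Q_{\alpha}<\infty$ and $\ell_{\alpha}<\infty\iff Q_{\alpha}>0$, which is exactly what is needed.

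The elementary limit manipulations above are all routine; I expect the only genuine difficulty to be the lower bound $Q(q_{k})\ge a_{k+1}$, i.e.\ exhibiting long \emph{integer} powers of a word of length precisely $q_{k}$ in $\mathcal{L}(X)$. This forces one to unwind the continued-fraction/standard-word description of Sturmian words and to be careful with the convention $q_{1}=a_{1}$ (in particular the degenerate case $a_{1}=1$, which can be removed by passing to $1-\theta$ via \Cref{prop:theta-1-theta} and \Cref{rmk:eta}); if a self-contained treatment is preferred to citing \cite{Lot:2002,BFMS:2002}, the required power can be read off from the recursive construction of the right special words underlying \Cref{rmk:BFMS-2002}.
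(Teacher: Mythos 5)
Your proposal is correct, and it takes a genuinely different route from the paper's. The paper proves a cycle of implications (repetitive $\Rightarrow$ repulsive $\Rightarrow$ well-approximable $\Rightarrow$ repetitive, together with well-approximable $\Rightarrow$ finite $\Rightarrow$ repulsive), and its hardest step, repetitive $\Rightarrow$ repulsive, is a two-case contradiction argument: if $\ell_{\alpha}=0$ then $\alpha$-repetitivity forces a periodic point, while if $\ell_{\alpha}=\infty$ the words of \Cref{prop:blackboard} force $R_{\alpha}=0$. You bypass that step entirely by upgrading the paper's one-sided estimates to the exact identities $R_{\alpha}=Q_{\alpha}=A_{\alpha}(\theta)$: the monotonicity of $n\mapsto(n+c)n^{-\alpha}$ on the blocks $[q_{k},q_{k+1})$ of \Cref{thm:MH-1940} pins $R_{\alpha}$ to the subsequence $(q_{k})$, and the observation that a $p$-th power of a length-$m$ word has at most $m$ factors of length $m$ against the Sturmian complexity $m+1$ (equivalently, the unique right special word per length of \Cref{rmk:BFMS-2002}) gives $Q(m)\leq(R(m)-1)/m$ --- an inequality not in the paper, which instead routes finiteness of $Q_{\alpha}$ through repulsiveness by contradiction. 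Your third ingredient, the border/power correspondence giving $0<\ell_{\alpha}<\infty\iff 0<Q_{\alpha}<\infty$ for aperiodic subshifts, is essentially the paper's own argument for finite $\Rightarrow$ repulsive (and Theorem 3.1 of the cited sequel), and both approaches bottom out in the same combinatorial input, namely \Cref{prop:blackboard} supplying the powers $\mathcal{L}_{k}^{a_{2k}}$ and $\mathcal{R}_{k-1}^{a_{2k-1}}$ that yield $Q(q_{k})\geq a_{k+1}$, which you correctly identify as the one place where the standard-word recursion must be unwound. What your route buys is the sharper quantitative statement $R_{\alpha}=Q_{\alpha}=A_{\alpha}(\theta)$ and the elimination of the periodicity contradiction. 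One small caveat: your blanket claim that border pairs with $n\to\infty$ satisfy $n/p\to\infty$ does not follow from aperiodicity alone (it fails, e.g., for $W=\mathcal{L}_{k}\mathcal{L}_{k}$, $w=\mathcal{L}_{k}$); but it is only invoked along sequences where $(\lvert W\rvert-\lvert w\rvert)\lvert w\rvert^{-1/\alpha}$ stays bounded, where it does hold, and in any case only $Q(p)\geq\lfloor n/p\rfloor$ together with $p\to\infty$ is actually needed, so this is a slip of justification rather than a gap.
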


\begin{remark}\label{rmk:alpha=1}
An analogue of \Cref{thm:thm2,thm:thm1} also holds for the case that $\alpha = 1$, see \cite{KS:2012} and \cite{D:2000} respectively.  In this case, the sequential H\"older regularity is replaced by Lipschitz equivalence and the following conditions are required on the sequence $\delta = (\delta_{n})_{n \in \mathbb{N}}$. The sequence $\delta = (\delta_{n})_{n \in \mathbb{N}}$ is a strictly decreasing null-sequence, and there exist constant $\overline{c}, \underline{c}$, such that $\underline{c} \delta_{n} \leq \delta_{2n}$ and $\delta_{n m} \leq \overline{c} \delta_{n} \delta_{m}$, for all $n, m \in \mathbb{N}$.  Indeed, in \cite{KS:2012} the typical choice for such a sequence is suggested to be $\delta_{n} = \ln(n) n^{-t}$, where $t > 0$, and investigations of spectral metrics when $\delta_{n} = n^{-1}$ are carried out in \cite{JP:2015}.  Thus our choice of the sequence $\delta_{n}$ is a natural choice.  Further, it has been shown in \cite{KS:2012} that if the sequence $\delta_{n}$ is exponentially decreasing, then the metric $d_{s, \delta}$ is Lipschitz equivalent to $d_{\delta}$, independent of the Sturmian subshift.  The latter part of \Cref{thm:thm2.5}~\ref{thm:thm2(2)(b)}\ref{thm:thm2(2)(b)3} gives the counterpart condition to conclude H\"older continuity, independent of the Sturmian subshift.
\end{remark}

\begin{remark}\label{rmk:strengthening}
\Cref{prop:curious,prop:curious-2} give a clear indication that the sequential H\"older regularity in \Cref{thm:thm2,thm:thm2.5} cannot be strengthen to H\"older equivalence.
\end{remark}

A very natural question is if the three combinatorical properties ($\alpha$-repetitive, $\alpha$-repulsive and $\alpha$-finite) are equivalent outside of the setting of Sturmian sequences.  This was one of the main motivations of the sequel to this article \cite{DKMSS:2017}, where in Theorem 3.1 it is shown that in general $\alpha$-repulsive and $\alpha$-finite are equivalent.  Moreover, examples are given which demonstrate that the notions of $\alpha$-repetitive and $\alpha$-repulsive are in fact different.

\begin{theorem}\label{thm:thm3}
For $\alpha > 1$ we have that $\textup{dim}_{\mathcal{H}}(\Theta_{\alpha}) = \textup{dim}_{\mathcal{H}}(\underline{\Theta}_{\alpha}) = 2/(\alpha+1)$ and $\Lambda( \overline{\Theta}_{\alpha} ) = 1$.  (Here, $\textup{dim}_{\mathcal{H}}$ denotes the Hausdorff dimension and $\Lambda$ denotes the one-dimensional Lebesgue measure.)
\end{theorem}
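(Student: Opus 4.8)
The plan is to realise $\underline{\Theta}_{\alpha}$, $\overline{\Theta}_{\alpha}$ and $\Theta_{\alpha}$ in terms of the classical Jarn\'{\i}k and exact Jarn\'{\i}k sets so that the known Hausdorff-dimension computations of \cite{B:2003,B:2008,DK:2002,K:1997} can be invoked directly. The first step is to convert the defining condition $\textup{A}_{\alpha}(\theta) = \limsup_{n} a_{n} q_{n-1}^{1-\alpha}$ into a statement about how well $\theta$ is approximated by rationals. Recall the two-sided estimate $\tfrac{1}{2 q_{n} q_{n+1}} \le \lvert \theta - p_{n}/q_{n}\rvert \le \tfrac{1}{q_{n} q_{n+1}}$ together with $q_{n+1} = a_{n+1} q_{n} + q_{n-1}$, which gives $a_{n+1} q_{n} \le q_{n+1} \le (a_{n+1}+1) q_{n}$. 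Hence $a_{n+1} q_{n}^{1-\alpha} \asymp q_{n+1} q_{n}^{-\alpha} \asymp q_{n}^{-\alpha} / (q_{n}\lvert\theta - p_{n}/q_{n}\rvert)^{\pm 1}$; more precisely one shows that $\textup{A}_{\alpha}(\theta)$ is finite (resp.\ positive) if and only if $\limsup_{n} q_{n+1} q_{n}^{-\alpha}$ is finite (resp.\ positive), and the latter is, up to universal constants, $\limsup_{n} \lVert q_{n}\theta\rVert^{-1} q_{n}^{-\alpha}$, where $\lVert\cdot\rVert$ denotes distance to the nearest integer. Since the convergent denominators realise the best approximations, this limsup condition is equivalent to a statement of the form: there are (resp.\ are not) infinitely many $q$ with $\lVert q\theta\rVert \le C q^{-\alpha+1}$ for suitable constants $C$. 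Writing $\tau = \alpha - 1 > 0$, this places $\overline{\Theta}_{\alpha}$ (essentially) inside, and $\underline{\Theta}_{\alpha}$ (essentially) outside or on the boundary of, the Jarn\'{\i}k-type set $\{\theta : \lVert q\theta\rVert < q^{-\tau} \text{ for infinitely many } q\}$, whose Hausdorff dimension is the classical value $2/(\tau+2) = 2/(\alpha+1)$.

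Next I would handle the three sets separately. For $\overline{\Theta}_{\alpha}$: the condition $\textup{A}_{\alpha}(\theta) < \infty$ fails for Lebesgue-almost every $\theta$, because by the Borel--Bernstein theorem (or directly from the Gauss measure and the growth $q_{n} \ge F_{n}$) one has $a_{n+1} \ge q_{n}^{\alpha - 1}$ infinitely often for a.e.\ $\theta$ when $\alpha > 1$; wait --- this needs care. In fact $a_{n+1} q_{n}^{1-\alpha} \to \infty$ along a subsequence for a.e.\ $\theta$, so $\textup{A}_{\alpha}(\theta) = \infty$ a.e., giving $\Lambda(\overline{\Theta}_{\alpha}) = 1$ once one checks that the \emph{complement} of $\overline{\Theta}_{\alpha}$ is null --- equivalently that $\overline{\Theta}_{\alpha}$ has full measure. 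Hmm, this is the reverse of what one naively expects, so the careful statement is: $\Lambda(\complement\overline{\Theta}_{\alpha}) = 0$, i.e.\ $\{A_{\alpha} = \infty\}$ is null. This is a standard Borel--Cantelli argument: $\sum_{n} \Lambda(\{a_{n+1} \ge q_{n}^{\alpha-1}\})$; using $\Lambda(\{a_{n+1} \ge k \mid q_{n}\}) \asymp 1/k$ and $q_{n} \to \infty$ exponentially a.s., the series $\sum_{n} q_{n}^{-(\alpha-1)}$ converges a.s., so by Borel--Cantelli $a_{n+1} < q_{n}^{\alpha-1}$ eventually, hence $\textup{A}_{\alpha}(\theta) \le 1 < \infty$ a.e., giving $\Lambda(\overline{\Theta}_{\alpha}) = 1$ as claimed. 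For $\underline{\Theta}_{\alpha}$ and $\Theta_{\alpha}$: since $\Theta_{\alpha} = \underline{\Theta}_{\alpha} \cap \overline{\Theta}_{\alpha}$ and $\overline{\Theta}_{\alpha}$ has full measure, while $\underline{\Theta}_{\alpha} \supseteq \Theta_{\alpha}$, it suffices to prove $\textup{dim}_{\mathcal{H}}(\underline{\Theta}_{\alpha}) \le 2/(\alpha+1)$ and $\textup{dim}_{\mathcal{H}}(\Theta_{\alpha}) \ge 2/(\alpha+1)$.

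For the upper bound on $\dim_{\mathcal{H}}(\underline{\Theta}_{\alpha})$: from Step 1, $\textup{A}_{\alpha}(\theta) > 0$ forces $q_{n+1} \ge c\, q_{n}^{\alpha}$ for infinitely many $n$, hence $\lvert \theta - p_{n}/q_{n}\rvert \le q_{n}^{-1}q_{n+1}^{-1} \le c^{-1} q_{n}^{-(\alpha+1)}$ infinitely often, so $\underline{\Theta}_{\alpha}$ is contained in the set of $\theta$ that are $\tau$-well-approximable with exponent $\tau = \alpha$ in the sense $\lvert\theta - p/q\rvert < c' q^{-(\alpha+1)}$ i.o.; by the Jarn\'{\i}k--Besicovitch theorem this set has Hausdorff dimension $2/(\alpha+1)$, and the constant $c'$ does not affect the dimension. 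For the matching lower bound on $\dim_{\mathcal{H}}(\Theta_{\alpha})$, I would use the \emph{exact} Jarn\'{\i}k set results of \cite{B:2003,B:2008,DK:2002} (via the continued fraction / Diophantine approximation results there): the set of $\theta$ for which $q_{n+1} \asymp q_{n}^{\alpha}$ \emph{exactly} along a subsequence (which forces $0 < \textup{A}_{\alpha}(\theta) < \infty$ after correctly balancing $a_{n+1}$ against $q_{n}^{\alpha-1}$, and simultaneously keeping $a_{n+1} \le q_{n}^{\alpha-1}$ for all large $n$ off that subsequence so that $\theta \in \overline{\Theta}_{\alpha}$ too) is known to have Hausdorff dimension exactly $2/(\alpha+1)$; a mass-distribution / Cantor-set construction on the continued fraction digits, choosing $a_{n+1}$ in a window around $q_{n}^{\alpha-1}$ along a sparse sequence of indices and $a_{n+1} \in \{1,2\}$ otherwise, makes this explicit and yields a subset of $\Theta_{\alpha}$ of the required dimension. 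Combining, $2/(\alpha+1) \le \dim_{\mathcal{H}}(\Theta_{\alpha}) \le \dim_{\mathcal{H}}(\underline{\Theta}_{\alpha}) \le 2/(\alpha+1)$, which closes the chain. Finally \Cref{prop:theta-1-theta} guarantees the construction can be carried out symmetrically in $\theta \leftrightarrow 1-\theta$ so there is no loss from the normalisation $a_{1} = 1$.

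The main obstacle is the lower-bound step: ensuring the Cantor-type subset of $\Theta_{\alpha}$ has full dimension $2/(\alpha+1)$ \emph{while simultaneously} enforcing both $\textup{A}_{\alpha}(\theta) < \infty$ (a constraint on \emph{all} sufficiently large digits) and $\textup{A}_{\alpha}(\theta) > 0$ (a constraint forcing large digits along a subsequence). The tension is that forcing $a_{n+1}$ to be comparable to $q_{n}^{\alpha-1}$ along infinitely many $n$ costs entropy, and one must verify --- via a local mass-distribution-principle estimate on the cylinder sets determined by $(a_{1},\dots,a_{n})$ --- that spreading these forced large digits sparsely enough still leaves the limiting dimension of the digit tree equal to $2/(\alpha+1)$. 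This is precisely the content of the exact Jarn\'{\i}k set theorems cited (\cite{B:2003,B:2008,DK:2002}), so the work is to massage the condition $0 < \textup{A}_{\alpha}(\theta) < \infty$ into exactly the hypothesis of one of those theorems; the remaining estimates (the two-sided bounds relating $a_{n}q_{n-1}^{1-\alpha}$ to approximation quality, and the Borel--Cantelli computation for the measure statement) are routine.
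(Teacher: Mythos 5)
Your overall strategy coincides with the paper's: the upper bound comes from the inclusion $\underline{\Theta}_{\alpha} \subseteq \bigcup_{n} \mathcal{J}_{\alpha+1}^{n}$ together with $\textup{dim}_{\mathcal{H}}(\mathcal{J}_{\alpha+1}^{c}) = 2/(\alpha+1)$ and countable stability of Hausdorff dimension, and the lower bound comes from $\textup{Exact}(\alpha+1) \subseteq \Theta_{\alpha}$ together with $\textup{dim}_{\mathcal{H}}(\textup{Exact}(\alpha+1)) = 2/(\alpha+1)$; in particular you do not need to build your own Cantor set, so the ``main obstacle'' you describe is already absorbed into \Cref{rm.:Jarnik_dimension}. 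Two points of comparison. First, for $\Lambda(\overline{\Theta}_{\alpha}) = 1$ you run a conditional Borel--Cantelli argument on the digits; the paper gets the same conclusion from what is already on the table, namely $[0,1]\setminus\mathcal{J}_{\alpha+1}^{1} \subseteq \overline{\Theta}_{\alpha}$ and $\Lambda(\mathcal{J}_{\alpha+1}^{1}) = 0$ because its Hausdorff dimension is $2/(\alpha+1) < 1$. Your route is workable, but note it needs the L\'evy extension of Borel--Cantelli (the events $\{a_{n+1} \geq q_{n}^{\alpha-1}\}$ are neither independent nor deterministic), so it is more machinery than necessary. Second, and more substantively, your dictionary between $A_{\alpha}(\theta)$ and approximation quality is stated only for the convergents $p_{n}/q_{n}$ (equivalently for $\lVert q_{n}\theta\rVert$), whereas $\mathcal{J}_{\beta}^{c}$ and $\textup{Exact}(\beta)$ in \Cref{defn:exact_Jarnik} quantify over \emph{all} rationals $p/q$. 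For the inclusion $\textup{Exact}(\alpha+1) \subseteq \underline{\Theta}_{\alpha}$ this matters: from $\lvert\theta - p/q\rvert \leq q^{-(\alpha+1)}$ for infinitely many arbitrary $p/q$ you must deduce $\limsup_{n} a_{n+1}q_{n}^{1-\alpha} > 0$, and this is exactly the step you defer as ``massaging''. The paper closes it with Khinchin's theorem on best approximations of the first kind: any such $p/q$ may be assumed to be a best approximation, hence of the form $[0;a_{1},\dots,a_{n-1},m]$ with $a_{n}/2 \leq m \leq a_{n}$, so its denominator is at least $q_{n}/2$ and one loses only a factor $2^{\alpha+1}$ in passing to the convergent $p_{n}/q_{n}$. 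Without some such reduction your lower-bound step is incomplete; with it, your argument and the paper's are essentially identical.
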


To obtain that $\textup{dim}_{\mathcal{H}}(\Theta_{\alpha}) = 2/(\alpha+1)$,  we show that $\Theta_{\alpha}$ is contained in a countable union of Jarn\'{\i}k sets  each with the same Hausdorff dimension, namely $2/(\alpha+1)$.  We also show that  the exact  Jarn\'{\i}k set  $\textup{Exact}(\alpha+1)$   is contained in $\Theta_{\alpha}$.  Jarn\'{\i}k sets and exact Jarn\'{\i}k sets are defined directly below.   From these observations and the results of \cite{B:2003,B:2008}, one may conclude that $\textup{dim}_{\mathcal{H}}(\Theta_{\alpha}) = 2/(\alpha+1)$.

\begin{definition}\label{defn:exact_Jarnik}
Given a strictly positive monotonically decreasing function $\psi \colon \mathbb{R} \to \mathbb{R}$, the set
\begin{align*}
\mathcal{J}_{\psi} \coloneqq \left\{ x   \in [0, 1] \colon \left\lvert x - \frac{p }{q } \right\rvert \leq \psi(q) \; \text{ for infinitely many} \; p,q\in\mathbb{N} \right\}
\end{align*}
is called the \emph{$\psi$-Jarn\'{\i}k set}.  When $\psi(y) = c  y^{-\beta}$, where $\beta > 2$ and $c > 0$, we denote the set $\mathcal{J}_{\psi}$ by $\mathcal{J}_{\beta}^{c}$ and define
\begin{align*}
\textup{Exact}(\beta) \coloneqq \mathcal{J}_{\beta}^{1} \setminus \bigcup_{n \geq 2, \, n \in \mathbb{N}} \mathcal{J}_{\beta}^{n/(n+1)}
\end{align*}
to be the set of real numbers that are approximable to rational numbers $p/q$ to order $q^{\beta}$ but no better.
\end{definition}

\begin{theorem}[\cite{B:2012,BBDV:2009,B:2003,B:2008}]\label{rm.:Jarnik_dimension}
For $\beta > 2$ and $c > 0$, we have that $\textup{dim}_{\mathcal{H}}(\mathcal{J}_{\beta}^{c}) = \textup{dim}_{\mathcal{H}}(\textup{Exact}(\beta)) = 2/\beta$.
\end{theorem}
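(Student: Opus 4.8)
The plan is to obtain both equalities by a matching upper and lower bound on the Hausdorff dimension: the upper bound is a routine covering argument valid for both sets at once, the lower bound for $\mathcal{J}_{\beta}^{c}$ follows from the classical Jarn\'{\i}k--Besicovitch theorem, and the lower bound for $\textup{Exact}(\beta)$ is the substantive point, requiring an explicit Cantor-type construction.

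\emph{Upper bound.} For every $Q \in \mathbb{N}$ one has the natural cover
\[
\mathcal{J}_{\beta}^{c} \subseteq \bigcup_{q \geq Q} \bigcup_{p = 0}^{q} \left[ \tfrac{p}{q} - c q^{-\beta},\ \tfrac{p}{q} + c q^{-\beta} \right]
\]
by intervals of diameter $2 c q^{-\beta}$, where for $q$ large the range of $p$ meeting $[0,1]$ is indeed contained in $\{0, 1, \dots, q\}$. For $s = 2/\beta + \epsilon$ with $\epsilon > 0$, the corresponding $s$-dimensional Hausdorff sum is bounded, up to a multiplicative constant, by
\[
\sum_{q \geq Q} (q+1)\,(2 c q^{-\beta})^{s} \asymp \sum_{q \geq Q} q^{\,1 - \beta s} = \sum_{q \geq Q} q^{-1 - \beta \epsilon},
\]
which converges and tends to $0$ as $Q \to \infty$. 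Hence $\mathcal{H}^{s}(\mathcal{J}_{\beta}^{c}) = 0$ for every $s > 2/\beta$, so $\textup{dim}_{\mathcal{H}}(\mathcal{J}_{\beta}^{c}) \leq 2/\beta$; and since $\textup{Exact}(\beta) \subseteq \mathcal{J}_{\beta}^{1}$ by \Cref{defn:exact_Jarnik}, also $\textup{dim}_{\mathcal{H}}(\textup{Exact}(\beta)) \leq 2/\beta$.

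\emph{Lower bound for $\mathcal{J}_{\beta}^{c}$.} Dropping the coprimality condition only enlarges the set, so $\mathcal{J}_{\beta}^{c}$ contains the classical set of real numbers approximable by infinitely many reduced fractions $p/q$ with $\lvert x - p/q \rvert \leq c q^{-\beta}$; by the Jarn\'{\i}k--Besicovitch theorem this set has Hausdorff dimension $2/\beta$ for every fixed $c > 0$, the constant $c$ affecting only the Hausdorff measure at the critical exponent, see \cite{BBDV:2009,B:2012}. Therefore $\textup{dim}_{\mathcal{H}}(\mathcal{J}_{\beta}^{c}) \geq 2/\beta$, and equality follows.

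\emph{Lower bound for $\textup{Exact}(\beta)$.} Here I would pass to continued fractions, using $\lvert x - p_{n}/q_{n} \rvert \asymp (q_{n} q_{n+1})^{-1}$ together with $q_{n+1} = a_{n+1} q_{n} + q_{n-1}$, so that membership $x \in \textup{Exact}(\beta)$ amounts to $a_{n+1} \asymp q_{n}^{\beta - 2}$ along some sparse sequence $n_{1} < n_{2} < \cdots$, while the approximation is never better than order $q^{-\beta}$ and never of exact order $q^{-\beta}$ with an arbitrarily small constant. Concretely, one builds a Cantor set $E = \bigcap_{k \geq 0} E_{k} \subseteq [0,1]$ consisting of numbers $[0; a_{1}, a_{2}, \dots]$ in which, along a sufficiently sparse sequence $(n_{k})$, the entry $a_{n_{k}+1}$ is free to range over a block of integers comparable to $q_{n_{k}}^{\beta - 2}$, while all remaining partial quotients are confined to $\{1, 2\}$; the latter constraint forces $E \subseteq \mathcal{J}_{\beta}^{1} \setminus \bigcup_{n \geq 2} \mathcal{J}_{\beta}^{n/(n+1)} = \textup{Exact}(\beta)$. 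Equipping $E$ with the natural mass distribution $\mu$ obtained by subdividing mass uniformly among the admissible continuations at each level, one estimates $\mu(B(x,r)) \leq C\, r^{2/\beta - \epsilon}$ using the standard length estimates for continued-fraction cylinders (the bounded entries off $(n_{k})$ keep those lengths controlled, and the free blocks at the $n_{k}$ contribute the exponent $2/\beta$ in the limit), and concludes by the mass distribution principle that $\textup{dim}_{\mathcal{H}}(E) \geq 2/\beta - \epsilon$ for every $\epsilon > 0$. This is the construction of \cite{B:2003,B:2008}, the continued-fraction estimates being those of \cite{DK:2002,K:1997}. The main obstacle is to calibrate the sparsity of $(n_{k})$ against the sizes of the free blocks so that simultaneously (i) both the no-better-order and the exact-order conditions hold on all of $E$, and (ii) the insertion of the large partial quotients $a_{n_{k}+1}$ does not collapse the dimension below $2/\beta$; this balancing is precisely the content of the exact-approximation results of \cite{B:2003,B:2008}.
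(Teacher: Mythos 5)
The paper does not prove this statement at all: it is imported verbatim from the literature (Jarn\'{\i}k--Besicovitch for $\mathcal{J}_{\beta}^{c}$, Bugeaud \cite{B:2003,B:2008} for $\textup{Exact}(\beta)$), so there is no internal proof to compare against. Judged on its own, your outline follows the same route as those sources. The upper bound is complete and correct: the limsup structure of $\mathcal{J}_{\beta}^{c}$ gives the tail covers, and $\sum_{q \geq Q} q^{1 - \beta s}$ converges precisely for $s > 2/\beta$; together with $\textup{Exact}(\beta) \subseteq \mathcal{J}_{\beta}^{1}$ this handles both sets. The lower bound for $\mathcal{J}_{\beta}^{c}$ via Jarn\'{\i}k--Besicovitch is also fine, since the dimension of $\mathcal{J}_{\psi}$ for $\psi(q) = c q^{-\beta}$ depends only on $\beta$ and not on $c$.

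The one place where your argument is a sketch rather than a proof is the lower bound for $\textup{Exact}(\beta)$, which is the entire content of the theorem beyond the classical case. Two points there still need to be carried out. First, the claim that confining the off-sequence partial quotients to $\{1,2\}$ and taking $a_{n_{k}+1} \asymp q_{n_{k}}^{\beta - 2}$ ``forces'' $E \subseteq \mathcal{J}_{\beta}^{1} \setminus \bigcup_{n \geq 2} \mathcal{J}_{\beta}^{n/(n+1)}$ requires pinning down constants: one must choose the free blocks so that $\lvert x - p_{n_{k}}/q_{n_{k}} \rvert \leq q_{n_{k}}^{-\beta}$ while, for every fixed $n \geq 2$, eventually $\lvert x - p/q \rvert > \tfrac{n}{n+1} q^{-\beta}$ for \emph{all} rationals $p/q$ --- this needs the reduction to best approximations (convergents and intermediate fractions) plus the two-sided bound $\bigl( (a_{m+1}+2) q_{m}^{2} \bigr)^{-1} \leq \lvert x - p_{m}/q_{m} \rvert \leq \bigl( a_{m+1} q_{m}^{2} \bigr)^{-1}$, and since the sandwich $1 \leq \liminf_{m} q_{m}^{\beta} \lvert x - p_{m}/q_{m}\rvert \leq 1$ leaves no slack, the calibration is genuinely delicate (Bugeaud's construction in fact perturbs the target constant to make room). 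Second, the mass-distribution estimate $\mu(B(x,r)) \leq C r^{2/\beta - \epsilon}$ must be checked at \emph{all} scales $r$, including those comparable to the cylinders immediately before and after a free block, where the large partial quotient shrinks cylinder lengths by a factor $\asymp q_{n_{k}}^{2 - \beta}$ relative to their parent; this is exactly where the exponent $2/\beta$ emerges and where a careless choice of sparsity collapses the dimension. You correctly identify this balancing as the crux and attribute it to \cite{B:2003,B:2008}, so the proposal is an accurate roadmap, but as written it proves only $\textup{dim}_{\mathcal{H}}(\mathcal{J}_{\beta}^{c}) = 2/\beta$ and defers the $\textup{Exact}(\beta)$ half to the same references the paper itself cites.
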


Notice, by \Cref{prop:theta-1-theta} and \Cref{rmk:eta}, it is sufficient to prove the above results (\Cref{thm:thm2,thm:thm2.5,thm:thm1,thm:thm3}) for $\theta \in [0, 1/2]$, and so, from here on, we assume that $\theta = [0; a_{1}+1, a_{2}, \dots ]  \in [0, 1/2]$ with $a_{n} \in \mathbb{N}$ and $n \in \mathbb{N}$.

\section{Preliminaries}

\subsection{Aperiodic order}\label{sec:Prelim_AO}

In the following, let $\tau$ and $\rho$  denote the semigroup homomorphisms on $\{ 0, 1\}^{*}$ determined by
$\tau(0) \coloneqq (0)$,
$\tau(1) \coloneqq (1,0)$,
$\rho(0) \coloneqq (0, 1)$,
and
$\rho(1) \coloneqq (1)$.
For $\theta = [0; a_{1} + 1, a_{2}, \dots] \in [0, 1/2]$ irrational, we set $\mathcal{R}_{0} = \mathcal{R}_{0}(\theta) \coloneqq (0)$, $\mathcal{L}_{0} = \mathcal{L}_{0}(\theta) \coloneqq (1)$ and, for $k \in \mathbb{N}$,
\begin{align*}
\mathcal{R}_{k} = \mathcal{R}_{k}(\theta) \coloneqq \tau^{a_{1}} \rho^{a_{2}} \tau^{a_{3}} \rho^{a_{4}} \cdots \tau^{a_{2k - 1}} \rho^{a_{2k}}(0)
\quad \text{and} \quad
\mathcal{L}_{k} = \mathcal{L}_{k}(\theta) \coloneqq \tau^{a_{1}} \rho^{a_{2}} \tau^{a_{3}} \rho^{a_{4}} \cdots \tau^{a_{2k - 1}} \rho^{a_{2k}}(1).
\end{align*}

\begin{theorem}[\cite{AR:1991}]\label{thm:AR-1991}
Let $X$ denote a Sturmian subshift of slope $\theta$.  Let $x$, $y$ denote the unique infinite words  with $x\lvert_{\lvert \mathcal{R}_{k} \rvert} = \mathcal{R}_{k}$ and $y\lvert_{\lvert \mathcal{L}_{k} \rvert} = \mathcal{L}_{k}$, for all $k \in \mathbb{N}$.   The words $x$ and $y$ belong to $X$, and hence, by the minimality of a Sturmian subshift, $X = \Omega(x) = \Omega(y)$.
\end{theorem}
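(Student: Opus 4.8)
The plan is to realise the words $x$ and $y$ explicitly as Sturmian words of slope $\theta$ assembled from the substitutions $\tau$ and $\rho$, to place them inside $X$ by a closedness argument, and then to invoke minimality for the last equalities. First I would fix notation: put $\Phi_{0} \coloneqq \mathrm{id}$ and, for $k \geq 1$, $\Phi_{k} \coloneqq \tau^{a_{1}}\rho^{a_{2}}\tau^{a_{3}}\rho^{a_{4}}\cdots\tau^{a_{2k-1}}\rho^{a_{2k}}$, so that $\mathcal{R}_{k} = \Phi_{k}(0)$, $\mathcal{L}_{k} = \Phi_{k}(1)$ and $\Phi_{k+1} = \Phi_{k}\circ\tau^{a_{2k+1}}\rho^{a_{2k+2}}$. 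A one-line induction on the exponents from the defining rules of $\tau$ and $\rho$ gives $\tau^{n}(0) = (0)$, $\tau^{n}(1) = (1)(0)^{n}$, $\rho^{m}(0) = (0)(1)^{m}$ and $\rho^{m}(1) = (1)$ for all $n,m\in\mathbb{N}_{0}$; in particular $\tau^{a_{2k+1}}\rho^{a_{2k+2}}(0)$ begins with $0$ and $\tau^{a_{2k+1}}\rho^{a_{2k+2}}(1)$ begins with $1$. Since a non-erasing semigroup homomorphism maps a prefix of a word to a prefix of its image, applying $\Phi_{k}$ shows that $\mathcal{R}_{k}$ is a proper prefix of $\mathcal{R}_{k+1}$ and $\mathcal{L}_{k}$ a proper prefix of $\mathcal{L}_{k+1}$ (in fact $\mathcal{L}_{k+1} = \mathcal{L}_{k}\mathcal{R}_{k}^{a_{2k+1}}$ and $\mathcal{R}_{k+1} = \mathcal{R}_{k}\mathcal{L}_{k+1}^{a_{2k+2}}$). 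Hence $(\lvert\mathcal{R}_{k}\rvert)_{k}$ and $(\lvert\mathcal{L}_{k}\rvert)_{k}$ are strictly increasing sequences of positive integers, so they diverge, and therefore $x \coloneqq \lim_{k}\mathcal{R}_{k}$ and $y \coloneqq \lim_{k}\mathcal{L}_{k}$ are well-defined infinite words, and they are the unique infinite words with $x\lvert_{\lvert\mathcal{R}_{k}\rvert} = \mathcal{R}_{k}$ and $y\lvert_{\lvert\mathcal{L}_{k}\rvert} = \mathcal{L}_{k}$ for every $k$; this settles the existence and uniqueness implicit in the statement.

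The heart of the matter is to show that $x,y\in X$. Here I would first record that $\tau$ and $\rho$ are Sturmian morphisms: counting letter frequencies shows that $\tau$ sends an infinite word whose letter $1$ has frequency $\gamma$ to one of frequency $\gamma/(1+\gamma)$, while $\rho = \eta\tau\eta$, with $\eta$ the letter-exchange homomorphism of \Cref{rmk:eta}, sends frequency $\gamma$ to $1/(2-\gamma)$; combined with the classical fact that $\tau$ and $\rho$ map Sturmian words to Sturmian words, these frequency identities translate — through the continued fraction formalism of \Cref{sec:defCF} together with \eqref{eq:1-theta1} — into the statement that $\tau^{n}$ raises the leading partial quotient by $n$, with the dual statement for $\rho$. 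Unwinding this along $\theta = [0; a_{1}+1, a_{2}, a_{3}, \dots]$, one obtains that $\Phi_{k}$ carries every Sturmian word of slope $\theta_{k} \coloneqq [0; a_{2k+1}+1, a_{2k+2}, a_{2k+3}, \dots]$ to a Sturmian word of slope $\theta$, hence into $X$ (all Sturmian words of a fixed irrational slope having the same factor set). Now fix $k$: since both letters occur in the language of the Sturmian subshift $X_{\theta_{k}}$ of slope $\theta_{k}$ and $X_{\theta_{k}}$ is shift-invariant, there exist $u_{k}, v_{k}\in X_{\theta_{k}}$ beginning with $0$ and with $1$, respectively, and then $\Phi_{k}(u_{k})$ and $\Phi_{k}(v_{k})$ are elements of $X$ beginning with $\Phi_{k}(0) = \mathcal{R}_{k}$ and $\Phi_{k}(1) = \mathcal{L}_{k}$, respectively. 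Letting $k\to\infty$, and noting that for $j\geq k$ the words $\Phi_{j}(u_{j})$ and $\Phi_{j}(v_{j})$ still begin with $\mathcal{R}_{k}$ and $\mathcal{L}_{k}$ (because $\mathcal{R}_{k}$ is a prefix of $\mathcal{R}_{j}$ and $\mathcal{L}_{k}$ a prefix of $\mathcal{L}_{j}$), these sequences converge in $\{0,1\}^{\mathbb{N}}$ to $x$ and $y$, respectively; as $X$ is closed, $x,y\in X$.

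Finally, a Sturmian subshift is minimal, so the forward-orbit closure of any of its points is the whole subshift; applying this to $x$ and to $y$ yields $\Omega(x) = \Omega(y) = X$. I expect the main obstacle to be the continued fraction bookkeeping in the second step — verifying that $\tau^{a_{1}}\rho^{a_{2}}\cdots\tau^{a_{2k-1}}\rho^{a_{2k}}$ is exactly the partial product produced by the Gauss (continued fraction) algorithm applied to $\theta$, so that the tail slope $\theta_{k}$ is read off correctly. Keeping careful track of the ``$+1$'' in the leading partial quotient of $\theta$ and of the alternation of $\tau$ on odd and $\rho$ on even indices is where the real care is needed; the remaining ingredients (that $\tau$ and $\rho$ are Sturmian morphisms, and that two Sturmian words of the same irrational slope share their factor set) are classical.
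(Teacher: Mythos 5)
The paper offers no proof of this statement at all: it is quoted directly from Arnoux--Rauzy \cite{AR:1991}, so there is no internal argument to compare yours against. Your proof is correct and follows the standard route through Sturmian (standard) morphisms. The first step (that $\mathcal{R}_{k}$ and $\mathcal{L}_{k}$ are proper prefixes of $\mathcal{R}_{k+1}$ and $\mathcal{L}_{k+1}$, so $x$ and $y$ exist and are unique) agrees with the recursions of \Cref{prop:blackboard}, and the closedness-plus-minimality endgame is fine. The continued fraction bookkeeping you flag as the delicate point does close up: writing $\gamma\mapsto\gamma/(1+\gamma)$ for the action of $\tau$ on the frequency of the letter $1$ and $\rho=\eta\tau\eta$ with $\eta\colon\gamma\mapsto 1-\gamma$ as in \Cref{rmk:eta} and \eqref{eq:1-theta1}, one checks that $\rho^{a_{2k}}$ carries $[0;a_{2k+1}+1,a_{2k+2},\dots]$ to $[0;1,a_{2k},a_{2k+1},\dots]$ and then $\tau^{a_{2k-1}}$ carries this to $[0;a_{2k-1}+1,a_{2k},a_{2k+1},\dots]$, so the inner block $\tau^{a_{2k-1}}\rho^{a_{2k}}$ sends the tail slope $\theta_{k}$ to $\theta_{k-1}$ and the induction terminates at $\theta_{0}=\theta$; the alternation of $\tau$ on odd and $\rho$ on even indices and the leading ``$+1$'' are exactly absorbed by this two-step cycle. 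The only ingredients you leave uncited are genuinely classical (that $\tau$ and $\rho$ preserve Sturmianity, and that two Sturmian words of the same irrational slope have the same factor set and hence generate the same subshift), and both are available in \cite{Lot:2002} or \cite{BFMS:2002}, so I see no gap.
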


\begin{proposition}\label{prop:blackboard}
Let $\theta = [0; a_{1} + 1, a_{2}, \dots] \in [0, 1/2]$ be an irrational number. For all $k \in \mathbb{N}$ we have
\begin{align*}
\begin{aligned}
\lvert R_{k} \rvert &= q_{2k},\\[0.25em]
\mathcal{R}_{k} &= \mathcal{R}_{k-1} \underbrace{\mathcal{L}_{k} \dots \mathcal{L}_{k}}_{a_{2k}},
\end{aligned}
\qquad \qquad
\begin{aligned} 
\lvert \mathcal{L}_{k} \rvert &= q_{2k - 1},\\[0.25em]
\mathcal{L}_{k} &= \mathcal{L}_{k-1} \underbrace{\mathcal{R}_{k-1} \dots \mathcal{R}_{k-1}}_{a_{2k-1}}.
\end{aligned}
\end{align*}
\end{proposition}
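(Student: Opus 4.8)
The plan is to prove all four assertions simultaneously by induction on $k$, exploiting the recursive structure of the substitution homomorphisms $\tau$ and $\rho$ together with the continued fraction recursion for the $q_n$. First I would set up the base case $k=1$ by a direct computation: $\mathcal{R}_1 = \tau^{a_1}\rho^{a_2}(0)$ and $\mathcal{L}_1 = \tau^{a_1}\rho^{a_2}(1)$, and one checks directly that $\rho^{a_2}(0) = (0,\underbrace{1,\dots,1}_{a_2})$, $\rho^{a_2}(1) = (1)$, and then applying $\tau^{a_1}$ gives the claimed words; comparing lengths against $q_1 = a_1+1$ and $q_2 = a_2 q_1 + q_0 = a_2(a_1+1)+1$ (recall we are in the normalisation $\theta = [0;a_1+1,a_2,\dots]$, so the first partial quotient of $\theta$ is $a_1+1$, hence $q_0=1$, $q_1=a_1+1$) establishes the base case. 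The slight bookkeeping subtlety here — which I expect to be the only genuinely delicate point — is keeping the index shift between the ``$a_i$'' appearing in the definition of $\mathcal{R}_k,\mathcal{L}_k$ and the partial quotients of $\theta$ consistent, i.e.\ that $|\mathcal{R}_k| = q_{2k}$ and $|\mathcal{L}_k| = q_{2k-1}$ with $q_n = q_n(\theta)$ and $\theta$ in the given normalised form.

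For the inductive step, I would observe that
\begin{align*}
\mathcal{R}_k = \tau^{a_1}\rho^{a_2}\cdots\tau^{a_{2k-1}}\rho^{a_{2k}}(0) = \Phi_{k-1}\bigl(\tau^{a_{2k-1}}\rho^{a_{2k}}(0)\bigr),
\end{align*}
where $\Phi_{k-1} \coloneqq \tau^{a_1}\rho^{a_2}\cdots\tau^{a_{2k-3}}\rho^{a_{2k-2}}$ is the composite homomorphism producing $\mathcal{R}_{k-1}$ and $\mathcal{L}_{k-1}$ from $(0)$ and $(1)$ respectively. A direct combinatorial computation gives $\tau^{a_{2k-1}}\rho^{a_{2k}}(0) = (0)\underbrace{(1)\cdots(1)}_{a_{2k}}$ reorganised appropriately after applying $\tau^{a_{2k-1}}$; more precisely one shows $\rho^{a_{2k}}(0) = (0,1,\dots,1)$ and then $\tau^{a_{2k-1}}$ sends $(0)\mapsto(0)$ and $(1)\mapsto\tau^{a_{2k-1}}(1)$, and one identifies $\Phi_{k-1}(0) = \mathcal{R}_{k-1}$, $\Phi_{k-1}(\tau^{a_{2k-1}}(1)) = \mathcal{L}_k$ up to the bracketing. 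Since $\Phi_{k-1}$ is a semigroup homomorphism, it distributes over concatenation, yielding $\mathcal{R}_k = \mathcal{R}_{k-1}\,\mathcal{L}_k^{a_{2k}}$; an entirely symmetric argument with the roles of $\tau,\rho$ and $0,1$ exchanged gives $\mathcal{L}_k = \mathcal{L}_{k-1}\,\mathcal{R}_{k-1}^{a_{2k-1}}$. The length identities then follow immediately: $|\mathcal{R}_k| = |\mathcal{R}_{k-1}| + a_{2k}|\mathcal{L}_k|$ and, using $|\mathcal{L}_k| = |\mathcal{L}_{k-1}| + a_{2k-1}|\mathcal{R}_{k-1}|$ together with the inductive hypotheses $|\mathcal{R}_{k-1}| = q_{2k-2}$, $|\mathcal{L}_{k-1}| = q_{2k-3}$, one reads off $|\mathcal{L}_k| = q_{2k-3} + a_{2k-1}q_{2k-2} = q_{2k-1}$ and then $|\mathcal{R}_k| = q_{2k-2} + a_{2k}q_{2k-1} = q_{2k}$, matching the continued fraction recursion exactly.

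The main obstacle, as indicated, is not conceptual but notational: one must be careful that the recursive identities $\mathcal{R}_k = \mathcal{R}_{k-1}\mathcal{L}_k^{a_{2k}}$ and $\mathcal{L}_k = \mathcal{L}_{k-1}\mathcal{R}_{k-1}^{a_{2k-1}}$ are genuinely forced by the commutation/factorisation behaviour of $\tau^{a}$ and $\rho^{a}$ on the two-letter alphabet — specifically that $\rho(0)=(0,1)$, $\rho(1)=(1)$ forces $\rho^{a}(0) = (0)(1)^{a}$ and leaves $(1)$ fixed, and dually for $\tau$. Once these elementary facts are isolated, pushing them through the homomorphism $\Phi_{k-1}$ is automatic. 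Alternatively, one could cite \Cref{thm:AR-1991} and the standard correspondence between the Ostrowski/continued-fraction representation and the standard factorisation of Sturmian words (as in \cite{Lot:2002}) to get the length statements for free, but the self-contained induction above is cleaner and keeps the paper independent of that machinery.
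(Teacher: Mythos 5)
Your proposal is correct and is precisely the argument the paper has in mind: the paper's proof of \Cref{prop:blackboard} consists of the single sentence ``An inductive argument together with the definitions of $\mathcal{R}_{k}$ and $\mathcal{L}_{k}$ yields the required result,'' and your induction — using $\rho^{a}(0)=(0)(1)^{a}$, $\tau^{a}(1)=(1)(0)^{a}$, pushing these through the homomorphism $\Phi_{k-1}$, and matching the lengths against the recursion $q_{n}=a_{n}q_{n-1}+q_{n-2}$ — is exactly that argument spelled out. The index bookkeeping you flag (the leading partial quotient being $a_{1}+1$, so $q_{1}=a_{1}+1$) is handled correctly in your base case.
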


\begin{proof}
An inductive argument together with the definitions of $\mathcal{R}_{k}$ and $\mathcal{L}_{k}$ yields the required result.
\end{proof}

\begin{corollary}\label{for:rightspecial}
Let $\theta = [0; a_{1} + 1, a_{2}, \dots ] \in [0, 1/2]$ be irrational,  $k \in \mathbb{N}_{0}$,  $n \in \{ 0, 1, \dots, a_{2(k+1)} - 1 \}$ and $m \in \{ 0, 1, \dots, a_{2(k+1)-1} - 1 \}$.  The words $\displaystyle \mathcal{R}_{k} \underbrace{\mathcal{L}_{k+1} \dots \mathcal{L}_{k+1}}_{n}$ and $\displaystyle \mathcal{L}_{k + 1} \underbrace{\mathcal{R}_{k} \dots \mathcal{R}_{k}}_{m}$ are right special.
\end{corollary}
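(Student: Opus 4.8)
The plan is to prove that both families of words are right special by an induction on $k$, built on three ingredients: the recursive identities for $\mathcal{R}_k$ and $\mathcal{L}_k$ from \Cref{prop:blackboard}; the fact, from \Cref{thm:AR-1991}, that each $\mathcal{R}_j$ and each $\mathcal{L}_j$ is a prefix of an element of $X$, so that every factor of any $\mathcal{R}_j$ or $\mathcal{L}_j$ belongs to $\mathcal{L}(X)$; and \Cref{rmk:BFMS-2002}, which supplies a unique right special factor of each length and tells us that every nonempty suffix of a right special word is again right special.

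A first, routine step is to show by induction, from $\mathcal{R}_0=(0)$, $\mathcal{L}_0=(1)$ and the identities $\mathcal{R}_k=\mathcal{R}_{k-1}\mathcal{L}_k^{\,a_{2k}}$ and $\mathcal{L}_k=\mathcal{L}_{k-1}\mathcal{R}_{k-1}^{\,a_{2k-1}}$, that every $\mathcal{R}_k$ begins and ends with the letter $0$ and every $\mathcal{L}_k$ begins with the letter $1$. This yields one of the two required one-letter extensions immediately. For $w=\mathcal{R}_k\mathcal{L}_{k+1}^{\,n}$ with $n\le a_{2(k+1)}-1$: since $\mathcal{R}_k\mathcal{L}_{k+1}^{\,n+1}$ is a prefix of $\mathcal{R}_{k+1}=\mathcal{R}_k\mathcal{L}_{k+1}^{\,a_{2(k+1)}}$ and $\mathcal{L}_{k+1}$ begins with $1$, the word $w$ is immediately followed by $1$ inside $\mathcal{R}_{k+1}$, so $w(1)\in\mathcal{L}(X)$. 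Symmetrically, expanding $\mathcal{L}_{k+2}=\mathcal{L}_{k+1}\mathcal{R}_{k+1}^{\,a_{2(k+1)+1}}=\mathcal{L}_{k+1}\,\mathcal{R}_k\mathcal{L}_{k+1}^{\,a_{2(k+1)}}\cdots$, one locates an occurrence of $\mathcal{L}_{k+1}\mathcal{R}_k^{\,m}$ that is followed by a block beginning with $\mathcal{L}_k$, hence by $1$, giving the analogous extension for the second family.

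For the remaining extension one must produce a second occurrence of the word, this time followed by $0$. The idea is to expand a deep enough standard word --- say $\mathcal{R}_{k+2}$ or $\mathcal{L}_{k+2}$ --- completely into the two blocks $\mathcal{R}_k$ and $\mathcal{L}_{k+1}$ by iterating \Cref{prop:blackboard}, to describe exactly how these blocks interleave (how many consecutive copies of each can appear, and which block follows such a run), and then to single out, for every admissible $n$ (respectively $m$), an occurrence of $\mathcal{R}_k\mathcal{L}_{k+1}^{\,n}$ (respectively $\mathcal{L}_{k+1}\mathcal{R}_k^{\,m}$) that is immediately followed by a block beginning with $\mathcal{R}_k$, hence by $0$; the precise upper bounds $a_{2(k+1)}-1$ and $a_{2(k+1)-1}-1$ should be exactly what makes such an occurrence available. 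Combined with the first extension this shows the word is right special. Alternatively, one can run the whole induction through \Cref{rmk:BFMS-2002}, checking that the longest word of each prescribed shape is a nonempty suffix of a word already shown to be right special, so that all the shorter ones inherit the property.

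I expect this last step to be the main obstacle: carrying out the block decomposition of a deep standard word carefully enough to produce the required ``followed by $0$'' occurrence for every admissible $n$ and $m$ --- and to see that the stated ranges cannot be enlarged --- is where the fine combinatorics of Sturmian standard words, together with the extremal bounds on how large a power of a given block can occur in $\mathcal{L}(X)$, really has to be used.
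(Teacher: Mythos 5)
Your argument for the extension by the letter $1$ is fine, but the half you yourself flag as ``the main obstacle'' --- producing, for every admissible $n$, an occurrence of $\mathcal{R}_k\mathcal{L}_{k+1}^{\,n}$ followed by the letter $0$ --- is not carried out, and the route you propose for it would fail. If you decompose a deep standard word into the blocks $\mathcal{R}_k$ and $\mathcal{L}_{k+1}$, every maximal run of consecutive $\mathcal{L}_{k+1}$'s lying between two $\mathcal{R}_k$'s has length $a_{2(k+1)}$ or $a_{2(k+1)}+1$; hence a block-aligned occurrence of $\mathcal{R}_k\mathcal{L}_{k+1}^{\,n}$ with $n\leq a_{2(k+1)}-1$ is \emph{always} followed by a further $\mathcal{L}_{k+1}$, i.e.\ by the letter $1$. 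No amount of careful bookkeeping of the interleaving will produce an aligned occurrence followed by a block beginning with $\mathcal{R}_k$. (The sketch for the second family has the same problem, compounded by the fact that for $m\geq 2$ the word $\mathcal{L}_{k+1}\mathcal{R}_k^{\,m}=\mathcal{L}_k\mathcal{R}_k^{\,a_{2(k+1)-1}+m}$ would need a run of more than $a_{2(k+1)-1}+1$ copies of $\mathcal{R}_k$; the paper's own proof in fact exhibits the words $\mathcal{L}_k\mathcal{R}_k^{\,m}$, the suffixes of $\mathcal{L}_{k+1}$, which is the form used later in \Cref{cor:branching_points}.)

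The occurrences you need are the non-aligned ones, and they come from a single short observation that replaces your entire block analysis: $\mathcal{L}_{k+1}=\mathcal{L}_k\mathcal{R}_k^{\,a_{2(k+1)-1}}$ ends in $\mathcal{R}_k$, so the suffix of $\mathcal{R}_{k+1}=\mathcal{R}_k\mathcal{L}_{k+1}^{\,a_{2(k+1)}}$ of length $\lvert\mathcal{R}_k\rvert+n\lvert\mathcal{L}_{k+1}\rvert$ is exactly $\mathcal{R}_k\mathcal{L}_{k+1}^{\,n}$; similarly $\mathcal{R}_k$ ends in $\mathcal{L}_k$, so $\mathcal{L}_k\mathcal{R}_k^{\,m}$ is a suffix of $\mathcal{L}_{k+1}$. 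This is the paper's proof: one checks that $\mathcal{R}_{k+1}$ and $\mathcal{L}_{k+1}$ are themselves right special (via \Cref{prop:blackboard} and \Cref{thm:AR-1991}), writes the words in question as explicit shifts $\sigma^{j}(\mathcal{R}_{k+1})$ and $\sigma^{j'}(\mathcal{L}_{k+1})$, and invokes \Cref{rmk:BFMS-2002}, which says every nonempty suffix of a right special word is right special. You name this route as an ``alternative'' in your last sentence but neither verify the suffix identities nor the right specialness of $\mathcal{R}_{k+1}$ and $\mathcal{L}_{k+1}$, so as written the proposal establishes only one of the two required one-letter extensions.
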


\begin{proof}
An application of \Cref{prop:blackboard} and \Cref{thm:AR-1991} yields that $\mathcal{R}_{k}$ and $\mathcal{L}_{k+1}$ are right special words for all $k \in \mathbb{N}_{0}$.  For all $n \in \{ 1, \dots, a_{2(k+1)} - 1 \}$ and $m \in \{ 1, \dots, a_{2(k+1)-1}-1 \}$, we observe
\begin{align*}
\sigma^{\lvert \mathcal{L}_{k} \rvert + a_{2(k+1)-1} \lvert \mathcal{R}_{k} \rvert + (a_{2(k+1)} - (n + 1))\lvert \mathcal{L}_{k+1}\rvert}(\mathcal{R}_{k+1})
&= \mathcal{R}_{k} \underbrace{\mathcal{L}_{k+1} \dots \mathcal{L}_{k+1}}_{n},\\[1em]
\sigma^{\lvert \mathcal{R}_{k-1} \rvert + a_{2k} \lvert \mathcal{L}_{k} \rvert + (a_{2(k+1)-1} - (m + 1)) \lvert \mathcal{R}_{k} \rvert}(\mathcal{L}_{k+1})
&= \mathcal{L}_{k} \underbrace{\mathcal{R}_{k} \dots \mathcal{R}_{k}}_{m}.
\end{align*}
The above in tandem with \Cref{rmk:BFMS-2002} and \Cref{prop:blackboard} yields the result.
\end{proof}

\begin{corollary}\label{cor:branching_points}
Let $X$ be a Sturmian subshift of slope $\theta = [0; a_{1} + 1, a_{2}, \dots ] \in [0, 1/2]$.  If $x, y \in X$ are the unique infinite words such that $x\lvert_{\lvert \mathcal{R}_{m} \rvert} = \mathcal{R}_{m}$ and $y\lvert_{\lvert \mathcal{L}_{m} \rvert} = \mathcal{L}_{m}$, for all $m \in \mathbb{N}$, then
\begin{enumerate}[label=(\arabic*),leftmargin=2.5em]
\item\label{enumerate:codition_1_structure} $\overline{b}_{n}(x) = 1$ if and only if $n = j q_{2 k - 1} + q_{2 k - 2}$ for some $k \in \mathbb{N}$ and $j \in \{ 0, 1, \dots, a_{2k} - 1 \}$, and 
\item\label{enumerate:codition_2_structure} $\overline{b}_{m}(y) = 1$ if and only if $m = i q_{2 l} + q_{2 l - 1}$ for some $l \in \mathbb{N}$ and $i \in \{ 0, 1, \dots, a_{2k + 1}-1 \}$.
\end{enumerate}
\end{corollary}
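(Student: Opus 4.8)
The plan is to prove Corollary~\ref{cor:branching_points} by unwinding the recursive structure of the words $\mathcal{R}_{k}$ and $\mathcal{L}_{k}$ provided by \Cref{prop:blackboard}, together with the characterisation of right special factors in \Cref{rmk:BFMS-2002} and \Cref{for:rightspecial}. Recall $\overline{b}_{n}(x) = 1$ precisely when $x|_{n}$ is a right special word, so the task is purely combinatorial: identify exactly those prefix-lengths $n$ of $x$ (respectively $y$) at which the prefix is right special.

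First I would establish the key structural fact that a Sturmian subshift has a \emph{unique} right special word of each length (\Cref{rmk:BFMS-2002}); consequently, a prefix $x|_{n}$ is right special if and only if it equals the unique right special word of length $n$, and the same for $y|_{m}$. By \Cref{for:rightspecial}, the words $\mathcal{R}_{k}\mathcal{L}_{k+1}^{n}$ for $0 \le n \le a_{2(k+1)}-1$ and $\mathcal{L}_{k+1}\mathcal{R}_{k}^{m}$ for $0 \le m \le a_{2(k+1)-1}-1$ are right special; using $|\mathcal{R}_{k}| = q_{2k}$, $|\mathcal{L}_{k}| = q_{2k-1}$ from \Cref{prop:blackboard}, their lengths are $q_{2k} + n q_{2k+1}$ and $q_{2k+1} + m q_{2k}$ respectively. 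I would then check that these two families, as $k$ ranges over $\mathbb{N}_{0}$, enumerate each positive integer exactly once (this is exactly the standard fact that every integer has a unique representation via the $q_{j}$-Ostrowski-type expansion at the ``top level''), so that for each $n$ there is precisely one right special word, namely one of these. Matching the index conventions, the right special prefixes of $x$ (which begins $\mathcal{R}_{k}$ and hence has $\mathcal{R}_{k-1}\mathcal{L}_{k}^{j}$ as prefixes) occur at lengths $j q_{2k-1} + q_{2k-2}$, giving part~\ref{enumerate:codition_1_structure}; symmetrically for $y$ one gets lengths $i q_{2l} + q_{2l-1}$, giving part~\ref{enumerate:codition_2_structure}.

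The two directions are then: (i) \emph{if} $n$ has the stated form, the corresponding word is right special by \Cref{for:rightspecial}, and it is a prefix of $x$ because $\mathcal{R}_{k-1}\mathcal{L}_{k}^{j}$ is a prefix of $\mathcal{R}_{k}$ by the recursion $\mathcal{R}_{k} = \mathcal{R}_{k-1}\mathcal{L}_{k}^{a_{2k}}$ in \Cref{prop:blackboard}, hence of $x$; and (ii) \emph{conversely}, if $\overline{b}_{n}(x)=1$ then $x|_{n}$ is the unique right special word of length $n$, which by the enumeration argument must be one of the listed words, forcing $n$ into the stated form. I would also need to rule out that a right special word of the ``$\mathcal{L}$-type'' family $\mathcal{L}_{k+1}\mathcal{R}_{k}^{m}$ is a prefix of $x$ (it is not, since $x$ starts with $0 = \mathcal{R}_{0}$ while the $\mathcal{L}$-type words start with $1 = \mathcal{L}_{0}$), and dually for $y$; this uses that the first letters of $\mathcal{R}_{k}$ and $\mathcal{L}_{k}$ are $0$ and $1$ respectively, which follows immediately from the definitions and \Cref{prop:blackboard}.

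The main obstacle I anticipate is the bookkeeping in the enumeration step: verifying that the two interleaved families of lengths $\{q_{2k}+nq_{2k+1}\}$ and $\{q_{2k+1}+mq_{2k}\}$ partition $\mathbb{N}$ without overlap or gap, and correctly aligning the index shifts (the off-by-one between $\mathcal{R}_{k}$ having $\mathcal{R}_{k-1}\mathcal{L}_{k}^{j}$ as prefixes, and between the ranges $j \in \{0,\dots,a_{2k}-1\}$ versus $n \in \{0,\dots,a_{2(k+1)}-1\}$ in \Cref{for:rightspecial}). This is essentially the monotone increasing nesting $q_{2k-2} \le q_{2k-2}+jq_{2k-1} < q_{2k-2}+a_{2k}q_{2k-1} = q_{2k} \le \dots$ combined with $q_{2k} = a_{2k}q_{2k-1}+q_{2k-2}$ from \Cref{sec:defCF}, so it is routine but must be done carefully; everything else is a direct application of \Cref{prop:blackboard}, \Cref{for:rightspecial} and \Cref{rmk:BFMS-2002}.
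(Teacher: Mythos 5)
Your reverse implication and your appeal to the uniqueness of the right special word per length are fine and agree with the paper, but the forward implication rests on a claim that is false: the lengths $jq_{2k-1}+q_{2k-2}$ and $iq_{2l}+q_{2l-1}$ arising from the two families in \Cref{for:rightspecial} do \emph{not} enumerate every positive integer. Take $\theta=[0;2,2,2,\dots]$, so that $a_{1}=1$ and $a_{j}=2$ for $j\geq 2$, whence $q_{0}=1$, $q_{1}=2$, $q_{2}=5$, $q_{3}=12$. The admissible lengths produced by the two families begin $1,2,3,5,7,12,17,\dots$, so the length $4$ (and likewise $6,8,9,10,11,\dots$) is never attained. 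An Ostrowski expansion does represent every integer, but as a sum $\sum_{i}c_{i}q_{i}$ with several nonzero digits; your families only realise the two-term expressions with bounded leading digit. Since a Sturmian subshift has exactly one right special factor of \emph{every} length (\Cref{rmk:BFMS-2002}), there is a right special word of length $4$ in the example above --- namely the length-$4$ suffix of $\mathcal{R}_{1}=01010$, which belongs to neither family --- and your argument gives no reason why it, and its analogues at all the other missed lengths, should fail to be a prefix of $x$. Your first-letter observation only disposes of the $\mathcal{L}$-type family; it says nothing about these suffixes, some of which do begin with $0$ (e.g.\ the length-$6$ suffix of $\mathcal{L}_{2}=100101001010$ is $001010$, which agrees with $x\lvert_{6}=010101$ in its first letter but is a different word).

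This missing step is precisely where the paper's proof does its work: after an explicit verification for $n\leq q_{2}$ and $m\leq q_{3}$, it argues by induction, supposes $\overline{b}_{n}(x)=1$ for an $n$ not of the stated form, identifies the unique right special word of length $n$ as a suffix $\sigma^{\lvert w\rvert}(\mathcal{R}_{r+1})$ of $\mathcal{R}_{r+1}$ via a decomposition $\mathcal{R}_{r+1}=vw$ with $\lvert v\rvert=n$, and derives a contradiction by computing that this suffix begins with the letter $1$ while $v=x\lvert_{n}$ begins with $0$. Some comparison of this kind between $x\lvert_{n}$ and the actual right special word of length $n$ --- which for the intermediate lengths is a proper suffix of a family word rather than a family word itself --- is unavoidable, and your proposal omits it entirely.
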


\begin{proof}
\Cref{for:rightspecial} gives the reverse implication: if $n = j q_{2 k - 1} + q_{2 k - 2}$, for some $k \in \mathbb{N}$ and some $j \in \{ 0, 1, \dots, a_{2k} - 1 \}$, then $\overline{b}_{n}(x) = 1$, and if $m = i q_{2 l} + q_{2 l - 1}$, for some $l \in \mathbb{N}$ and $i \in \{ 0, 1, \dots, a_{2l + 1}-1 \}$, then $\overline{b}_{m}(y) = 1$.

For the forward implication, we show the result for $b_{n}(x)$ and $b_{m}(y)$ where $n \leq \lvert\mathcal{R}_{1} \rvert = q_{2}$ and where $m \leq \lvert L_{2} \rvert = q_{3}$ after which we proceed by induction to obtain the general result.

By \Cref{rmk:BFMS-2002} and \Cref{for:rightspecial} it follows that $b_{1}(x) = 1$ and, for $m \in \{ 1, 2, \dots, q_{1} - 1 \}$, that $b_{m}(y) = 0$.  Consider the word $\mathcal{R}_{1} = x\lvert_{\lvert \mathcal{R}_{1} \rvert} = x\lvert_{ q_{2}}$.  Let $n = k q_{1} + (j + 1)q_{0}$ for some $k \in \{ 0, 1, \dots, a_{2} - 1 \}$ and some $j \in \{ 1, 2, \dots, a_{1} \}$.  For $k = 0$,
\begin{align*}
x\lvert_{n} = \mathcal{R}_{1}\lvert_{n} = (0, 1, \underbrace{0, 0, \dots, 0}_{j - 1} ).
\end{align*}
By \Cref{prop:blackboard} and \Cref{for:rightspecial},
\begin{align*}
\mathcal{L}_{1} = (1, \underbrace{0, 0, \dots, 0}_{a_{1}} )
\end{align*}
is a right special word and thus, by \Cref{rmk:BFMS-2002}, the set of all right special words of length at most $\lvert L_{1} \rvert = a_{1} + 1$ is
\begin{align*}
\{ (1, \underbrace{0, 0, \dots, 0}_{a_{1}} ), (\underbrace{0, 0, \dots, 0}_{a_{1}} ), (\underbrace{0, 0, \dots, 0}_{a_{1}-1} ), \dots, (0, 0), (0) \}.
\end{align*}
Since there exists a unique right special word per length, it follows that $b_{n}(x) = 0$.  In the case that $k \in \{ 1, \dots, a_{2} - 1 \}$,
\begin{align*}
\sigma^{n - \lvert L_{1} \rvert}(x\lvert_{n}) = ( \underbrace{0, 0, \dots, 0}_{a_{1} - (j - 1)}, 1, \underbrace{0, 0, \dots, 0}_{j - 1} ),
\end{align*}
where we recall that $a_{1} - (j - 1) \geq 1$.  Since there exists a unique right special word per length and since
\begin{align*}
\lvert\sigma^{n - \lvert L_{1} \rvert}(x\lvert_{n}) \rvert = \lvert \sigma^{n - \lvert L_{1} \rvert}( \underbrace{0, 0, \dots, 0}_{a_{1} - (j - 1)}, 1, \underbrace{0, 0, \dots, 0}_{j - 1} ) \rvert = \lvert L_{1} \rvert,
\end{align*}
it follows that $b_{n}(x) = 0$.  An application of \Cref{for:rightspecial} completes the proof for $n \leq \lvert\mathcal{R}_{1} \rvert = q_{2}$.

Consider the word $\mathcal{L}_{2} = y\lvert_{\lvert \mathcal{L}_{2} \rvert} = y\lvert_{ q_{3}}$.  Let $m = l q_{2}  + 1 + (i + 1) q_{1} = l \lvert \mathcal{R}_{1} \rvert  + 1 + (i + 1) \lvert \mathcal{L}_{1} \rvert $  for some $l \in \{ 0, 1, \dots, a_{3} - 1 \}$ and $i \in \{ 0, 1, \dots, a_{2}-1 \}$.  By \Cref{prop:blackboard} we have that
\begin{align*}
\sigma^{l \lvert \mathcal{R}_{1} \rvert +1}(y\lvert_{m}) 
= \sigma^{l \lvert \mathcal{R}_{1} \rvert +1}(\mathcal{L}_{2}\lvert_{m})
= \sigma( \mathcal{L}_{1} \mathcal{R}_{0} \underbrace{\mathcal{L}_{1}\mathcal{L}_{1} \dots \mathcal{L}_{1}}_{i} )
=  \underbrace{\mathcal{R}_{0} \mathcal{R}_{0} \dots \mathcal{R}_{0}}_{a_{1} + 1 = q_{1} = \lvert \mathcal{L}_{1}\rvert} \underbrace{\mathcal{L}_{1} \mathcal{L}_{1} \dots \mathcal{L}_{1}}_{i}
\end{align*}
and hence $\lvert \sigma^{l \lvert \mathcal{R}_{1} \rvert +1}(y\lvert_{m}) \rvert = (i + 1) \lvert \mathcal{L}_{1} \rvert = (i + 1) q_{1}$.  By \Cref{rmk:BFMS-2002} and \Cref{for:rightspecial},
\begin{align*}
\sigma^{1 + (a_{2} - (i + 1))\lvert \mathcal{L}_{1} \rvert}(x\lvert_{q_{2}})
= \sigma^{1 + (a_{2} - (i + 1))q_{1}}(x\lvert_{q_{2}})
= \sigma^{1 + (a_{2} - (i + 1))q_{1}}(\mathcal{R}_{1})
= \underbrace{\mathcal{L}_{1} \mathcal{L}_{1} \dots \mathcal{L}_{1}}_{i + 1}
\end{align*}
is a right special word of length $(i + 1)\lvert \mathcal{L}_{1} \rvert = (i + 1) q_{1}$.  Since there is a unique right special word per length and since 
\begin{align*}
\underbrace{\mathcal{R}_{0} \mathcal{R}_{0} \dots \mathcal{R}_{0}}_{a_{1} + 1 = q_{1} = \lvert \mathcal{L}_{1}\rvert} \underbrace{\mathcal{L}_{1} \mathcal{L}_{1} \dots \mathcal{L}_{1}}_{i}
\neq
\underbrace{\mathcal{L}_{1} \mathcal{L}_{1} \dots \mathcal{L}_{1}}_{i + 1}
\end{align*}
it follows that $b_{m}(y) = 0$.  An application of \Cref{for:rightspecial} yields the result for $m \leq \lvert \mathcal{L}_{2} \rvert = q_{3}$.

Assume there is $r \in \mathbb{N}$ so that the result holds for all natural numbers $n < q_{2r}$ and $m < q_{2r+1}$, namely,
\begin{enumerate}[label=(\roman*),leftmargin=2.5em]
\item\label{induction(a)} $\overline{b}_{n}(x) = 1$ if and only if $n = j q_{2 k - 1} + q_{2 k - 2}$ for $k \in \{ 1, 2, \dots, r \}$ and $j \in \{ 0, 1, \dots, a_{2k} - 1 \}$, and 
\item\label{induction(b)} $\overline{b}_{m}(y) = 1$ if and only if $m = i q_{2 l} + q_{2 l - 1}$ for $l \in \{ 1, 2, \dots, r \}$ and $i \in \{ 0, 1, \dots, a_{2l + 1}-1 \}$.
\end{enumerate}
The proof of \ref{induction(a)} and \ref{induction(b)} for $r+1$ follow in the same manner; thus below we provide the proof of \ref{induction(a)} for $r+1$ and leave the proof of \ref{induction(b)} to the reader.  To this end consider the word
\begin{align*}
x\lvert_{\lvert \mathcal{R}_{r+1}\rvert} = \mathcal{R}_{r+1} = \mathcal{R}_{r} \underbrace{\mathcal{L}_{r+1} \mathcal{L}_{r+1} \dots \mathcal{L}_{r+1}}_{a_{2(r+1)}}.
\end{align*}
By way of contradiction, suppose there exists an integer $n$ such that $\lvert \mathcal{R}_{r} \rvert < n \leq \lvert \mathcal{R}_{r+1} \rvert$, $n$ is not of the form stated in Part~\ref{enumerate:codition_1_structure} and $b_{n}(x) = 1$.  For if not, the result is a consequence of \Cref{for:rightspecial}.  By our hypothesis, we have that, $n = \lvert \mathcal{R}_{r} \rvert + (a_{2(r+1)} - 1 - b) \lvert \mathcal{L}_{r+1} \rvert + \lvert \mathcal{L}_{r} \rvert + (a_{2(r+1)-1} - a) \lvert \mathcal{R}_{r} \rvert$, where $a \in \{ 1, 2, \dots a_{2(r+1)-1}  \}$ and $b \in \{ 0, 1, \dots , a_{2(r+1)} -1 \}$.  Set
\begin{align*}
v = \mathcal{R}_{r} \underbrace{\mathcal{L}_{r+1} \mathcal{L}_{r+1} \dots \mathcal{L}_{r+1}}_{a_{2(r+1)}-1-b} \mathcal{L}_{r} \underbrace{\mathcal{R}_{r} \mathcal{R}_{r} \dots \mathcal{R}_{r}}_{a_{2(r+1)-1}-a}
\quad \text{and} \quad
w = \underbrace{\mathcal{R}_{r} \mathcal{R}_{r} \dots \mathcal{R}_{r}}_{a} \underbrace{\mathcal{L}_{r+1} \mathcal{L}_{r+1} \dots \mathcal{L}_{r+1}}_{b},
\end{align*}
so that $\lvert v \rvert = n$, $\lvert w \rvert = \mathcal{R}_{r+1} - n$, $x\lvert_{\lvert \mathcal{R}_{r+1}} = \mathcal{R}_{r+1} = vw$ and $\lvert \sigma^{\lvert w \rvert}(\mathcal{R}_{r+1}) \rvert = \lvert v \rvert$.  \Cref{for:rightspecial} implies $\sigma^{\lvert w \rvert}(x\lvert_{\lvert \mathcal{R}_{r+1}}) = \sigma^{\lvert w \rvert}(\mathcal{R}_{r+1})$ is a right special word.  Since we have assumed that $b_{n}(x) = 1$ and since there exists a unique right special word per length (\Cref{rmk:BFMS-2002}) it follows that $\sigma^{\lvert w \rvert}(\mathcal{R}_{r+1}) = v$.  If $a = 1$, then 
\begin{align*}
\sigma^{\lvert w \rvert}(\mathcal{R}_{r+1})
= \sigma^{\lvert \mathcal{R}_{r} \rvert + b \lvert \mathcal{L}_{r+1} \rvert}(\mathcal{R}_{r} \underbrace{\mathcal{L}_{r+1} \mathcal{L}_{r+1} \dots \mathcal{L}_{r+1}}_{a_{2(r+1)}})
= \underbrace{\mathcal{L}_{r+1} \mathcal{L}_{r+1} \dots \mathcal{L}_{r+1}}_{a_{2(r+1)} - b}.
\end{align*}
This is a contradiction to the assumption $b_{n}(x) = 1$; since if this were the case we would have that $\sigma^{\lvert w \rvert}(\mathcal{R}_{r+1}) = v$, but the first letter of $v$ is $0$ and the first letter of $\sigma^{\lvert w \rvert}(\mathcal{R}_{r+1})$ is $1$.  Hence, $a \geq 2$, and so
\begin{align*}
\sigma^{\lvert w \rvert}(\mathcal{R}_{r+1})
&= \sigma^{a \lvert \mathcal{R}_{r} \rvert + b \lvert \mathcal{L}_{r+1} \rvert}(\mathcal{R}_{r} \underbrace{\mathcal{L}_{r+1} \mathcal{L}_{r+1} \dots \mathcal{L}_{r+1}}_{a_{2(r+1)}})\\
&= \sigma^{(a - 1) \lvert \mathcal{R}_{r} \rvert}(\mathcal{L}_{r} \underbrace{\mathcal{R}_{r} \mathcal{R}_{r} \dots \mathcal{R}_{r}}_{a_{2(r+1) - 1}}\underbrace{\mathcal{L}_{r+1} \mathcal{L}_{r+1} \dots \mathcal{L}_{r+1}}_{a_{2(r+1)}-b - 1})\\
&= \sigma^{\lvert \mathcal{R}_{r} \rvert - \lvert \mathcal{L}_{r} \rvert}(\underbrace{\mathcal{R}_{r} \mathcal{R}_{r} \dots \mathcal{R}_{r}}_{a_{2(r+1) - 1} - (a - 2)}\underbrace{\mathcal{L}_{r+1} \mathcal{L}_{r+1} \dots \mathcal{L}_{r+1}}_{a_{2(r+1)}-b - 1})\\
&= \sigma^{(a_{2r}-1)\lvert \mathcal{L}_{r} \rvert + \lvert \mathcal{R}_{r-1} \rvert}(\mathcal{R}_{r-1} \underbrace{\mathcal{L}_{r} \mathcal{L}_{r} \dots \mathcal{L}_{r}}_{a_{2r}} \hspace{-0.5em}\underbrace{\mathcal{R}_{r} \mathcal{R}_{r} \dots \mathcal{R}_{r}}_{a_{2(r+1) - 1} - (a - 2)-1}\hspace{-0.5em}\underbrace{\mathcal{L}_{r+1} \mathcal{L}_{r+1} \dots \mathcal{L}_{r+1}}_{a_{2(r+1)}-b - 1})\\
&= \mathcal{L}_{r} \hspace{-0.7em}\underbrace{\mathcal{R}_{r} \mathcal{R}_{r} \dots \mathcal{R}_{r}}_{a_{2(r+1) - 1} - (a - 2)-1}\hspace{-0.7em}\underbrace{\mathcal{L}_{r+1} \mathcal{L}_{r+1} \dots \mathcal{L}_{r+1}}_{a_{2(r+1)}-b - 1}),
\end{align*}
where we observe $a_{2(r+1)} - (a-2)-1 \geq 1$ and $a_{2(r+1)} - b - 1 \geq 0$. This contradicts the assumption $b_{n}(x) = 1$; since if this were the case we would have $\sigma^{\lvert w \rvert}(\mathcal{R}_{r+1}) = v$, but the first letter of $v$ is $0$ and the first letter of $\sigma^{\lvert w \rvert}(\mathcal{R}_{r+1})$ is $1$.
\end{proof}

\subsection{Spectral metrics}\label{sec:Pre_lim_Spectral_Metric}

Let $X$ denote a Sturmian subshift of slope $\theta = [0; a_{1} + 1, a_{2}, \dots ] \in \Theta_{\alpha} \cap [0, 1/2]$ and let $x, y \in X$ denote the unique infinite words such that $x\lvert_{\lvert \mathcal{R}_{n} \rvert} = \mathcal{R}_{n}$ and $y\lvert_{\lvert \mathcal{L}_{n} \rvert} = \mathcal{L}_{n}$, for all $n \in \mathbb{N}$.  By \Cref{prop:blackboard}, we have, for all $n \in \mathbb{N}$, that
\begin{align}\label{eq:unltrametric_shift_words}
\begin{aligned}
\sigma^{\lvert \mathcal{L}_{n} \rvert}(y)\lvert_{\lvert \mathcal{R}_{n} \rvert + 1} &= \mathcal{R}_{n} (0),\\[0.25em]
d_{\delta}(x, \sigma^{\lvert \mathcal{L}_{n} \rvert}(y)) &= \delta_{q_{2n}},
\end{aligned}
\qquad \qquad
\begin{aligned}
\sigma^{\lvert \mathcal{R}_{n} \rvert}(x)\lvert_{\lvert \mathcal{L}_{n+1} \rvert + 1}  &= \mathcal{L}_{n+1}(1),\\[0.25em]
d_{\delta}(\sigma^{\lvert \mathcal{R}_{n} \rvert}(x), y) &= \delta_{q_{2(n+1)-1}}.
\end{aligned}
\end{align}
Combining \Cref{cor:branching_points} and \eqref{eq:spectral_metric_bound}, we obtain that
\begin{align}\label{eq:xyL}
\begin{aligned}
d_{s, \delta}(x, \sigma^{\lvert \mathcal{L}_{n} \rvert}(y))
&= \sum_{k = 2n}^{\infty} \sum_{j = 1}^{a_{k+1}} \delta_{j q_{k} + q_{k - 1} - \mathds{1}_{2\mathbb{Z}}(k-2n) q_{2n-1}},\\
d_{s, \delta}(\sigma^{\lvert \mathcal{R}_{n} \rvert}(x), y)
&= \sum_{k = 2n+1}^{\infty} \sum_{j = 1}^{a_{k+1}} \delta_{j q_{k} + q_{k - 1} - \mathds{1}_{2\mathbb{Z}}(k - (2n+1)) q_{2n}},
\end{aligned}
\end{align}
where $\mathds{1}_{2 \mathbb{Z}}$ denotes the characteristic function on the group $2 \mathbb{Z}$ of even integers.  For $r > 0$, we set
\begin{align*}
\psi_{x, n}(r) \coloneqq \frac{d_{s, \delta}(x, \sigma^{\lvert \mathcal{L}_{n} \rvert}(y))}{d_{\delta}(x, \sigma^{\lvert \mathcal{L}_{n} \rvert}(y))^{r}}
\quad \text{and} \quad
\psi_{y, n}(r) \coloneqq \frac{d_{s, \delta}(\sigma^{\lvert \mathcal{R}_{n} \rvert}(x), y)}{d_{\delta}(\sigma^{\lvert \mathcal{R}_{n} \rvert}(x), y)^{r}}.
\end{align*}
Notice that $\displaystyle{\limsup_{n \to \infty} \psi_{z, n}(r) \leq \psi_{z}(r)} \leq \psi(r)$ for $z\in\{x,y\}$. 

\begin{proposition}\label{prop:curious}
Let $\alpha > 1$ and let $X$ denote a Sturmian subshift of slope $\theta \in [0, 1/2]$.  Let $t > 1 - 1/\alpha$ and set $\delta \coloneqq ( n^{-t} )_{n \in \mathbb{N}}$.
\begin{enumerate}[label=(\arabic*),leftmargin=2.5em]
\item\label{prop:curious_1} 
\begin{enumerate}[label=(\alph*)]
\item\label{prop:curious_1a} If $t \in (1-1/\alpha,1)$ and $A_{\alpha}(\theta) < \infty$, then
\begin{align*}
\sup_{z \in \{ x, y \}} \limsup_{n \to \infty} \psi_{z, n}(r) 
\begin{cases}
= 0 & \text{if} \; 0 < r < \alpha - (\alpha - 1)/t),\\
< \infty & \displaystyle \text{if} \; r = \alpha - (\alpha - 1)/t.
\end{cases}
\end{align*}
\item\label{prop:curious_1b} If $t \in (1-1/\alpha,1)$ and $A_{\alpha}(\theta) > 0$, then
\begin{align*}
\sup_{z \in \{ x, y \}} \limsup_{n \to \infty} \psi_{z, n}(r)
\begin{cases}
= \infty & \displaystyle \text{if} \; r > \alpha - (\alpha - 1)/t,\\
> 0 & \text{if} \; r = \alpha - (\alpha - 1)/t).
\end{cases}
\end{align*}
\end{enumerate}
\item\label{prop:curious_2} 
\begin{enumerate}[label=(\alph*)]
\item\label{prop:curious_2a} If $t = 1$, $A_{\alpha}(\theta) < \infty$ and $r \in (0, 1)$, then
\begin{align*}
\sup_{z \in \{ x, y \}} \limsup_{n \to \infty} \psi_{z, n}(r) = 0,
\end{align*}
\item\label{prop:curious_2b} If $t = 1$ and if $r \geq 1$, then
\begin{align*}
\sup_{z \in \{ x, y \}} \limsup_{n \to \infty} \psi_{z, n}(r) = \infty.
\end{align*}
\end{enumerate}
\item\label{prop:curious_3} 
\begin{enumerate}[label=(\alph*)]
\item\label{prop:curious_3a} If $t > 1$, then
\begin{align*}
\sup_{z \in \{ x, y \}} \limsup_{n \to \infty} \psi_{z, n}(r) 
\begin{cases}
= 0 & \text{if} \; 0 < r < 1,\\
< \infty & \displaystyle \text{if} \; r =1.
\end{cases}
\end{align*}
\item\label{prop:curious_3b} If $t \in (1-1/\alpha,1)$, then
\begin{align*}
\sup_{z \in \{ x, y \}} \limsup_{n \to \infty} \psi_{z, n}(r)
\begin{cases}
= \infty & \displaystyle \text{if} \; r > 1,\\
> 0 & \text{if} \; r = 1.
\end{cases}
\end{align*}
\end{enumerate}
\end{enumerate}
\end{proposition}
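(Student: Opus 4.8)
The plan is to reduce everything to careful asymptotic estimates of the two sums in \eqref{eq:xyL}, using the explicit description of the right-special indices from \Cref{cor:branching_points}. First I would fix $z = x$ (the case $z = y$ being symmetric, since the roles of the even- and odd-indexed partial quotients are interchanged) and write $D_n \coloneqq d_{s,\delta}(x, \sigma^{\lvert \mathcal{L}_n\rvert}(y)) = \sum_{k \geq 2n} \sum_{j=1}^{a_{k+1}} \delta_{jq_k + q_{k-1} - \mathds{1}_{2\mathbb{Z}}(k-2n)q_{2n-1}}$, while the denominator is simply $d_\delta(x, \sigma^{\lvert\mathcal{L}_n\rvert}(y))^r = \delta_{q_{2n}}^r = q_{2n}^{-tr}$ by \eqref{eq:unltrametric_shift_words}. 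With $\delta_n = n^{-t}$, each inner block $\sum_{j=1}^{a_{k+1}} (jq_k + q_{k-1})^{-t}$ is comparable (up to universal constants absorbing the $q_{2n-1}$ correction, which only matters for the leading term $k=2n$) to $q_k^{-t} \sum_{j=1}^{a_{k+1}} j^{-t}$. Since $t \in (1-1/\alpha,1) \subset (0,1)$ in Part \ref{prop:curious_1}, $\sum_{j=1}^{a_{k+1}} j^{-t} \asymp a_{k+1}^{1-t}$, so the $k$-th block is $\asymp a_{k+1}^{1-t} q_k^{-t}$, and using $q_{k+1} \geq a_{k+1} q_k$ one checks the blocks decay geometrically fast in $k$, so $D_n \asymp a_{2n+1}^{1-t} q_{2n}^{-t}$ up to the boundary correction from the shift by $q_{2n-1}$ in the $k=2n$ term.

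The second step is to analyse that boundary correction and confirm it does not change the order. For $k = 2n$ the summand is $\delta_{jq_{2n}+q_{2n-1}-q_{2n-1}} = \delta_{jq_{2n}}$, giving a block $\asymp a_{2n+1}^{1-t} q_{2n}^{-t}$ as well; for $k = 2n+1$ there is no correction and the block is $\asymp a_{2n+2}^{1-t} q_{2n+1}^{-t} \leq a_{2n+2}^{1-t}(a_{2n+1}q_{2n})^{-t}$, which is dominated by the $k=2n$ contribution when $a_{2n+2}^{1-t} a_{2n+1}^{-t} \lesssim a_{2n+1}^{1-t}$, i.e. always since $1-t < 1 < t + (1-t)$... here one must be slightly careful and instead sum the full geometric-type tail, bounding $\sum_{k > 2n}$ by a constant times its first term using $q_k$'s growth; this is routine but needs the hypothesis $t>1-1/\alpha$ nowhere yet. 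Thus $\psi_{x,n}(r) \asymp a_{2n+1}^{1-t} q_{2n}^{-t} \cdot q_{2n}^{tr} = a_{2n+1}^{1-t} q_{2n}^{t(r-1)}$. Now substitute $q_{2n} \asymp a_{2n} q_{2n-1}$ is not quite what is needed; rather I would rewrite $a_{2n+1}^{1-t} q_{2n}^{t(r-1)}$ and compare with $A_\alpha(\theta) = \limsup_m a_m q_{m-1}^{1-\alpha}$. Writing $a_{2n+1} = (a_{2n+1} q_{2n}^{1-\alpha}) q_{2n}^{\alpha-1}$, we get $\psi_{x,n}(r) \asymp (a_{2n+1}q_{2n}^{1-\alpha})^{1-t} q_{2n}^{(\alpha-1)(1-t) + t(r-1)}$. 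The exponent of $q_{2n}$ vanishes precisely when $r = 1 - (\alpha-1)(1-t)/t = \alpha - (\alpha-1)/t$, which is the critical value in the statement; for $r$ below it the exponent is negative and, since $q_{2n}\to\infty$ and the $A_\alpha$-factor stays bounded when $A_\alpha(\theta)<\infty$, the $\limsup$ is $0$, giving \ref{prop:curious_1a}, while for $r$ above it the exponent is positive and, since $\limsup a_{2n+1}q_{2n}^{1-\alpha}$ is comparable to $A_\alpha(\theta) > 0$ along a subsequence (here one needs that the $\limsup$ in $A_\alpha$ is realised along odd indices too — this uses the symmetric statement for $y$, or a parity argument on subsequences), the $\limsup$ is $\infty$, giving \ref{prop:curious_1b}; at $r$ equal to the critical value one gets $0 < \limsup < \infty$ from the two bounds combined, taking the supremum over $z \in \{x,y\}$ to handle the parity issue cleanly.

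Parts \ref{prop:curious_2} and \ref{prop:curious_3} follow the same template with the arithmetic of the inner sum adjusted: when $t = 1$, $\sum_{j=1}^{a_{k+1}} j^{-1} \asymp \log a_{k+1}$, so $D_n \asymp q_{2n}^{-1}\log a_{2n+1}$ (plus faster-decaying tail), hence $\psi_{x,n}(r) \asymp q_{2n}^{r-1}\log a_{2n+1}$; for $r < 1$ this tends to $0$ (the log is $O(\log q_{2n})$, beaten by the negative power), giving \ref{prop:curious_2a}, and for $r \geq 1$ it is $\gtrsim q_{2n}^{r-1} \to \infty$ (using merely that infinitely many $a_{m} \geq 2$, true for any irrational that is not eventually the golden mean — and in fact for \emph{any} irrational since $a_m \equiv 1$ is impossible for $\theta \in [0,1/2]$ chosen as in the normalisation; in any case $q_{2n}\to\infty$ suffices when $r>1$, and for $r=1$ one uses a single $a_m\geq 2$), giving \ref{prop:curious_2b}. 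When $t > 1$, $\sum_{j\geq 1} j^{-t}$ converges, so each block is $\asymp q_k^{-t}$, the tail is again geometrically controlled, $D_n \asymp q_{2n}^{-t}$, and $\psi_{x,n}(r) \asymp q_{2n}^{t(r-1)}$; this is $0$ for $r<1$, bounded (indeed $\to 1$ up to constants) at $r=1$, and $\to\infty$ for $r>1$, yielding \ref{prop:curious_3a} and \ref{prop:curious_3b} with no Diophantine hypothesis needed. The main obstacle is the bookkeeping in the first two steps: separating the leading $k=2n$ block (with its shifted index $-q_{2n-1}$) from the geometric tail and showing uniformly that the tail does not dominate, which requires the elementary but fiddly estimate $q_{k+1} \geq a_{k+1}q_k \geq q_k$ together with $\sum_{j=1}^{a}j^{-t} \asymp a^{1-t}, \log a, 1$ in the three regimes — and, secondarily, the parity matching in the lower bounds, which is what forces the supremum over $z\in\{x,y\}$ into the statement.
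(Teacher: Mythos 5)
Your overall strategy coincides with the paper's: split each sum in \eqref{eq:xyL} into the leading block $k=2n$ (respectively $k=2n+1$) plus a tail, estimate the inner sums by $\sum_{j=1}^{a}j^{-t}\asymp a^{1-t}$, $\log a$, or $O(1)$ according to whether $t<1$, $t=1$ or $t>1$, control the tail via the exponential growth $q_{k+j}\gtrsim \gamma^{j}q_{k}$ of \Cref{rmk:exp_q_k}, and read off the critical exponent $r=\alpha-(\alpha-1)/t$; the lower bounds along subsequences realising $A_{\alpha}(\theta)$, and the use of the supremum over $z\in\{x,y\}$ to absorb the parity of those subsequences, are exactly what the paper does. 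However, two of your steps would fail as stated. First, the two-sided claim $D_{n}\asymp a_{2n+1}^{1-t}q_{2n}^{-t}$ (equivalently, that the tail is bounded by a constant times the first block) is false in general: if $a_{2n+1}=1$ and $a_{2n+2}$ is large, the $k=2n+1$ block, of order $a_{2n+2}^{1-t}q_{2n+1}^{-t}\geq a_{2n+2}^{1-t}(2q_{2n})^{-t}$, dominates the first block $q_{2n}^{-t}$ as soon as $a_{2n+2}>2^{t/(1-t)}$. The correct upper bound is obtained by first substituting $a_{k+1}\leq c\,q_{k}^{\alpha-1}$ (which uses $A_{\alpha}(\theta)<\infty$) into \emph{every} block, giving $\sum_{k\geq 2n}q_{k}^{-(1-\alpha(1-t))}\lesssim q_{2n}^{-(1-\alpha(1-t))}$ --- and the geometric summability here is precisely where the hypothesis $t>1-1/\alpha$ enters, contrary to your aside that it is needed ``nowhere yet''. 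This still produces the same critical exponent, but only as two one-sided bounds to be used separately for \ref{prop:curious_1}\ref{prop:curious_1a} and \ref{prop:curious_1}\ref{prop:curious_1b}; the single ``$\asymp$'' is not available.

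Second, in part \ref{prop:curious_2}\ref{prop:curious_2b} at $r=1$ your argument does not reach the stated conclusion: the first block gives $\psi_{x,n}(1)\gtrsim\sum_{j=1}^{a_{2n+1}}j^{-1}\asymp\log a_{2n+1}$, and ``a single $a_{m}\geq 2$'' (or even infinitely many bounded entries $\geq 2$) only yields a limit superior bounded below by a positive constant, not $=\infty$. To conclude $\infty$ one needs $\limsup_{m}a_{m}=\infty$; the paper's proof extracts this from $A_{\alpha}(\theta)>0$, which for $\alpha>1$ forces the continued fraction entries to be unbounded and is a standing assumption in the section where $x$ and $y$ are introduced. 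With the tail estimate repaired as above and this unboundedness supplied, the remaining parts of your plan (\ref{prop:curious_1}\ref{prop:curious_1a}, \ref{prop:curious_1}\ref{prop:curious_1b}, \ref{prop:curious_2}\ref{prop:curious_2a}, \ref{prop:curious_3}\ref{prop:curious_3a}, \ref{prop:curious_3}\ref{prop:curious_3b}) go through and match the paper's computations.
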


\begin{remark}\label{rmk:exp_q_k}
In the proof of all three parts of \Cref{prop:curious}, we will use the following observation. From the iterative definition of the sequence $(q_{k})_{k \in \mathbb{N}}$ and using an inductive argument, we have that $q_{k+j} > f_{j+1}  q_{k}$, for all $k \in \mathbb{N}$ and $j \geq 0$.  Here, $f_{k}$ denotes the $k$-th Fibonacci number, that is, $f_{1} = 1$, $f_{2} = 1$ and $f_{k+1} = f_{k} + f_{k-1}$.  Setting $\gamma \coloneqq (1 + \sqrt{5})/2$, it is known that $f_{k} = (\gamma^{k} - (- \gamma)^{-k})/\sqrt{5}$ and so, $f_{k} > \gamma^{k}/(2 \sqrt{5})$.  Thus, we have $q_{k + j} > q_{k}\gamma^{j}/(2 \sqrt{5})$, for $k \in \mathbb{N}$ and $j \geq 0$. 
\end{remark}

\begin{proof}[Proof of \Cref{prop:curious}~\ref{prop:curious_1}\ref{prop:curious_1a}]
Since $A_{\alpha}(\theta) < \infty$, there exists a constant $c > 1$ with $a_{k+1}  q_{k}^{1- \alpha} < c$, for all sufficiently large $k \in \mathbb{N}$.  This with \Cref{rmk:exp_q_k} and the fact $t \in (1-1/\alpha,1)$ yields the following.
\begin{align}\label{eq:copy-paste-argument}
\begin{aligned}
&\limsup_{m \to \infty} q_{m}^{t  r}  \sum_{k = m}^{\infty}  \sum_{j = 1}^{a_{k+1}} \frac{1}{(j  q_{k} + q_{k-1} - \mathds{1}_{2\mathbb{Z}}(k-m)q_{m-1})^{t}}\\
&\leq \limsup_{m \to \infty} q_{m}^{t  r}  \sum_{k = m}^{\infty}  \frac{1}{q_{k}^{t}}  \sum_{j = 1}^{a_{k+1}} \frac{1}{j^{t}}\\
&\leq \limsup_{m \to \infty} \frac{1 + 2^{t}}{1 - t} q_{m}^{t  r}  \sum_{k = m}^{\infty}  \frac{a_{k+1}^{1-t}}{q_{k}^{t}}\\
&= \limsup_{m \to \infty} \frac{1 + 2^{t}}{1 - t} q_{m}^{t  r}  \sum_{k = m}^{\infty}  \left( \frac{a_{k+1}}{q_{k}^{\alpha - 1}}  \right)^{1-t} \frac{1}{q_{k}^{1 - \alpha  (1-t)}}\\
&\leq \limsup_{m \to \infty} \frac{(1 + 2^{t})  c^{1 - t}}{1 - t} q_{m}^{t  r}  \sum_{k = m}^{\infty}  \frac{1}{q_{k}^{1 - \alpha  (1-t)}}\\
&\leq \limsup_{m \to \infty} \frac{(1 + 2^{t})  (2 \sqrt{5})^{1 - \alpha (1-t)}  c^{1 - t}}{1 - t} q_{m}^{t  r}  \sum_{j = 0}^{\infty}  \frac{1}{q_{m}^{1 - \alpha  (1-t)} \gamma^{j (1 - \alpha (1-t))}}\\
&= \limsup_{m \to \infty} \frac{(1 + 2^{t})  (2  \sqrt{5})^{1 - \alpha (1-t)}  c^{1 - t}}{(1 - t)(1- \gamma^{-(1 - \alpha (1-t))})} q_{m}^{t  r - 1 + \alpha  (1-t)}.
\end{aligned}
\end{align}
This latter value is equal to zero if $0 < r < \alpha - (\alpha - 1)/t$, and finite if $r = \alpha - (\alpha - 1)/t$. This together with \eqref{eq:unltrametric_shift_words} and \eqref{eq:xyL} yields that, for $r \in (0, \alpha - (\alpha - 1)/t)$,
\begin{align*}
\sup_{z \in \{ x, y \}} \limsup_{n \to \infty} \psi_{z, n}(\alpha - (\alpha - 1)/t) < \infty
\quad \text{and} \quad
\sup_{z \in \{ x, y \}} \limsup_{n \to \infty} \psi_{z, n}(r) = 0.
\end{align*}
This completes the proof.
\end{proof}

\begin{proof}[Proof of \Cref{prop:curious}~\ref{prop:curious_1}\ref{prop:curious_1b}]
Since $A_{\alpha}(\theta) > 0$ and since $(q_{n})_{n \in \mathbb{N}}$ is an unbounded monotonic sequence, there exists a sequence of natural numbers $(n_{k})_{k \in \mathbb{N}}$, such that
$a_{n_{k}} q_{n_{k}}^{1-\alpha} > A_{\alpha}(\theta)/2$,
for all $k \in \mathbb{N}$.  Hence, we have the following chain of inequalities.
\begin{align*}
\limsup_{m \to \infty} q_{m}^{t  r}  \sum_{j = 1}^{a_{m+1}} \frac{1}{(j  q_{m} )^{t}}
\geq \limsup_{m \to \infty} q_{m}^{t  r - t}  \sum_{j = 1}^{a_{m+1}} \frac{1}{j^{t}}
&\geq \limsup_{m \to \infty} q_{m}^{t  r - t}  \frac{a_{m+1}^{1-t} - 1}{1-t}\\
&\geq \left(\frac{A_{\alpha}(\theta)}{2 (1- t)}\right)^{1-t} \!\! \limsup_{j \to \infty} q_{n_{j}}^{t  r - t + (1-t)(\alpha - 1)}
\end{align*}
This latter term is positive and finite if $r = \alpha - (\alpha - 1)/t$ and is infinite if $r > \alpha - (\alpha - 1)/t$.  Combining this with \eqref{eq:unltrametric_shift_words} and \eqref{eq:xyL} yields the required result.
\end{proof}

\begin{proof}[Proof of \Cref{prop:curious}~\ref{prop:curious_2}\ref{prop:curious_2a}]
Since $A_{\alpha}(\theta) < \infty$, there exists a constant $c > 1$ so that $a_{k+1}  q_{k}^{1- \alpha} < c$, for all $k \in \mathbb{N}$.  We recall that the sequence $(q_{k})_{k \in \mathbb{N}}$ is strictly increasing and notice, for $x > \mathrm{e}^{1}$, that the function $x \mapsto \ln(x)/x$ is strictly decreasing.  Combining these observations with \Cref{rmk:exp_q_k} yields the following chain of inequalities.
\begin{align*}
&\limsup_{m \to \infty}  q_{m}^{r}  \sum_{k = m}^{\infty}  \sum_{j = 1}^{a_{k+1}} \frac{1}{j  q_{k} + q_{k-1} - \mathds{1}_{2\mathbb{Z}}(k-m) q_{m-1}}\\
&\leq \limsup_{m \to \infty}  q_{m}^{r}  \sum_{k = m}^{\infty}  \frac{1}{q_{k}}  \sum_{j = 1}^{a_{k+1}} \frac{1}{j}
\leq \limsup_{m \to \infty}  q_{m}^{r}  \sum_{k = m}^{\infty}  \frac{\ln(a_{k+1}) + 1}{q_{k}}\\
&\leq \limsup_{m \to \infty}  q_{m}^{r-1}  (\ln(c) + (\alpha - 1)\ln(q_{m}) + 1) + q_{m}^{r}  \sum_{k = m+1}^{\infty}  \frac{\ln(c) + (\alpha - 1)\ln(q_{k}) + 1}{q_{k}}\\
&\leq \limsup_{m \to \infty}  q_{m}^{r-1}  (\ln(c) + (\alpha - 1)\ln(q_{m}) + 1) + 2  \sqrt{5}  q_{m}^{r-1}  \sum_{j = 1}^{\infty}  \frac{\ln(c) + (\alpha - 1) j \ln(\gamma) - (\alpha-1) \ln(2 \sqrt{5})+ 1}{\gamma^{j}}
\end{align*}
For $r \in (0, 1)$ this latter value is zero, and thus, by \eqref{eq:unltrametric_shift_words} and \eqref{eq:xyL}, we have 
\!$\displaystyle \sup_{z \in \{ x, y \}} \limsup_{n \to \infty} \psi_{z, n}(r) = 0$.
\end{proof}

\begin{proof}[Proof of \Cref{prop:curious}~\ref{prop:curious_2}\ref{prop:curious_2b}]
If $r \geq 1$, then we have that 
\begin{align*}
\limsup_{m \to \infty} q_{m}^{r}  \sum_{j = 1}^{a_{m+1}} \frac{1}{j  q_{m}}
\geq \limsup_{m \to \infty} q_{m}^{r-1} \sum_{j = 1}^{a_{m+1}} \frac{1}{j} 
\geq \limsup_{m \to \infty} \sum_{j = 1}^{a_{m+1}} \frac{1}{j}
\geq \limsup_{m \to \infty} \ln(a_{m+1}).
\end{align*}
Since $A_{\alpha}(\theta) > 0$, the continued fraction entries of $\theta$ are unbounded and so this latter value is infinite.  Combining this with \eqref{eq:unltrametric_shift_words} and \eqref{eq:xyL} gives the required result.
\end{proof}

\begin{proof}[Proof of \Cref{prop:curious}~\ref{prop:curious_3}\ref{prop:curious_3a}]
Using \Cref{rmk:exp_q_k} and the assumption that $t > 1$, we conclude the following chain of inequalities.
\begin{align*}
\limsup_{m \to \infty} q_{m}^{t  r}  \sum_{k = m}^{\infty}  \sum_{j = 1}^{a_{k+1}} \frac{1}{(j  q_{k} + q_{k-1} - \mathds{1}_{2\mathbb{Z}}(k-m))^{t}}
&\leq \limsup_{m \to \infty} q_{m}^{t  r}  \sum_{k = m}^{\infty}  \frac{1}{q_{k}^{t}}  \sum_{j = 1}^{a_{k+1}} \frac{1}{j^{t}}\\
&\leq \limsup_{m \to \infty} q_{m}^{t  r}  \sum_{k = m}^{\infty}  \frac{t}{(t-1)  q_{k}^{t}}&\\
&\leq \limsup_{m \to \infty} \frac{t}{t - 1}  q_{m}^{t  r}  \sum_{k = m}^{\infty}  \frac{1}{q_{k}^{t}}\\
&\leq \limsup_{m \to \infty} \frac{t}{t - 1}  q_{m}^{t  r}  \sum_{j = 0}^{\infty}  \frac{(2 \sqrt{5})^{t}}{q_{m}^{t}\gamma^{j  t}}
= \limsup_{m \to \infty} \frac{t (2  \sqrt{5})^{t}  q_{m}^{t  (r - 1)}}{(t - 1)(1- \gamma^{-t})}
\end{align*}
For $r \in (0, 1)$ we observe that this latter value is zero and for $r = 1$ that it is finite.  This in tandem with \eqref{eq:unltrametric_shift_words} and \eqref{eq:xyL} yields that $\displaystyle \sup_{z \in \{ x, y \}} \limsup_{n \to \infty} \psi_{z, n}(1) < \infty$ and $\displaystyle \sup_{z \in \{ x, y \}} \limsup_{n \to \infty} \psi_{z, n}(r) = 0$, for $r \in (0, 1)$.
\end{proof}

\begin{proof}[Proof of \Cref{prop:curious}~\ref{prop:curious_3}\ref{prop:curious_3b}]
Observe that
\begin{align*}
\limsup_{m \to \infty} q_{m}^{rt}  \sum_{j = 1}^{a_{m+1}} \frac{1}{(j  q_{m})^{t}}
\geq \limsup_{m \to \infty} q_{m}^{t(r - 1)} \sum_{j = 1}^{a_{m+1}} \frac{1}{j^{t}}
&\geq \limsup_{m \to \infty} q_{m}^{t(r - 1)}  \frac{1 - (a_{m+1} + 1)^{1 - t}}{t-1}\\
&\geq \frac{1 - 2^{1-t}}{t-1} \limsup_{m \to \infty} q_{n}^{t(r-1)}.
\end{align*}
This with \eqref{eq:unltrametric_shift_words} and \eqref{eq:xyL} yields \!\! $\displaystyle \sup_{z \in \{ x, y \}} \limsup_{n \to \infty} \psi_{z, n}(1) > 0$ and \!\!$\displaystyle \sup_{z \in \{ x, y \}} \limsup_{n \to \infty} \psi_{z, n}(r) = \infty$, for $r > 1$.
\end{proof}

For our next proposition we require the following notation. As above let $X$ denote a Sturmian subshift of slope $\theta = [0; a_{1} + 1, a_{2}, \dots ]$ and let $x, y \in X$ denote the unique infinite words with $x\lvert_{\lvert \mathcal{R}_{n} \rvert} = \mathcal{R}_{n}$ and $y\lvert_{\lvert \mathcal{L}_{n} \rvert} = \mathcal{L}_{n}$, for all $n \in \mathbb{N}$.  By \Cref{prop:blackboard}, we have
\begin{align}\label{eq:unltrametric_shift_words_2}
\begin{aligned}
&x\lvert_{\lvert \mathcal{R}_{n} \rvert + j \lvert \mathcal{L}_{n+1} + 1}
=\mathcal{R}_{n} \underbrace{\mathcal{L}_{n + 1} \dots \mathcal{L}_{n+1}}_{j}(1),\\ 
&y\lvert_{\lvert \mathcal{L}_{n} \rvert + i \lvert \mathcal{R}_{n} \rvert + 1} 
= \mathcal{L}_{n} \underbrace{\mathcal{R}_{n} \dots \mathcal{R}_{n}}_{i}(0),\\
&\sigma^{(a_{2(n+1)} -j +1) \lvert \mathcal{L}_{n + 1} \rvert}(y)\lvert_{\lvert \mathcal{R}_{n} \rvert + j \lvert \mathcal{L}_{n+1} + 1}
 = \mathcal{R}_{n} \underbrace{\mathcal{L}_{n + 1} \dots \mathcal{L}_{n+1}}_{j}(0),\\
&\sigma^{(a_{2(n+1)-1} -i +1) \lvert \mathcal{R}_{n} \rvert}(x)\lvert_{\lvert \mathcal{L}_{n} \rvert + i \lvert \mathcal{R}_{n} \rvert + 1}
= \mathcal{L}_{n} \underbrace{\mathcal{R}_{n} \dots \mathcal{R}_{n}}_{i}(1).
\end{aligned}
\end{align}
for all $n \in \mathbb{N}$, $j \in \{ 1, 2, \dots, a_{2(n+1)} \}$ and $i \in \{ 1, 2, \dots, a_{2(n+1)-1} \}$.

The words $\sigma^{(a_{2(n+1)} + 1) \lvert \mathcal{L}_{n + 1} \rvert}(y)$ and $\sigma^{\lvert \mathcal{L}_{n} \rvert}(y)$ are distinct and as we will shortly see, although the ultra metric distance between these words and $x$ are equal, the respective spectral distances are not equal; the same holds for  $\sigma^{(a_{2(n+1)-1} + 1) \lvert \mathcal{R}_{n} \rvert}(x)$ and $\sigma^{\lvert \mathcal{R}_{n} \rvert}(x)$ and their ultra metric distance, respectively their spectral distance, to $y$.

For $n \in \mathbb{N}$, $j \in \{ 1, 2, \dots, a_{2(n+1)} \}$ and $i \in \{ 1, 2, \dots, a_{2(n+1)-1} \}$, we have that
\begin{align}\label{eq:unltrametric_shift-2}
d_{\delta}(x, \sigma^{(a_{2(n+1)} -j +1) \lvert \mathcal{L}_{n + 1} \rvert}(y)) = \delta_{j q_{2(n+1)-1} + q_{2n}}
\quad \text{and} \quad
d_{\delta}(\sigma^{(a_{2(n+1)-1} -i +1) \lvert \mathcal{R}_{n} \rvert}(x), y) = \delta_{i q_{2n} + q_{2n-1}},
\end{align}
and combining \Cref{cor:branching_points} and \eqref{eq:spectral_metric_bound}, we obtain that
\begin{align}\label{eq:xyL-2}
\begin{aligned}
d_{s, \delta}(x, &\sigma^{(a_{2(n+1)} -j +1) \lvert \mathcal{L}_{n + 1} \rvert}(y))\\
&= \sum_{l \geq j}^{a_{2(n +1)}} \delta_{l q_{2(n+1)-1} + q_{2n}} + \hspace{-0.5em} \sum_{k = 2(n + 1)}^{\infty} \sum_{l = 1}^{a_{k+1}} \delta_{l q_{k} + q_{k - 1} - \mathds{1}_{2\mathbb{Z}}(k-2(n+1)) (a_{2(n+1)} - j + 1) q_{2(n+1)-1}}\\
d_{s, \delta}(y, &\sigma^{(a_{2(n+1)-1} -i +1) \lvert \mathcal{R}_{n} \rvert}(x))\\
&= \sum_{l \geq i}^{a_{2(n+1)-1}} \delta_{l q_{2n} + q_{2n-1}} + \hspace{-0.75em}\sum_{k = 2(n+1)-1}^{\infty} \sum_{l = 1}^{a_{k+1}} \delta_{l q_{k} + q_{k - 1} - \mathds{1}_{2\mathbb{Z}}(k-(2(n+1)-1)) (a_{2(n+ 1)-1} - i + 1) q_{2n}}.
\end{aligned}
\end{align}
For $n \in \mathbb{N}$, $j \in \{ 1, 2, \dots, a_{2(n+1)} \}$, $i \in \{ 1, 2, \dots, a_{2n+1} \}$ and $r > 0$ set
\begin{align*}
\psi_{x, n}^{(j)}(r) \coloneqq \frac{d_{s, \delta}(x, \sigma^{(a_{2(n+1)} -j +1) \lvert \mathcal{L}_{n + 1} \rvert}(y))}{d_{\delta}(x, \sigma^{(a_{2(n+1)} -j +1) \lvert \mathcal{L}_{n + 1} \rvert}(y))^{r}}
\quad \text{and} \quad
\psi_{y, n}^{(i)}(r) \coloneqq \frac{d_{s, \delta}(\sigma^{(a_{2(n+1)-1} -i +1) \lvert \mathcal{R}_{n} \rvert}(x), y)}{d_{\delta}(\sigma^{(a_{2(n+1)-1} -i +1) \lvert \mathcal{R}_{n} \rvert}(x), y)^{r}}.
\end{align*}
 Notice that $\displaystyle{\limsup_{n \to \infty} \psi_{z, n}^{(j)}(r) \leq \psi_{z}(r)  \leq \psi(r)}$ for $z\in\{x,y\}$.

\begin{proposition}\label{prop:curious-2}
Let $\alpha > 1$, let $t > 1 - 1/\alpha$ and set $\delta \coloneqq (n^{-t})_{n \in \mathbb{N}}$.
\begin{enumerate}[label=(\arabic*),leftmargin=2.5em]
\item\label{prop:curious-2a} If $A_{\alpha}(\theta) < \infty$, then
\begin{align*}
\quad \sup_{z \in \{ x, y \}} \limsup_{n \to \infty} \; \sup \; \left\{ \psi_{z, n}^{(j)}(r) \colon  \; j \in \{ 1, \dots, a_{2(n+1)-\mathds{1}_{y}(z)} \} \right\}
\begin{cases}
= 0 &\!\!\text{if} \; 0 < r < 1 - (\alpha - 1)/(\alpha  t),\\ 
< \infty &\!\!\text{if} \; r = 1 - (\alpha - 1)/(\alpha  t).
\end{cases}
\end{align*}
\item\label{prop:curious-2b} If $A_{\alpha}(\theta) > 0$, then
\begin{align*}
\quad\sup_{z \in \{ x, y \}} \limsup_{n \to \infty} \; \sup \; \left\{ \psi_{z, n}^{(j)}(r) \colon  \; j \in \{ 1, \dots, a_{2(n+1)-\mathds{1}_{y}(z)} \} \right\}
\begin{cases}
= \infty &\!\!\text{if} \; r > 1 - (\alpha - 1)/(\alpha  t),\\ 
> 0 &\!\!\text{if} \; r = 1 - (\alpha - 1)/(\alpha  t).
\end{cases}
\end{align*}
\end{enumerate}
\end{proposition}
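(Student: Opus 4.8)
The strategy is to adapt, almost verbatim, the argument used for \Cref{prop:curious}, the new ingredient being an additional optimisation over the index $j$ (resp.\ $i$). It suffices to treat $z = x$; the case $z = y$ is identical after interchanging the roles of the triples $(q_{2(n+1)-1}, q_{2n}, a_{2(n+1)})$ and $(q_{2n}, q_{2n-1}, a_{2(n+1)-1})$. Abbreviating $m \coloneqq 2(n+1)$ and combining \eqref{eq:unltrametric_shift-2} with \eqref{eq:xyL-2}, one has, for $\delta_{l} = l^{-t}$,
\begin{align*}
\psi_{x, n}^{(j)}(r) = (j q_{m-1} + q_{m-2})^{t r} \left[ \sum_{l = j}^{a_{m}} \frac{1}{(l q_{m-1} + q_{m-2})^{t}} + \sum_{k = m}^{\infty} \sum_{l = 1}^{a_{k+1}} \frac{1}{(l q_{k} + q_{k-1} - \mathds{1}_{2 \mathbb{Z}}(k - m)(a_{m} - j + 1) q_{m-1})^{t}} \right].
\end{align*}
I will split this into: (a) the $l = j$ summand of the first sum, equal to $(j q_{m-1} + q_{m-2})^{t(r-1)}$, which tends to $0$ uniformly in $j$ as $n \to \infty$ because $r < 1$; (b) the double sum over $k \geq m$, estimated exactly as in \eqref{eq:copy-paste-argument} via $A_{\alpha}(\theta) < \infty$ and the exponential lower bound $q_{k+i} > q_{k} \gamma^{i}/(2\sqrt{5})$ from \Cref{rmk:exp_q_k}, which, together with the prefactor (bounded by $q_{m}^{t r}$ since $j q_{m-1} + q_{m-2} \leq q_{m}$), is of order $q_{m}^{t r - 1 + \alpha(1-t)}$ and hence $o(1)$ because $r \leq 1 - (\alpha-1)/(\alpha t) < \alpha - (\alpha-1)/t$; and (c) the genuinely new term $(j q_{m-1} + q_{m-2})^{tr} \sum_{l = j+1}^{a_{m}} (l q_{m-1} + q_{m-2})^{-t}$.

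For (c) I would use $(l q_{m-1} + q_{m-2})^{-t} \leq (l q_{m-1})^{-t}$ and estimate $\sum_{l=j}^{a_{m}} l^{-t}$ by integral comparison, treating $t < 1$, $t = 1$, $t > 1$ separately; this yields a bound of the shape $(j q_{m-1} + q_{m-2})^{tr}\, q_{m-1}^{-t}$ times a power of $a_{m}$ (and, when $t = 1$, an extra factor $1 + \ln(a_{m}/j)$). I then take the supremum over $j \in \{1, \dots, a_{m}\}$: for $t \neq 1$ the resulting bound is monotone in $j$, so the supremum is attained at $j = a_{m}$ (or at $j = 1$, where it is $o(1)$); for $t = 1$ one writes $j = s\, a_{m}$ and uses that $s \mapsto s^{r}(1 - \ln s)$ is bounded on $(0, 1]$, which removes the logarithm. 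Inserting the inequalities $a_{m} \leq c\, q_{m-1}^{\alpha - 1}$ and $q_{m} \leq (c+1)\, q_{m-1}^{\alpha}$ (valid for all large $m$ since $A_{\alpha}(\theta) < \infty$), piece (c) is in every case at most a constant times $q_{m-1}^{\alpha t (r - 1) + \alpha - 1}$. The exponent $\alpha t(r-1) + \alpha - 1$ is negative for $r < 1 - (\alpha-1)/(\alpha t)$ and zero for $r = 1 - (\alpha-1)/(\alpha t)$, so combining (a), (b), (c) with $\limsup_{n} \sup_{j} \psi_{z,n}^{(j)}(r) \leq \psi(r)$ and with the analogous estimate for $z = y$ proves part~\ref{prop:curious-2a}. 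For part~\ref{prop:curious-2b}, since $A_{\alpha}(\theta) > 0$ there is an infinite set of indices $N$ with $a_{N} q_{N-1}^{1 - \alpha} > A_{\alpha}(\theta)/2$; it contains infinitely many even or infinitely many odd members, and accordingly I pass to $z = x$ along the subsequence where $m = 2(n+1)$ runs through the even witnesses (so $a_{m}$ is large), or to $z = y$ in the odd case. Fixing a small $c_{0} \in (0, 1/2)$ adapted to the integral estimate in the relevant range of $t$ and choosing $j \coloneqq \lceil c_{0} a_{m} \rceil$, one has $j q_{m-1} + q_{m-2} \asymp a_{m} q_{m-1} \asymp q_{m}$; discarding the sum over $k \geq m$ and retaining only the summands $l \in \{j, \dots, a_{m}\}$ of the first sum gives, by the same integral comparison, $\psi_{x, n}^{(j)}(r) \gtrsim a_{m}^{tr + 1 - t}\, q_{m-1}^{t(r-1)}$. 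Using $a_{m} > (A_{\alpha}(\theta)/2)\, q_{m-1}^{\alpha - 1}$ along the subsequence together with $t r + 1 - t > 0$ (which holds since $1 - (\alpha-1)/(\alpha t) > 1 - 1/t = (t-1)/t$), this becomes $\gtrsim q_{m-1}^{\alpha t (r-1) + \alpha - 1}$, unbounded if $r > 1 - (\alpha-1)/(\alpha t)$ and bounded away from $0$ if $r = 1 - (\alpha-1)/(\alpha t)$; this gives part~\ref{prop:curious-2b}.

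The principal difficulty lies entirely in piece (c): pinning down the exact exponent of $q_{m-1}$ forces one to optimise over the intermediate scale $j$ — this is most conspicuous when $t = 1$, where neither $j = 1$ nor $j = a_{m}$ detects the critical exponent $1 - (\alpha-1)/(\alpha t)$ — while simultaneously keeping track of the three regimes of $t$, of the relations $a_{m} \asymp q_{m-1}^{\alpha-1}$ and $q_{m} \asymp a_{m} q_{m-1}$, and of the sign of $t r + 1 - t$. The remaining estimates are routine adaptations of those already carried out for \Cref{prop:curious}.
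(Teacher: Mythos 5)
Your proposal is correct and follows essentially the same route as the paper: the infinite tail of the double sum is disposed of exactly as in \Cref{lem:infinite_median_sum_t_1}, and the remaining finite sum is the quantity $\phi(m,j,r,t)$ of \eqref{eq:phi-1-2-3}, which the paper likewise optimises over $j$ in the three regimes $t<1$, $t=1$, $t>1$ (using $a_{m}\leq c\,q_{m-1}^{\alpha-1}$, the maximisation of $x\mapsto x^{r}(\ln a_{m}-\ln x)$ at $t=1$, and the choice $j\asymp a_{m}/2$ along a witness subsequence for the lower bound). No substantive differences.
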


We divide the proof of each part of the above proposition into three cases: the first case when $t \in (1 - 1/\alpha, 1)$, the second case when $t = 1$ and the third case when $t > 1$.  We will also use the following lemma and remark in the proof of \Cref{prop:curious-2}.

\begin{lemma}\label{lem:infinite_median_sum_t_1}
Let $\alpha > 1$ and let $t > 1 - 1/\alpha$.  Let $X$ denote a Sturmian subshift of slope $\theta \in [0, 1/2]$ where $A_{\alpha}(\theta) < \infty$. Given $r \in (0,  \min \{ 1, \alpha - (\alpha - 1)/t \})$ and given $\epsilon > 0$, there exists $M = M_{t, r} \in \mathbb{N}$ such that for all $m \geq M$ and $j \in \{ 1, 2, \dots, a_{m+2} \}$,
\begin{align*}
0 < 
(j q_{m+1} + q_{m})^{t r} 
\sum_{k = m + 2}^{\infty} \sum_{l = 1}^{a_{k+1}} \frac{1}{(l q_{k} + q_{k - 1} - \mathds{1}_{2\mathbb{Z}}(k-(m+2)) (a_{m+2} - j + 1) q_{m+1})^{t}} < \epsilon.
\end{align*}
\end{lemma}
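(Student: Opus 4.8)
I would first dispatch the strict positivity, which needs no work: for $j\in\{1,\dots,a_{m+2}\}$ one has $a_{m+2}-j+1\in\{1,\dots,a_{m+2}\}$, so $(a_{m+2}-j+1)q_{m+1}\le a_{m+2}q_{m+1}=q_{m+2}-q_m<q_{m+2}$, and hence every denominator occurring in the double sum is a strictly positive integer: for $k=m+2,\ l=1$ the denominator equals $jq_{m+1}+q_m>0$ (using $q_{m+2}=a_{m+2}q_{m+1}+q_m$), and for $k\ge m+3$ the subtracted quantity, when present, is $<q_{m+2}\le q_{k-1}$, so the denominator exceeds $lq_k$. Thus the expression in the statement is a finite positive number, and the whole task is to produce one $M$ making it smaller than $\epsilon$ for all $m\ge M$ and all admissible $j$ at once.

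For the upper bound the plan is to reduce to the geometric estimate \eqref{eq:copy-paste-argument} from the proof of \Cref{prop:curious}. I would split the double sum $S_{m,j}$ into the single block $T^{(0)}_{m,j}$ coming from $k=m+2$ and the tail $T^{(1)}_{m,j}$ over $k\ge m+3$. In $T^{(1)}_{m,j}$ the observation above — that for $k\ge m+3$ the subtracted term $\mathds{1}_{2\mathbb{Z}}(k-(m+2))(a_{m+2}-j+1)q_{m+1}$ is either absent (when $k-(m+2)$ is odd) or at most $q_{m+2}\le q_{k-1}$ (when $k-(m+2)$ is even, forcing $k\ge m+4$) — shows each denominator is $\ge lq_k$. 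Hence $T^{(1)}_{m,j}\le\sum_{k\ge m+3}q_k^{-t}\sum_{l=1}^{a_{k+1}}l^{-t}$, which is exactly the sum estimated in \eqref{eq:copy-paste-argument} with lower index shifted to $m+3$. Feeding in $a_{k+1}q_k^{1-\alpha}<c$ for all large $k$ (available since $A_\alpha(\theta)<\infty$), the identity $(\alpha-1)(1-t)-t=-(1-\alpha(1-t))$, the inequality $1-\alpha(1-t)>0$ (equivalent to $t>1-1/\alpha$), and the geometric lower bound $q_k>q_{m+3}\gamma^{k-(m+3)}/(2\sqrt5)$ of \Cref{rmk:exp_q_k}, I get $T^{(1)}_{m,j}\le c_1\, q_{m+2}^{-(1-\alpha(1-t))}$ for $m$ large, with $c_1$ independent of $j$.

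For $T^{(0)}_{m,j}=\sum_{l=1}^{a_{m+3}}\bigl(lq_{m+2}+(j-a_{m+2})q_{m+1}\bigr)^{-t}$ the $l=1$ term is precisely $(jq_{m+1}+q_m)^{-t}$, while for $l\ge2$ the denominator is $\ge lq_{m+2}-(a_{m+2}-1)q_{m+1}>(l-1)q_{m+2}\ge\tfrac12 lq_{m+2}$, so those terms sum to at most $2^t q_{m+2}^{-t}\sum_{l\ge2}l^{-t}$, which by the same computation is $\le c_2\, q_{m+2}^{-(1-\alpha(1-t))}$. Collecting the pieces gives $S_{m,j}\le(jq_{m+1}+q_m)^{-t}+(c_1+c_2)q_{m+2}^{-(1-\alpha(1-t))}$ for all large $m$ and all admissible $j$.

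Finally I would multiply through by $(jq_{m+1}+q_m)^{tr}$ and use the sandwich $q_{m+1}\le q_{m+1}+q_m\le jq_{m+1}+q_m\le a_{m+2}q_{m+1}+q_m=q_{m+2}$ to obtain
\begin{align*}
(jq_{m+1}+q_m)^{tr}\,S_{m,j}\le q_{m+1}^{\,t(r-1)}+(c_1+c_2)\,q_{m+2}^{\,tr-(1-\alpha(1-t))}.
\end{align*}
Here $t(r-1)<0$ since $r<1$, and $tr-(1-\alpha(1-t))=tr-1+\alpha(1-t)<0$ since $r<\alpha-(\alpha-1)/t$; as $q_{m+1},q_{m+2}\to\infty$ with $m$, the right-hand side tends to $0$ uniformly in $j$, and any $M$ past which it is $<\epsilon$ works. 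The only delicate point I anticipate is the parity bookkeeping for the shift term — checking that, apart from the single exceptional summand $(jq_{m+1}+q_m)^{-t}$, every denominator stays comparable to $lq_k$ — after which everything collapses to the routine geometric-series estimate already established in \eqref{eq:copy-paste-argument}.
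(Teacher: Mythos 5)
Your argument is correct and is built from the same raw ingredients as the paper's proof --- the bound $a_{k+1}\le c\,q_{k}^{\alpha-1}$ supplied by $A_{\alpha}(\theta)<\infty$, the geometric growth $q_{k+j}>q_{k}\gamma^{j}/(2\sqrt{5})$ of \Cref{rmk:exp_q_k}, and the two exponent inequalities $t(r-1)<0$ and $tr-(1-\alpha(1-t))<0$ encoded in $r<\min\{1,\alpha-(\alpha-1)/t\}$ --- but it is organised genuinely differently. The paper runs one long chain of inequalities over the entire double sum, replacing the prefactor by $q_{m+2}^{tr}$ at the outset and splitting into three cases according to whether $t$ lies in $(1-1/\alpha,1)$, equals $1$, or exceeds $1$, since the inner sum $\sum_{l}l^{-t}$ is of order $a_{k+1}^{1-t}$, of order $\ln a_{k+1}$, or bounded, respectively. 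You instead peel off the block $k=m+2$ and, inside it, the single summand $l=1$, whose denominator collapses to exactly $jq_{m+1}+q_{m}$; pairing that summand with the prefactor $(jq_{m+1}+q_{m})^{tr}$ yields $(jq_{m+1}+q_{m})^{t(r-1)}\le q_{m+1}^{t(r-1)}$, which is precisely where the hypothesis $r<1$ is consumed. This isolation is a real gain in transparency: that exceptional summand is the one place where the wholesale replacement of the prefactor by $q_{m+2}^{tr}$ is too lossy when $j$ is small and $a_{m+2}$ is large, and your decomposition makes the uniformity in $j$ visible. The one point to tighten is your claim that both remainders are $O\bigl(q_{m+2}^{-(1-\alpha(1-t))}\bigr)$ for \emph{all} $t>1-1/\alpha$: that exponent comes from $\sum_{l}l^{-t}\lesssim a^{1-t}$ and is correct only for $t<1$ (the regime of \eqref{eq:copy-paste-argument}, which is all you cite); at $t=1$ an extra factor $\ln q_{m+2}$ appears, and for $t>1$ the natural bound is $O(q_{m+2}^{-t})$. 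Neither variant damages the conclusion --- the resulting exponents $tr-1$ (which absorbs the logarithm) and $t(r-1)$ are still negative --- but the three regimes do require the separate one-line estimates that the paper writes out explicitly, so you should not present the single exponent $-(1-\alpha(1-t))$ as uniform in $t$.
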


\begin{proof}[Proof of \Cref{lem:infinite_median_sum_t_1}]
The lower bound follows trivial since the quantities involved are non-negative.  Since $A_{\alpha}(\theta) < \infty$ there exists a constant $c > 1$ so that $a_{m+1} \leq c  q_{m}^{\alpha - 1}$, for all $m \in \mathbb{N}$, and hence, for all $j \in \{ 1, 2, \dots, a_{m+2} \}$, we have the following chain of inequalities, where $I_{\alpha}$ denotes the interval $(-1/\alpha, 1)$.
\begin{align*}
& (j q_{m+1} + q_{m})^{t r}
\sum_{k = m + 2}^{\infty} \sum_{l = 1}^{a_{k+1}} \frac{1}{(l q_{k} + q_{k - 1} - \mathds{1}_{2\mathbb{Z}}(k - (m+2)) (a_{m+2} - j + 1) q_{m+1})^{t}}\\
& \leq \begin{cases}
\displaystyle \frac{1+2^{t}}{1 - t}  q_{m+2}^{t  r} \!\! \sum_{k = m+2}^{\infty}
\frac{(a_{k+1} q_{k} + q_{k - 1} - \mathds{1}_{2\mathbb{Z}}(k - (m+2)) (a_{m+2} - j + 1) q_{m+1})^{1-t}}{q_{k}}
&\hspace{-0.275em}\text{if} \; t \in I_{\alpha}\\[1em]
\parbox{37em}{$\displaystyle 3 q_{m+2}^{r} \!\! \sum_{k = m+2}^{\infty} \frac{\ln(a_{k+1} q_{k} + q_{k - 1} - \mathds{1}_{2\mathbb{Z}}(k - (m+2)) (a_{m+2} - j + 1) q_{m+1})}{q_{k}}$} &\hspace{-0.275em}\text{if} \;  t = 1\\[1em]
\displaystyle
\frac{1 + 2^{t}}{t-1}  (j q_{m+1} + q_{m})^{t r} \!\! \sum_{k = m+2}^{\infty}
\frac{(q_{k} + q_{k - 1} - \mathds{1}_{2\mathbb{Z}}(k - (m+2)) (a_{m+2} - j + 1) q_{m+1})^{1-t}}{q_{k}}
&\hspace{-0.275em}\text{if} \; t > 1
\end{cases}\\
& \leq \begin{cases}
\displaystyle \frac{1 + 2^{t}}{1 - t}  q_{m+2}^{t  r} \sum_{k = m+2}^{\infty}
\frac{(a_{k+1} q_{k} + q_{k - 1} )^{1-t}}{q_{k}}
&\text{if} \; t \in I_{\alpha}\\[1em]
\parbox{37em}{$\displaystyle 3 q_{m+2}^{t  r} \sum_{k = m+2}^{\infty} \frac{\ln(a_{k+1} q_{k} + q_{k - 1})}{q_{k}}$} &\text{if} \;  t = 1\\[1em]
\displaystyle
\frac{1 + 2^{t}}{t-1} \left( \frac{(j q_{m+1} + q_{m})^{1-t+tr}}{q_{m+2}} + 
q_{m+2}^{t r} \sum_{k = m+3}^{\infty}
\frac{q_{k}^{1-t}}{q_{k}} 
\right)
&\text{if} \; t > 1
\end{cases}\\
& \leq \begin{cases}
\displaystyle \frac{1 + 2^{t}}{1 - t}  c^{1-t} 2^{1-t}  q_{m+2}^{t  r} \sum_{k = m+2}^{\infty} \frac{1}{q_{k}^{\alpha  t - (\alpha - 1)}} &\text{if} \; t \in I_{\alpha}\\[1em]
\parbox{37em}{$\displaystyle 3 q_{m+2}^{r} \sum_{k = m+2}^{\infty} \frac{\ln(2c) + \alpha \ln(q_{k})}{q_{k}}$} &\text{if} \;  t = 1\\[1em]
\displaystyle
\frac{1 + 2^{t}}{t-1} \left( (j q_{m+1} + q_{m})^{t(r-1)} +
 q_{m+2}^{t  r} \sum_{k = m+3}^{\infty} \frac{1}{q_{k}^{t}} \right)  
&\text{if} \; t > 1
\end{cases}\\
& \leq \begin{cases}
\displaystyle \frac{(1 + 2^{t})  c^{1-t}  2^{1-t}  2  \sqrt{5} }{1 - t}  q_{m+2}^{t  r - \alpha  t + (\alpha - 1)} \sum_{i = 0}^{\infty} \frac{1}{\gamma^{i  (\alpha  t - (\alpha - 1))}} &\text{if} \; t \in I_{\alpha}\\[1em]
\parbox{37em}{$\displaystyle 6  \sqrt{5}  q_{m+2}^{r - 1} \ln(q_{m+2}) \sum_{i = 0}^{\infty} 
\frac{\ln(2c) + 1 + \alpha i  \ln(\gamma)}{\gamma^{i }}$} &\text{if} \;  t = 1\\[1em]
\displaystyle \frac{1 + 2^{t}}{t - 1}  \left( (q_{m+1} + q_{m})^{t(r-1)}  + 2 \sqrt{5} q_{m+2}^{t  (r - 1)} \sum_{i = 1}^{\infty} \frac{1}{\gamma^{i  t}} \right)&\text{if} \; t > 1\\
\end{cases}
\end{align*}
In the last inequality we have used the result given in \Cref{rmk:exp_q_k} and the fact, for $x > \mathrm{e}^{1}$, that the function $x \mapsto \ln(x)/x$ is strictly decreasing.  Since $r \in (0, \min\{1, \alpha - (\alpha - 1)/t \})$, $\gamma > 1$, $t > 1 - 1/\alpha$ and the sequence $(q_{n})_{n \in \mathbb{N}}$ is unbounded and monotonically increasing, the result follows.
\end{proof}

Given $m \in \mathbb{N}$, $j \in \{ 1, 2, \dots, a_{m+2}\}$, $r > 0$ and $t > 0$ set
\begin{align}\label{eq:phi-1-2-3}
\phi(m, j, r, t) \coloneqq (jq_{m+1} + q_{m})^{t r} 
\sum_{l = j}^{a_{m + 2}} \frac{1}{(l q_{m+1} + q_{m})^{t}}.
\end{align}
By \eqref{eq:unltrametric_shift-2}, \eqref{eq:xyL-2} and \Cref{lem:infinite_median_sum_t_1}, to prove \Cref{prop:curious-2}, it is sufficient to show, if $A_{\alpha}(\theta) < \infty$, then
\begin{align*}
\limsup_{m \to \infty} \sup \left\{ \phi(m, j, r, t) \colon j \in \{ 1, 2, \dots, a_{m+2} \} \right\} &\begin{cases}
= 0 & \text{if} \; 0 < r < 1 - (\alpha - 1)/(\alpha  t),\\ 
< \infty & \text{if} \; r = 1 - (\alpha - 1)/(\alpha  t),
\end{cases}
\intertext{and if $A_{\alpha}(\theta) > 0$, then}
\limsup_{m \to \infty} \sup \left\{ \phi(m, j, r, t) \colon j \in \{ 1, 2, \dots, a_{m+2} \} \right\} &\begin{cases}
= \infty & \text{if} \; r > 1 - (\alpha - 1)/(\alpha  t),\\ 
> 0 & \text{if} \; r = 1 - (\alpha - 1)/(\alpha  t).
\end{cases}
\end{align*}

\begin{proof}[Proof of \Cref{prop:curious-2}~\ref{prop:curious-2a}]
Case $t \in (1 - 1/\alpha, 1)$: Since $A_{\alpha}(\theta) < \infty$, there exists a constant $c > 1$ so that $a_{m+1} \leq c q_{m}^{\alpha - 1}$, for all $m \in \mathbb{N}$.  With this at hand, for $0 < r \leq 1 - (\alpha - 1)/(\alpha t)$, we may deduce the following chain of inequalities.
\begin{align}\label{eq:copy-paste-argument-2}
\begin{aligned}
\limsup_{m \to \infty} \!\!\sup_{1 \leq j \leq a_{m+2}} \!\!\phi(m, j, r, t)
&\leq \limsup_{m \to \infty} q_{m+2}^{rt} \sum_{l = 1}^{a_{m + 2}} \frac{1}{(l q_{m+1}+ q_{m})^{t}}\\
&\leq \limsup_{m \to \infty} \frac{1+2^{t}}{1-t} q_{m+2}^{rt} \frac{(a_{m+1}q_{m+1}+q_{m})^{1-t}}{q_{m+1}}\\
&\leq \limsup_{m \to \infty} \frac{1+2^{t}}{1-t} q_{m+2}^{rt}  \frac{q_{m+2}^{1-t}}{q_{m+1}}\\
&\leq \limsup_{m \to \infty} \frac{1+2^{t}}{1-t} (2c)^{1- t(1-r)} q_{m+1}^{\alpha- \alpha t(1-r) - 1}
\end{aligned}
\end{align}
This in tandem with the facts that $1 - (\alpha- 1)/(\alpha t) < \alpha - (\alpha - 1)/t$ if and only if $t > 1 - 1/\alpha$ and that $(q_{n})_{n \in \mathbb{N}}$ is an unbounded monotonic sequence, yields the result.

Case $t = 1$: Since $A_{\alpha}(\theta) < \infty$, there exists a constant $c > 1$ so that $a_{m+1} \leq c q_{m}^{\alpha - 1}$, for all $m \in \mathbb{N}$, and since, for $r > 0$, the function $x \mapsto x^{r} \left( \ln(a_{m + 2}) - \ln(x) \right)$, with domain $[0, \infty)$, is maximised at $x = a_{m+2} \mathrm{e}^{-1/r}$, we have, for $ 0 < r \leq  1 - (\alpha - 1)/(\alpha t) = 1/\alpha$, that
\begin{align*}
\limsup_{m \to \infty} \sup_{1 \leq j \leq a_{m+2}} \phi(m, j, r, t)
&\leq \limsup_{m \to \infty} \sup_{1 \leq j \leq a_{m+2}} q_{m}^{r - 1} + 2^{r} q_{m + 1}^{r - 1}j^{r} \sum_{l = j +1}^{a_{m+2}} \frac{1}{l}\\
&\leq \limsup_{m \to \infty} \sup_{1 \leq j \leq a_{m+2}} q_{m}^{r - 1} + 2^{r} q_{m + 1}^{r - 1}j^{r} (\ln(a_{m+2}) - \ln(j))\\
&\leq \limsup_{m \to \infty} q_{m}^{r - 1} + \frac{2^{r} \mathrm{e}^{-1}}{r} q_{m + 1}^{r - 1} a_{m+2}^{r}\\
&\leq \limsup_{m \to \infty} q_{m}^{r - 1} + \frac{2^{r} \mathrm{e}^{-1}c^{r}}{r} q_{m + 1}^{\alpha r - 1}.
\end{align*}
This in tandem with the fact that $(q_{n})_{n \in \mathbb{N}}$ is a monotonic unbounded sequence, yields the result.

Case $t >1$:
Since $A_{\alpha}(\theta) < \infty$, there is a constant $c > 1$ with $a_{m+1} \leq c q_{m}^{\alpha - 1}$, for all $m \in \mathbb{N}$.  Further, since $0 < r \leq 1- (\alpha - 1)/(\alpha t)$  and since $(q_{n})_{n \in \mathbb{N}}$ is an unbounded monotonic sequence, we may deduce the following chain of inequalities.
\begin{align*}
&\limsup_{m \to \infty} \sup_{1 \leq j \leq a_{m+2}} \phi(m, j, r, t)\\
&\leq \limsup_{m \to \infty} \sup_{1 \leq j \leq a_{m+2}} q_{m}^{t(r-1)} +  2^{tr} q_{m+1}^{t(r -1)} j^{t r} \sum_{l = j + 1}^{a_{m + 2}} \frac{1}{l^{t}}\\
&\leq \limsup_{m \to \infty} \sup_{1 \leq j \leq a_{m+2}} q_{m}^{t(r-1)} +  \frac{2^{tr}}{t - 1} q_{m+1}^{t(r -1)} j^{t(r-1)+1}\\\displaybreak[1]
&\leq \begin{cases}
\parbox{22.5em}{$\displaystyle \limsup_{m \to \infty} \; q_{m}^{t(r-1)} +  \frac{2^{tr}}{t - 1} q_{m+1}^{t(r -1)}$} & \text{if} \; r \leq 1 - 1/t\\[1em]
\displaystyle \limsup_{m \to \infty} \; q_{m}^{t(r-1)} +  \frac{2^{tr}}{t - 1} q_{m+1}^{t(r -1)} a_{m+2}^{t(r-1)+1} & \text{if} \; 1 - 1/t < r \leq 1 - (\alpha  -1)/(\alpha t)
\end{cases}\\
&\leq \begin{cases}
\displaystyle \limsup_{m \to \infty} \; q_{m}^{t(r-1)} +  \frac{2^{tr}}{t - 1} q_{m+1}^{t(r -1)} & \text{if} \; r \leq 1 - 1/t\\[1em]
\parbox{22.5em}{$\displaystyle \limsup_{m \to \infty} \; q_{m}^{t(r-1)} +  \frac{2^{tr} c^{t(r-1)+1}}{t - 1} q_{m+1}^{t(r -1)} q_{m+1}^{(\alpha-1)(t(r-1)+1)}$} & \text{if} \; 1 - 1/t < r \leq 1 - (\alpha  -1)/(\alpha t)
\end{cases}\\
&\leq \begin{cases}
\parbox{22.5em}{$\displaystyle \limsup_{m \to \infty} \; q_{m}^{t(r-1)} +  \frac{2^{tr}}{t - 1} q_{m+1}^{t(r -1)}$} & \text{if} \; r < 1 - 1/t\\[1em]
\displaystyle \limsup_{m \to \infty} \; q_{m}^{t(r-1)} +  \frac{2^{tr} c^{t(r-1)+1}}{t - 1} q_{m+1}^{(\alpha - 1) + \alpha t (r - 1)} & \text{if} \; 1 - 1/t < r < 1 - (\alpha  -1)/(\alpha t)\\[1em]
\displaystyle \limsup_{m \to \infty} \; q_{m}^{t(r-1)} +  \frac{2^{tr} c^{t(r-1)+1}}{t - 1} & \text{if} \; r = 1 - (\alpha  -1)/(\alpha t)
\end{cases}
\end{align*}
This in tandem with the fact that $(q_{n})_{n \in \mathbb{N}}$ is an unbounded monotonic sequence, yields the result.
\end{proof}

\begin{proof}[Proof of \Cref{prop:curious-2}~\ref{prop:curious-2b}]
Case $t\in(1-1/\alpha,1)$:
Since $A_{\alpha}(\theta) > 0$, there is an increasing sequence of integers $\{ n_{k} \}_{k \in \mathbb{N}}$ with $2  a_{n_{k}+2} > A_{\alpha}(\theta) q_{n_{k}+1}^{\alpha-1} > 18$.  Combing this with the fact that $(q_{n})_{n \in \mathbb{N}}$ is an unbounded monotonic sequence, and setting $j_{m} = \lceil a_{n_{m}+2}/2 \rceil$, we have that
\begin{align*}
\limsup_{m \to \infty} \; \phi(n_{m}, j_{m}, r, t)
&\geq \limsup_{m \to \infty} \; ( \lceil a_{n_{m}+2}/2 \rceil q_{n_{m}+1} + q_{n_{m}})^{t r} \frac{q_{n_{m}+2}^{1-t} - ( \lceil a_{n_{m}+2}/2 \rceil q_{n_{m}+1} + q_{n_{m}})^{1 - t}}{(1-t) q_{n_{m} +1}}\\
&\geq \limsup_{m \to \infty}  \frac{1 - (2/3)^{1-t}}{2^{tr}(1-t)} \frac{q_{n_{m}+2}^{1-t(1-r)}}{q_{n_{m}+1}}\\
&\geq \limsup_{m \to \infty}  \frac{(1 - (2/3)^{1-t})  A_{\alpha}(\theta)^{1 - t(1-r)}}{2^{2tr + 1-t} (1-t)} q_{n_{m}+1}^{\alpha-\alpha t(1-r)-1}.
\end{align*}
This in tandem with the fact that $(q_{n})_{n \in \mathbb{N}}$ is an unbounded monotonic sequence, yields the results.

Case $t =1$: Since $A_{\alpha}(\theta) > 0$, there exists an increasing sequence of non-negative integers $\{ n_{k} \}_{k \in \mathbb{N}}$ so that $2 a_{n_{k}+2} > A_{\alpha}(\theta) q_{n_{k}+1}^{\alpha-1} > 18$.  Setting $j_{m} = \lceil a_{n_{m}+2}/2 \rceil$, we have that
\begin{align*}
\limsup_{m \to \infty} \phi(m, j_{m}, r, t) 
\geq& \limsup_{m \to \infty} \; \frac{(j_{n_{m}}  q_{n_{m}+1} + q_{n_{m}})^{r}}{q_{n_{m}+1}} \left( \ln(q_{n_{m} + 2}) - \ln(j_{n_{m}}  q_{n_{m} + 1} + q_{n_{m}}) \right)\\
\geq& \limsup_{m \to \infty} \; \frac{1}{2^r} \frac{q_{n_{m}+2}^{r}}{q_{n_{m}+1}} \left( \ln(q_{n_{m} + 2}) - \ln \left(\frac{2  q_{n_{m} + 2}}{3} \right) \right)\\
\geq& \limsup_{m \to \infty} \; 2^{-r} \ln(3/2)  a_{n_{m}+2}^{r} q_{n_{m}+1}^{r-1}\\
\geq& \limsup_{m \to \infty} \; 2^{-2r} \ln(3/2)  A_{\alpha}(\theta)^{r} q_{n_{m}+1}^{r \alpha-1}.
\end{align*}
This in tandem with the fact that $(q_{n})_{n \in \mathbb{N}}$ is an unbounded monotonic sequence, yields the results.

Case $t >1$: Since $A_{\alpha}(\theta) > 0$, there exists an increasing sequence of natural numbers $\{ n_{k} \}_{k \in \mathbb{N}}$ so that $2  a_{n_{k}+2} > A_{\alpha}(\theta) q_{n_{k}+1}^{\alpha-1} > 18$.  Setting $j_{m} = \lceil a_{n_{m}+2}/2 \rceil$, we have that
\begin{align*}
\limsup_{m \to \infty} \; \phi(n_{m}, j_{m}, r, t)
&\geq \limsup_{m \to \infty} \; ( \lceil a_{n_{m}+2}/2 \rceil q_{n_{m}+1} + q_{n_{m}})^{t r} \frac{( \lceil a_{n_{m}+2}/2 \rceil q_{n_{m}+1} + q_{n_{m}})^{1 - t} - q_{n_{m}+2}^{1-t}}{(t - 1) q_{n_{m} +1}}&\\
&\geq \limsup_{m \to \infty}  \frac{(2/3)^{1-t} - 1}{2^{tr}(t-1)} \frac{q_{n_{m}+2}^{1-t(1-r)}}{q_{n_{m}+1}}\\
&\geq \limsup_{m \to \infty}  \frac{((2/3)^{1-t} - 1)  A_{\alpha}(\theta)^{1 - t(1-r)}}{2^{2tr + 1 - t} (1-t)} q_{n_{m}+1}^{\alpha-\alpha t(1-r)-1}.
\end{align*}
This in tandem with the fact that $(q_{n})_{n \in \mathbb{N}}$ is an unbounded monotonic sequence, yields the results.
\end{proof}

\begin{proposition}\label{prop:lower_Holder_reg}
Let $\alpha > 1$ and let $X$ denote a Sturmian subshift of slope $\theta \in [0, 1/2]$.  Let $t > 1 - 1/\alpha$, set $\delta = (\delta_{n})_{n \in \mathbb{N}}$ with $\delta_{n} = n^{-t}$.  If $A_{\alpha}(\theta) < \infty$, that is there exists $c > 0$ so that $a_{m+1} \leq c  q_{m}^{\alpha - 1}$, for all $m \in \mathbb{N}$, then, for $r > 0$,
\begin{align*} 
\psi_{w}(r) 
\leq 2 (c + 2)^{tr} \sup_{z \in \{ x, y \}} \limsup_{n \to \infty} \; \sup \; \bigg\{ \psi_{z, n}^{(k)}(r) \colon k \in \{ 1, \dots, a_{2(n+1)-\mathds{1}_{y}(z)} \} \bigg\} \cup \bigg\{ \psi_{z, n}(\alpha r) \bigg\}.
\end{align*}
\end{proposition}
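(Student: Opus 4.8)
The plan is to reduce the general statement $\psi_w(r) < \infty$ (for arbitrary $w \in X$) to the special sequences along the ``branching directions'' $x$ and $y$ that were analysed in \Cref{prop:curious,prop:curious-2}. The key structural fact is that any $w \in X$ lies (up to a shift) inside the nested hierarchy of the words $\mathcal{R}_k, \mathcal{L}_k$, and that the right special prefixes of $w$ that govern the spectral metric are, by \Cref{cor:branching_points}, exactly those indexed by the sequence of integers $j q_{2k-1} + q_{2k-2}$ and $i q_{2l} + q_{2l-1}$. So first I would fix $w \in X$ and a sequence $v^{(n)} \to w$ in $d_\delta$, and write $N_n \coloneqq |w \vee v^{(n)}|$, so that $d_\delta(w, v^{(n)}) = \delta_{N_n} = N_n^{-t}$. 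The numerator $d_{s,\delta}(w, v^{(n)})$ splits, by \eqref{eq:spectral_metric_bound}, into $\delta_{N_n}$ plus the tail sum $\sum_{m > N_n} \overline{b}_m(w)\delta_m$ plus the analogous sum for $v^{(n)}$.

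Second, the main point is to bound the tail $\sum_{m > N} \overline{b}_m(w)\delta_m$ for a general $w$ by a constant times a corresponding tail along $x$ or $y$. The index $N$ lies between two consecutive ``scales'', say $q_{K} \leq N < q_{K+1}$ (or more precisely between consecutive right-special lengths of the relevant parity). Using \Cref{cor:branching_points} one sees the right-special indices $m > N$ contributing to $w$'s tail are precisely $\{ j q_{2k-1} + q_{2k-2} \}$ for the appropriate ranges of $k, j$, which is exactly the index set appearing in the sums \eqref{eq:xyL} and \eqref{eq:xyL-2}. The subtlety is the ``partial'' block at the current scale: $N$ need not equal one of the special lengths, so one must compare $\sum_{m>N}$ with a sum of the form $\psi_{z,n}^{(k)}$ (the ``median'' shifted words of \Cref{prop:curious-2}) for the partial block, and with $\psi_{z,n}$ (or $\psi_{z,n}(\alpha r)$ after rescaling $N$ versus $q_{2n}$) for the complete blocks further down the hierarchy. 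The factor $(c+2)^{tr}$ is the price of replacing $N$ by the nearest special length $j q_{2k-1} + q_{2k-2}$: since $a_{k+1} \leq c q_k^{\alpha-1}$, consecutive special lengths differ by a bounded multiplicative factor $q_{k+1}/q_k = a_{k+1} + q_{k-1}/q_k \leq c q_k^{\alpha-1} + 1$, but once divided by the $q_k$-scale and raised to the power $tr$ this is comparable to $(c+2)^{tr}$ after the change of exponent $r \mapsto \alpha r$ — this is exactly why $\psi_{z,n}(\alpha r)$, rather than $\psi_{z,n}(r)$, appears on the right. The factor $2$ then absorbs the two terms in the sum (the contribution of $w$'s prefixes and of $v^{(n)}$'s prefixes, plus the leading $\delta_{N_n}$ term which is $\leq d_\delta(w,v^{(n)})^r$ anyway for $r < 1$, or is otherwise dominated).

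Concretely, I would: (i) given $v^{(n)} \to w$, identify which of the two families $x$ or $y$ controls the prefix structure at the branching scale — both $w$ and $v^{(n)}$ agree on a prefix of length $N_n$, and beyond that the longest right-special prefix of $w$ (resp.\ $v^{(n)}$) is one of the $\mathcal{R}$- or $\mathcal{L}$-type words, so after a shift $\sigma^s$ the tail becomes literally one of the sums in \eqref{eq:xyL}/\eqref{eq:xyL-2}; (ii) bound $N_n$ below by the relevant $q_{2k}$-scale at the cost of the factor $(c+2)$; (iii) apply the definitions of $\psi_{z,n}^{(k)}$ and $\psi_{z,n}$ and pass to $\limsup$. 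The main obstacle, and the step requiring genuine care rather than routine estimation, is step (i): precisely matching an arbitrary convergent sequence $v^{(n)} \to w$ to the two canonical ``worst-case'' sequences, i.e.\ showing that the tail of right-special prefixes of any $z \in X$ past a given length is, after a shift, exactly one of the explicit geometric-type sums already estimated — this is where \Cref{cor:branching_points} together with \Cref{rmk:BFMS-2002} (uniqueness of the right special word per length) does the heavy lifting, since it forces the right special prefixes to sit at the prescribed positions $j q_{2k-1} + q_{2k-2}$ regardless of which $z$ we started from. Once that combinatorial identification is in place, the inequality follows by collecting the bounded multiplicative losses and the two summands.
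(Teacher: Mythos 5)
Your proposal follows essentially the same route as the paper's proof: the paper bounds the tail $\sum_{k \geq n} \overline{b}_{k}(w) \delta_{k}$ for an arbitrary $w$ by the corresponding tails along $x$ and $y$ (its inequality \eqref{eq:sum_key}), restricts attention to comparison points $w^{(n)}$ branching at the right-special prefix lengths $m(n)$ of $w$, and pays the factor $(c+2)^{tr}$ together with the exponent $\alpha r$ precisely in the cases where the next right-special index jumps a full scale $q_{k} \mapsto q_{k+1} \leq (c+2)q_{k}^{\alpha}$ --- all as you describe. One small correction: the common prefix length of two distinct points of $X$ is automatically a right-special length of $w$ (both one-letter continuations lie in the language), which is what justifies the paper's closing reduction of an arbitrary convergent sequence to a subsequence of $(w^{(n)})_{n \in \mathbb{N}}$; the genuine mismatch to control is only between $w$'s right-special positions and the canonical positions $j q_{2k-1} + q_{2k-2}$ of $x$ and $y$, which the paper handles with the four-case comparison you anticipate.
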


\begin{proof}
Let $w = (w_{1}, w_{2}, \dots ) \in X$ be fixed and let $n \geq 2$ denote a natural number with $\overline{b}_{n}(w) = 1$.  Set $k_{z}(n) = \sup \{ l \in \{ 1, 2, \dots n \} \colon \overline{b}_{l}(z) = 1 \}$, where $z \in \{ x, y\}$. (Note that $k_{x}(m(n)) \neq k_{y}(m(n))$ as there exists a unique right special word per length.)  By definition we have $\overline{b}_{k_{z}(n)}(z)=1$,  and so, by \Cref{cor:branching_points}, there exist $l(n), l'(n) \in \mathbb{N}$, $p(n) \in \{ 1, 2, \dots, a_{2(l(n) + 1)}  \}$ and $p'(n) \in \{ 1, 2, \dots, a_{2(l(n) + 1) - 1}  \}$, with
\begin{align*}
x\lvert_{k_{x}(n)} = \mathcal{R}_{l(n)} \underbrace{\mathcal{L}_{l(n)+1} \dots \mathcal{L}_{l(n)+1}}_{p(n)} \quad \text{and} \quad y\lvert_{k_{y}(n)} = \mathcal{L}_{l'(n)} \underbrace{\mathcal{R}_{l'(n)} \dots \mathcal{R}_{l'(n)}}_{p'(n)}.
\end{align*}
An application of \Cref{rmk:BFMS-2002} and \Cref{prop:blackboard}, yields that
\begin{align}\label{eq:n-m}
\inf\{ l \in \mathbb{N} \colon \overline{b}_{n + l}(w) = 1 \} \geq \begin{cases}
\lvert \mathcal{L}_{l(n)+1}\rvert = q_{2l(n)+1} & \text{if} \; w_{n+1} = 1, 
\; \text{and} \; p(n) \neq a_{2(l+1)},\\
\lvert \mathcal{R}_{l'(n)}\rvert = q_{2l'(n)} & \text{if} \; w_{n+1} = 0,  
\; \text{and} \; p'(n) \neq a_{2(l'(n)+1) - 1},\\
\lvert \mathcal{L}_{l(n)+2}\rvert = q_{2l(n)+3} & \text{if} \; w_{n+1} = 1 \; \text{and} \; p = a_{2(l(n)+1)},\\
\lvert \mathcal{R}_{l'(n) + 1}\rvert = q_{2(l'(n)+1)} & \text{if} \; w_{n+1} = 0 \; \text{and} \; p'(n) = a_{2(l'(n)+1) - 1}.
\end{cases}
\end{align}
Thus, since $\delta_{k} = k^{-t}$, we have that
\begin{align}\label{eq:sum_key}
\sum_{k \geq n} \overline{b}_{k}(w) \delta_{k} \leq \sum_{k \geq k_{x}(n)} \overline{b}_{k}(x) \delta_{k} + \sum_{k \geq k_{y}(n)} \overline{b}_{k}(y) \delta_{k}.
\end{align}
Let $w \in X$ be fixed.   We set $m (0) \coloneqq 0$, define $m(n)\coloneqq \min\{k>m(n-1)\colon \overline{b}_{m(n)}(w) = 1\}$ and let $(w^{(n)})_{n \in \mathbb{N}}$ denote a sequence in $X$ such that $w^{(n)}\lvert_{m(n)} = w\lvert_{m(n)}$ and $w^{(n)}\lvert_{m(n)+1} \neq w\lvert_{m(n)+1}$. Combining the above with \eqref{eq:spectral_metric_bound}, \eqref{eq:unltrametric_shift_words}, \eqref{eq:unltrametric_shift_words_2} and \eqref{eq:sum_key} we conclude the following.
\begin{align*}
d_{s, \delta}\left(w, w^{(n)}\right)
&\leq 2 \left( \sum_{k \geq k_{x}(m(n))} \overline{b}_{k}(x) \delta_{k} + \sum_{k \geq k_{y}(m(n))} \overline{b}_{k}(y) \delta_{k} \right)\\
&\leq \begin{cases}
\parbox{22em}{$\displaystyle 2 d_{s, \delta}(x, \sigma^{(a_{2(l(m(n))+1)} -p(m(n)) +1) \lvert \mathcal{L}_{l(m(n)) + 1} \rvert}(y))$}
& \parbox{15em}{$\text{if} \; k_{x}(m(n)) <  k_{y}(m(n)) \; \text{and} \\ p(m(n)) \neq a_{2(l(m(n))+1)}$}\\[1em]
2 d_{s, \delta}(\sigma^{(a_{2l'(m(n))+1} -p'(m(n)) +1) \lvert \mathcal{R}_{l'(m(n))} \rvert}(x), y)
& \parbox{15em}{$\text{if} \; k_{x}(m(n)) >  k_{y}(m(n)) \; \text{and} \\ p'(m(n)) \neq a_{2(l'(m(n))+1) - 1}$}\\[1em]
2 d_{s, \delta}(x, \sigma^{\lvert \mathcal{L}_{l(m(n))+1} \rvert}(y) )
& \parbox{15em}{$\text{if} \; k_{x}(m(n)) <  k_{y}(m(n))\; \text{and} \\ p(m(n)) = a_{2(l(m(n))+1)}$}\\[1em]
2 d_{s, \delta}(\sigma^{\lvert \mathcal{R}_{l'(m(n))}\rvert}(x) , y )
& \parbox{15em}{$\text{if} \; k_{x}(m(n)) >  k_{y}(m(n)) \; \text{and} \\ p'(m(n)) = a_{2(l'(m(n))+1) - 1}$}
\end{cases}
\intertext{On the other hand by \eqref{eq:unltrametric_shift_words}, \eqref{eq:unltrametric_shift-2} and \eqref{eq:n-m}, for $r \in (0, 1)$, we have that}
d_{\delta}(w, w^{(n)})^{-r} &= \delta_{m(n)}^{-r}
= (m(n))^{rt}\\\displaybreak[1]
&\leq 
\begin{cases}
\parbox{22em}{$\displaystyle 2^{rt} \lvert \mathcal{R}_{l(m(n))} \underbrace{\mathcal{L}_{l(m(n))+1} \dots \mathcal{L}_{l(m(n))+1}}_{p(m(n))} \rvert^{rt}$}
& \parbox{15em}{$\text{if} \; k_{x}(m(n)) <  k_{y}(m(n)) \; \text{and} \\ p(m(n)) \neq a_{2(l(m(n))+1)},$}\\
2^{rt} \lvert \mathcal{L}_{l'(m(n))} \underbrace{\mathcal{R}_{l'(m(n))} \dots \mathcal{R}_{l'(m(n))}}_{p'(m(n))} \rvert^{rt} 
& \raisebox{-0.5em}{\parbox{15em}{$\text{if} \; k_{x}(m(n)) >  k_{y}(m(n)) \; \text{and} \\ p'(m(n)) \neq a_{2(l'(m(n))+1) - 1},$}}\\
\lvert \mathcal{R}_{l(m(n))+1} \mathcal{L}_{l(m(n))+2}\rvert^{rt} 
& \parbox{15em}{$\text{if} \; k_{x}(m(n)) <  k_{y}(m(n))\; \text{and} \\ p(m(n)) = a_{2(l(m(n))+1)},$}\\[1em]
\lvert \mathcal{L}_{l'(m(n))+1} \mathcal{R}_{l'(m(n))+1}\rvert^{rt}
& \parbox{15em}{$\text{if} \; k_{x}(m(n)) >  k_{y}(m(n)) \; \text{and} \\ p'(m(n)) = a_{2(l'(m(n))+1) - 1},$}
\end{cases}\\[0.5em]
&\leq
\begin{cases}
\parbox{22em}{$\displaystyle 2^{rt} d_{\delta}(x, \sigma^{(a_{2(l(m(n))+1)} -p(m(n)) +1) \lvert \mathcal{L}_{l(m(n)) + 1} \rvert}(y))^{-r}$}
& \parbox{15em}{$\text{if} \; k_{x}(m(n)) <  k_{y}(m(n)) \; \text{and} \\ p(m(n)) \neq a_{2(l(m(n))+1)},$}\\[1em]
2^{rt} d_{\delta}(\sigma^{(a_{2l'(m(n))+1} -p'(m(n)) +1) \lvert \mathcal{R}_{l'(m(n))} \rvert}(x), y)^{-r}
& \parbox{15em}{$\text{if} \; k_{x}(m(n)) >  k_{y}(m(n)) \; \text{and} \\ p'(m(n)) \neq a_{2(l'(m(n))+1) - 1},$}\\[1em]
(c + 2)^{tr} d_{\delta}(x, \sigma^{\lvert \mathcal{L}_{l(m(n)) + 1} \rvert}(y))^{-\alpha r}
& \parbox{15em}{$\text{if} \; k_{x}(m(n)) <  k_{y}(m(n))\; \text{and}\\ p(m(n)) = a_{2(l(m(n))+1)},$}\\[1em]
(c + 2)^{tr}d_{\delta}(\sigma^{\lvert \mathcal{R}_{l'(m(n))} \rvert}(x), y)^{-\alpha r}
& \parbox{15em}{$\text{if} \; k_{x}(m(n)) >  k_{y}(m(n)) \; \text{and} \\ p'(m(n)) = a_{2(l'(m(n))+1) - 1}$.}
\end{cases}
\end{align*}
 To complete the proof we observe the following.  Since $d_\delta$ induces the discrete product topology on $X$, any sequence in $X \setminus \{\ w \}$ converging to $w$ with respect to $d_\delta$ is a subsequence of a sequence of the form $(w^{(n)})_{n\in\mathbb{N}}$, and hence
\begin{align*}
\psi_{w}(r) = \limsup_{n \to \infty} \sup \left\{\frac{d_{s, \delta}(w, v)}{d_{\delta}(w, v)^{r}} \colon 
v \in X,
v\lvert_{m(n)} = w\lvert_{m(n)}  \; \text{and} \;  
v\lvert_{m(n)+1} \neq w\lvert_{m(n)+1} 
\right\}.
\end{align*}
This completes the proof.
\end{proof}

\section{Proofs}

\subsection{Proof of \Cref{prop:1-repulsive=repulsive,prop:Not_Metric,prop:liminf_psi_r}}\label{sec:1-repulsive=repulsive}

\begin{proof}[Proof of \Cref{prop:1-repulsive=repulsive}]
Let $X$ be a repulsive Sturmian subshift of slope $\theta = [0; a_{1}+1, a_{2}, \dots]$ and observe that $0 < \ell \leq \ell_{1}$.  Recall, repulsive implies that the continued fraction entries of $\theta$ are bounded.  Suppose that $a_{k} \neq 1$ infinitely often. By \Cref{prop:blackboard}, for all $k \in \mathbb{N}$, we have
\begin{align}\label{eq:WwW'w'}
W \coloneqq \underbrace{\mathcal{L}_{k} \dots \mathcal{L}_{k}}_{a_{2k}},
\quad
w \coloneqq \underbrace{\mathcal{L}_{k} \dots \mathcal{L}_{k}}_{a_{2k}-1},
\quad
W' \coloneqq \underbrace{\mathcal{R}_{k-1} \dots \mathcal{R}_{k-1}}_{a_{2k - 1}},
\quad \text{and} \quad
w' \coloneqq \underbrace{\mathcal{R}_{k-1} \dots \mathcal{R}_{k-1}}_{a_{2k - 1}-1}
\end{align}
all belong to $\mathcal{L}(X)$. Hence, letting $k_{n}$ denote the integers with $a_{k_{n}} \neq 1$, we have $A_{\alpha, a_{k_{n}}q_{k_{n}-1}} \leq (a_{k_{n}} - 1)^{-1}$, and so, $\ell_{1} = \liminf_{n \to \infty} A_{1, n} \leq \liminf_{n \to \infty} (a_{k_{n}} - 1)^{-1} \leq 1$.

Suppose we do not have that $a_{k} \neq 1$ infinitely often, namely that there exists $N \in \mathbb{N}$ such that $a_{N+j} = 1$, for all $j \in \mathbb{N}$.  In which case the continued fraction entries of $\theta$ are bounded and so the sequence $(q_{k+1}/q_{k})$ is convergent, with a non-zero and finite limit $L$.  Setting $W \coloneqq \mathcal{L}_{N+j+2} = \mathcal{L}_{N+j+1} \mathcal{R}_{N+j} \mathcal{L}_{N+j+1}$, $w \coloneqq \mathcal{L}_{N+j+1}$, $W' \coloneqq \mathcal{R}_{N+j+2} = \mathcal{R}_{N+j+1} \mathcal{L}_{N+j+1} \mathcal{R}_{N+j+1}$, $w' \coloneqq \mathcal{R}_{N+j+1}$, by \Cref{prop:blackboard}, we have that 
\begin{align*}
\ell_{1} = \liminf_{n \to \infty} A_{1, n} \leq 1 + \lim_{n \to \infty} \frac{q_{n+1}}{q_{n}} = 1 + L.
\end{align*}
For the forward implication, we show the contra-positive.  Recall that a Sturmian subshift $X$ of slope $\theta = [0; a_{1} + 1, a_{2}, \dots]$ is repulsive if and only if the continued fraction entries of $\theta$ are bounded.  Therefore, if $X$ is not repulsive the continued fraction entries of $\theta$ are unbounded.  Letting $W, w, W', w'$ be as in \eqref{eq:WwW'w'}, \Cref{prop:blackboard} implies, for all integers $k \geq 2$ with $a_{k} \neq 1$, that  $A_{\alpha, a_{k}q_{k-1}} \leq (a_{k} - 1)^{-1}$, and so, 
\begin{align*}
\ell_{1} = \liminf_{n \to \infty} A_{1, n} \leq \liminf_{n \to \infty} (a_{k_{n}} - 1)^{-1} = 0,
\end{align*}
yielding that $X$ is not $1$-repulsive.
\end{proof}

\begin{proof}[Proof of \Cref{prop:Not_Metric}]
For the first part of the result let $[0; a_{1} + 1, a_{2}, \dots ]\in [0,1/2]$ be the continued fraction expansion of $\theta$.  Since $\alpha \geq 1/(1 - t)$, $A_{\alpha}(\theta) \neq 0$ and since $(q_{k})_{k \in \mathbb{N}}$ is an unbounded monotonic sequence, there exists a sequence of natural numbers $(m_{i})_{i \in \mathbb{N}}$, so that 
\begin{align*}
0 < \min\left\{ 1, \frac{\textup{A}_{\alpha}(\theta)}{2} \right\} < a_{m_{i} + 1} q_{m_{i}}^{1 - \alpha} \leq a_{m_{i} + 1} q_{m_{i}}^{1 - 1/(1 - t)} \quad \text{and} \quad a_{m_{i}} \geq 4,
\end{align*}
and thus, for all $n \in \mathbb{N}$,
\begin{align*}
\sum_{k = n}^{\infty}  \sum_{j = 1}^{a_{k+1}}  \delta_{j q_{k} + q_{k - 1} - \mathds{1}_{2\mathbb{Z}}(k-n) q_{n-1}}
\geq \sum_{k = n}^{\infty}  \sum_{j = 1}^{a_{k + 1}}  \frac{1}{(j q_{k} + q_{k-1})^{t}}
&\geq \sum_{k = n}^{\infty}  \frac{1}{2^{t}q_{k}^{t}}  \sum_{j = 1}^{a_{k + 1}}  \frac{1}{j^{t}}\\
&\geq \frac{1 - 2^{t-1}}{2^{t}(1-t)} \sum_{i = n}^{\infty}  a_{m_{i}+1}^{1-t}  q_{m_{i}}^{-t}\\
&\geq \frac{1 - 2^{t-1}}{2^{t}(1-t)} \sum_{i = n}^{\infty}  \left( a_{m_{i}+1}  q_{m_{i}}^{1- 1/(1-t)} \right)^{1 - t}\\
&\geq \frac{1 - 2^{t-1}}{2^{t}(1-t)} \sum_{i = n}^{\infty}  \left( \min \left\{ 1, \frac{\textup{A}_{\alpha}(\theta)}{2} \right\} \right)^{1 - t} = \infty.
\end{align*}
The result follows from \eqref{eq:xyL}.

For the second part of the result, in \cite{KS:2012} it has already been shown that $d_{s, \delta}$ is a pseduo metric; and thus, it remains to show that $d_{s, \delta}(w, v) < \infty$, for all $w, v \in X$. However, this follows directly from \Cref{prop:curious,prop:curious-2,prop:lower_Holder_reg}.
\end{proof}

\begin{proof}[Proof of \Cref{prop:liminf_psi_r}]
The result follows from an application of \eqref{eq:unltrametric_shift-2}, \eqref{eq:xyL-2} and \Cref{lem:infinite_median_sum_t_1}, in tandem with the observation that 
\begin{align*}
\liminf_{n \to \infty} \phi(m, a_{m+2}, r, t) =  \liminf_{n \to \infty} (a_{m+2} q_{m+1} + q_{m})^{t(r - 1)} = \liminf_{n \to \infty} q_{m+2}^{t(r - 1)} = 0,
\end{align*}
where $\phi(m, a_{m+2}, r, t)$ is as defined in \eqref{eq:phi-1-2-3}.
\end{proof}

\subsection{Proof of \Cref{thm:thm2,thm:thm2.5}}\label{sec:proof_thm2+.5}

\begin{proof}[Proof of \Cref{thm:thm2}~\ref{thm:thm2(1)}]
Suppose that there is a $t \in (1- 1/\alpha, 1)$ so that the metric $d_{s, \delta}$ is sequentially $\overline{\varrho_{\alpha}(t)}$-H\"older regular to $d_{\delta}$, in which case $\psi_{X}(\rho_{\alpha}(t))$ is finite.  Let $t$ be fixed as such.  By this hypothesis we know that the metrics $d_{s, \delta}$ and $d_{\delta}$ are not Lipschitz equivalent and so by \Cref{rmk:alpha=1}, the continued fraction entries of $\theta$ are not bounded.  Let $a_{n_{m}}$ denote the $m$-th continued fraction entry of $\theta$, such that $a_{n_{m}+2} \geq 8$.  Since $(q_{m})_{m \in \mathbb{N}}$ is a monotonically increasing unbounded sequence, we notice
\begin{align*}
A_{\alpha}(\theta) = \limsup_{m \to \infty} a_{n_{m}} q_{n_{m}}^{1- \alpha}
\quad \text{and} \quad 
\left\lceil \frac{a_{n_{m}+2}}{2} \right\rceil q_{n_{m}+1} + q_{n_{m}} \leq \frac{2 (a_{n_{m}+2} q_{n_{m}+1} + q_{n_{m}})}{3}  = \frac{2q_{n_{m} +2}}{3}.
\end{align*}
Setting $r = \varrho_{\alpha}(t) = 1-(\alpha-1)/(\alpha t)$, we have that
\begin{align}\label{eq:A-theta_lower_bound_for_psi_r}
\begin{aligned}
\psi_{X}(r) &\geq \sup_{z\in\{x,y\}} \psi_{X,z}(r)\\
&\geq \limsup_{m\to\infty} \sup_{1 \leq j \leq a_{n_{m}+2}} \phi(n, j, r, t)\\
&\geq \limsup_{m\to\infty} \; ( \lceil a_{n_{m}+2}/2 \rceil q_{n_{m}+1} + q_{n_{m}})^{t r} 
\sum_{l = \lceil a_{n_{m}+2}/2 \rceil}^{a_{n_{m} + 2}} \frac{1}{(l q_{n_{m}+1} + q_{n_{m}})^{t}} \\
&\geq \limsup_{m \to \infty} \; ( \lceil a_{n_{m}+2}/2 \rceil q_{n_{m}+1} + q_{n_{m}})^{t r} \frac{q_{n_{m}+2}^{1-t} - ( \lceil a_{n_{m}+2}/2 \rceil q_{n_{m}+1} + q_{n_{m}})^{1 - t}}{(1-t) q_{n_{m} +1}}\\
&\geq \limsup_{m \to \infty}  \frac{1 - (2/3)^{1-t}}{2^{tr}(1-t)} \frac{q_{n_{m}+2}^{1-t(1-r)}}{q_{n_{m}+1}}\\
&= \frac{1 - (2/3)^{1-t}}{2^{tr}(1-t)} \limsup_{m \to \infty} \; \left(a_{n_{m}+2} q_{n_{m}+1}^{-t(1-r)/(1-t(1-r))}\right)^{1-t(1-r)}\\
&= \frac{1 - (2/3)^{1-t}}{2^{t - (\alpha - 1)/\alpha}(1-t)} \limsup_{m \to \infty} \; \left(a_{n_{m}+2} q_{n_{m}+1}^{1 - \alpha}\right)^{1/\alpha}\\
&= \frac{1 - (2/3)^{1-t}}{2^{t - (\alpha - 1)/\alpha}(1-t)} A_{\alpha}(\theta)^{1/\alpha}.
\end{aligned}
\end{align}
Hence, it follows that $A_{\alpha}(\theta)$ is finite.

The reverse implication is a consequence of \Cref{prop:curious,prop:curious-2,prop:lower_Holder_reg}.
\end{proof}

\begin{proof}[Proof of \Cref{thm:thm2}~\ref{thm:thm2(2)}]
For the forward implication, we prove the contra-positive; namely that, if $A_{\alpha}(\theta) = 0$, then $\psi_{X}(1-(\alpha-1)/(\alpha t)) = 0$. Using \Cref{lem:infinite_median_sum_t_1} and \Cref{prop:lower_Holder_reg} it is sufficient to show the following equalities. 
\begin{align*}
\limsup_{m \to \infty} q_{m}^{t (\alpha - (\alpha-1)/t)}  \sum_{k = m}^{\infty}  \sum_{j = 1}^{a_{k+1}} \frac{1}{(j  q_{k} + q_{k-1} - \mathds{1}_{2\mathbb{Z}}(k-m)q_{m-1})^{t}} &= 0\\
\limsup_{m \to \infty} \sup_{1 \leq j \leq a_{m+2}} (jq_{m+1} + q_{m})^{t (1 - (\alpha-1)/(\alpha t))}  \sum_{l = j}^{a_{m + 2}} \frac{1}{(l q_{m+1} + q_{m})^{t}} &= 0
\end{align*}
Using an identical argument to that presented in \eqref{eq:copy-paste-argument} yields the first equality; and using an identical argument to that presented in \eqref{eq:copy-paste-argument-2} yields the second equality.

The reverse implication, follows using an identical argument to that presented in \eqref{eq:A-theta_lower_bound_for_psi_r}.
\end{proof}

\begin{proof}[Proof of \Cref{thm:thm2.5}~\ref{thm:thm2(2)(c)}\ref{thm:thm2(2)(c)2}]
By the hypothesis we know that the metrics $d_{s, \delta}$ and $d_{\delta}$ are not Lipschitz equivalent and so by \Cref{rmk:alpha=1}, the continued fraction entries of $\theta$ are not bounded.  Let $a_{n_{m}}$ denote the $m$-th continued fraction entry of $\theta$, such that $a_{n_{m}+2} \geq 8$.  Thus since $(q_{m})_{m \in \mathbb{N}}$ is a monotonically increasing unbounded sequence, we have
\begin{align*}
A_{\alpha}(\theta) = \limsup_{m \to \infty} a_{n_{m}} q_{n_{m}}^{1- \alpha}
\quad \text{and} \quad
\left\lceil \frac{a_{n_{m}+2}}{2} \right\rceil q_{n_{m}+1} + q_{n_{m}} \leq \frac{2 (a_{n_{m}+2} q_{n_{m}+1} + q_{n_{m}})}{3}  = \frac{2q_{n_{m} +2}}{3}.
\end{align*}
Using \eqref{eq:unltrametric_shift-2}, \eqref{eq:xyL-2}, \Cref{lem:infinite_median_sum_t_1} and setting $j_{m} = \lceil a_{n_{m}+2}/2 \rceil$ and $r = \varrho_{\alpha}(t) = 1/\alpha$, we notice that
\begin{align*}
\psi_{X}(r) \geq \sup_{z\in\{x,y\}} \psi_{X,z}(r)
&\geq\limsup_{m \to \infty}  \phi(n_{m}, j_{m}, r, t)\\
&\geq \limsup_{m \to \infty}  ( \lceil a_{n_{m}+2}/2 \rceil q_{n_{m}+1} + q_{n_{m}})^{r} \frac{\ln(q_{n_{m+2}}) - \ln( \lceil a_{n_{m}+2}/2 \rceil q_{n_{m}+1} + q_{n_{m}})}{ q_{n_{m} +1}}\\
&\geq \limsup_{m \to \infty}  \frac{1}{2^{r}} \frac{q_{n_{m}+2}^{r}}{q_{n_{m}+1}} (\ln(q_{n_{m}+2}) - \ln(2 q_{n_{m}+2}/3))\\
&\geq \frac{\ln(3/2)}{2^{r}} \limsup_{m \to \infty}   a_{n_{m}+2}^{1/\alpha} q_{n_{m}+1}^{1/\alpha - 1}\\
&\geq \frac{\ln(3/2)}{2^{r}} \limsup_{m \to \infty}   (a_{n_{m}+2} q_{n_{m}+1}^{1 - \alpha})^{1/\alpha}\\
&\geq \frac{\ln(3/2)}{2^{r}} A_{\alpha}(\theta)^{1/\alpha}.
\end{align*}
This yields the required results.
\end{proof}

\begin{proof}[Proof of \Cref{thm:thm2.5}~\ref{thm:thm2(2)(c)}\ref{thm:thm2(2)(c)3}, \ref{thm:thm2(2)(c)}\ref{thm:thm2(2)(c)4}, \ref{thm:thm2(2)(a)}\ref{thm:thm2(2)(a)3} and \ref{thm:thm2(2)(a)}\ref{thm:thm2(2)(a)4}]
See \Cref{prop:curious,prop:curious-2,prop:lower_Holder_reg}.
\end{proof}

\begin{proof}[Proof of \Cref{thm:thm2.5}~\ref{thm:thm2(2)(b)}\ref{thm:thm2(2)(b)2}]
By the hypothesis we know that the metrics $d_{s, \delta}$ and $d_{\delta}$ are not Lipschitz equivalent and so by \Cref{rmk:alpha=1}, the continued fraction entries of $\theta$ are not bounded.  Let $a_{n_{m}}$ denote the $m$-th continued fraction entry of $\theta$, such that $a_{n_{m}+2} \geq 8$.  Thus since $(q_{m})_{m \in \mathbb{N}}$ is a monotonically increasing unbounded sequence, we have
\begin{align*}
A_{\alpha}(\theta) = \limsup_{m \to \infty} a_{n_{m}} q_{n_{m}}^{1- \alpha}
\quad \text{and} \quad
\left\lceil \frac{a_{n_{m}+2}}{2} \right\rceil q_{n_{m}+1} + q_{n_{m}} \leq \frac{2 (a_{n_{m}+2} q_{n_{m}+1} + q_{n_{m}})}{3}  = \frac{2q_{n_{m} +2}}{3}.
\end{align*}
Set $j_{m} = \lceil a_{n_{m}+2}/2 \rceil$ and $r = \varrho_{\alpha}(t)$, and let $\phi(m, j, r, t)$ be as in \eqref{eq:phi-1-2-3}. Using \eqref{eq:unltrametric_shift-2} and \eqref{eq:xyL-2} notice
\begin{align*}
\psi_{X}(r) &\geq
\limsup_{m \to \infty} \; \phi(n_{m}, j_{m}, r, t)\\
&\geq \limsup_{m \to \infty} \; ( \lceil a_{n_{m}+2}/2 \rceil q_{n_{m}+1} + q_{n_{m}})^{t r} \frac{( \lceil a_{n_{m}+2}/2 \rceil q_{n_{m}+1} + q_{n_{m}})^{1 - t} - q_{n_{m}+2}^{1-t}}{(t - 1) q_{n_{m} +1}}\\
&\geq \limsup_{m \to \infty}  \frac{(2/3)^{1-t} - 1}{2^{tr}(t-1)} \frac{q_{n_{m}+2}^{1-t(1-r)}}{q_{n_{m}+1}}\\
&\geq \frac{(2/3)^{1-t} - 1}{2^{tr}(t-1)} \limsup_{m \to \infty} a_{n_{m}+2}^{1-t(1-r)}q_{n_{m}+1}^{-t(1-r)}\\
&\geq \frac{(2/3)^{1-t} - 1}{2^{tr}(t-1)} \limsup_{m \to \infty} \left(a_{n_{m}+2} q_{n_{m}+1}^{-t(1-r)/(1-t(1-r))}\right)^{1-t(1-r)}\\
&\geq \frac{(2/3)^{1-t} - 1}{2^{t/\alpha}(t-1)} \limsup_{m \to \infty} \left(a_{n_{m}+2} q_{n_{m}+1}^{1-\alpha/(\alpha-t(\alpha-1))}\right)^{(\alpha-t(\alpha-1))/\alpha}\\
&\geq \frac{(2/3)^{1-t} - 1}{2^{t/\alpha}(t-1)} A_{(\alpha-t(\alpha-1))}(\theta)^{(\alpha-t(\alpha-1))/\alpha},
\end{align*}
where the last inequality holds since $t \in (1, \alpha/(\alpha-1))$ and hence $(\alpha-t(\alpha-1))/\alpha > 0$.  Thus, $A_{\alpha/(\alpha-t(\alpha-1))}(\theta) < \infty$.
\end{proof}

\begin{proof}[Proof of \Cref{thm:thm2.5}~\ref{thm:thm2(2)(b)}\ref{thm:thm2(2)(b)3}]
This is a consequence of \eqref{eq:spectral_metric_bound} and the fact that, for all $m \in \mathbb{N}$,
\begin{align*}
\sum_{n = m+1}^{\infty} \frac{1}{n^{t}}
\leq \frac{1}{t-1}  m^{-(t-1)}
=  \frac{1}{t-1}  (m^{-t})^{1-1/t}
\leq  \frac{1}{t-1}  (m^{-t})^{1/\alpha}.
\end{align*}
This completes the proof.
\end{proof}

\subsection{Proof of \Cref{thm:thm1}}\label{sec:Proof_Thm_1}

We divide the proof of \Cref{thm:thm1} into five parts: namely, we show the following implications:
\ref{thm:thm1(1)} $\Rightarrow$ \ref{thm:thm1(2)} $\Rightarrow$ \ref{thm:thm1(3)},
\ref{thm:thm1(3)} $\Rightarrow$ \ref{thm:thm1(1)},
\ref{thm:thm1(3)} $\Rightarrow$ \ref{thm:thm1(5)}, and
\ref{thm:thm1(5)} $\Rightarrow$ \ref{thm:thm1(2)}.

\begin{proof}[Proof of \Cref{thm:thm1}]
\ref{thm:thm1(1)} $\Rightarrow$ \ref{thm:thm1(2)}:
Assume that the statement is false, in which case either $\ell_{\alpha} = 0$ or $\ell_{\alpha} = \infty$.  First we consider the case $\ell_{\alpha} = 0$.  By definition of $\ell_{\alpha}$, there exist words $W, w \in \mathcal{L}(X)$ such that $w$ is a prefix and suffix of $W$, $W \neq w \neq \emptyset$ and
\begin{align}\label{eq:W-w-alpha}
1 \leq \lvert W \rvert - \lvert w \rvert \leq \left\lfloor \frac{\lvert w \rvert^{1/\alpha}}{2^{1/\alpha} R_{\alpha}^{1/\alpha}} \right\rfloor
\quad \text{and} \quad
R(n) \leq 2 R_{\alpha} n^{\alpha},
\end{align}
for all $n \geq \lvert w \rvert$.  Further, for all $i \in \{ 1, 2, \dots, \lvert w \rvert \}$, we have that 
\begin{align}\label{eq:W-w-i}
w_{i} = W_{i} = W_{i + \lvert W \rvert - \lvert w \rvert},
\end{align}
where we recall that $w_{k}$ and $W_{k}$ respectively denote the $k$-th letter of $w$ and $W$.  By the property of $\alpha$-repetitive, for all words $u \in \mathcal{L}(X)$ with
\begin{align*}
\lvert u \rvert = \left\lfloor \frac{\lvert w \rvert^{1/\alpha}}{2^{1/\alpha}R_{\alpha}^{1/\alpha}} \right\rfloor,
\end{align*}
we have that $u$ is a factor of $w$.  In particular, letting $\xi \in X$ and $k \in \mathbb{N}$, the factor 
\begin{align*}
\left(\xi_{k}, \xi_{k+1}, \dots, \xi_{k+ \lfloor \lvert w \rvert^{1/\alpha} 2^{-1/\alpha} R_{\alpha}^{-1/\alpha} \rfloor}\right),
\end{align*}
of $\xi$ is a factor of $w$.  This together with \eqref{eq:W-w-alpha} and \eqref{eq:W-w-i} yields that $\xi_{k} = \xi_{k + \lvert W \rvert - \lvert w \rvert}$ for all $k \in \mathbb{N}$, and thus, $\xi$ is periodic.  This contradicts the aperiodicity and minimality of $X$.  Therefore, if $X$ is $\alpha$-repetitive and not $\alpha$-repulsive, then $\ell_{\alpha} = \infty$.  For easy of notation set $B_{k} = \inf \{ A_{\alpha, n} \colon n \geq a_{k}  q_{k-1} \}$.  By \Cref{prop:blackboard}, for all $k \in \mathbb{N}$ we have that
\begin{align}\label{eq:Wwdefn}
W \coloneqq \underbrace{\mathcal{L}_{k} \dots \mathcal{L}_{k}}_{a_{2k}},
\quad
w \coloneqq \underbrace{\mathcal{L}_{k} \dots \mathcal{L}_{k}}_{a_{2k}-1},
\quad
W' \coloneqq \underbrace{\mathcal{R}_{k-1} \dots \mathcal{R}_{k-1}}_{a_{2k - 1}},
\quad \text{and} \quad
w' \coloneqq \underbrace{\mathcal{R}_{k-1} \dots \mathcal{R}_{k-1}}_{a_{2k - 1}-1}
\end{align}
all belong to the language $\mathcal{L}(X)$, that
\begin{align*}
\frac{\lvert W \rvert - \lvert w \rvert}{\lvert w \rvert^{1/\alpha}} = \frac{\lvert \mathcal{L}_{k} \rvert^{1-1/\alpha}}{(a_{2k} - 1)^{1/\alpha}} = \frac{q_{2k-1}^{1-1/\alpha}}{(a_{2k} - 1)^{1/\alpha}},
\end{align*}
provided that $a_{2k} \neq 1$, and that
\begin{align*}
\frac{\lvert W' \rvert - \lvert w' \rvert}{\lvert w' \rvert^{1/\alpha}} = \frac{\lvert \mathcal{R}_{k-1} \rvert^{1-1/\alpha}}{a_{2k-1} - 1} = \frac{q_{2(k-1)}^{1-1/\alpha}}{(a_{2k-1} - 1)^{1/\alpha}},
\end{align*}
provided that $a_{2k-1} \neq 1$. Hence, for $k \in \mathbb{N}$ with $a_{k} \neq 1$,
\begin{align}\label{eq:upper_bound_B_k}
B_{k} \leq q_{k-1}^{1-1/\alpha}(a_{k} - 1)^{-1/\alpha}.
\end{align}
Thus, since by assumption $\ell_{\alpha} = \infty$, since $B_{k} \leq B_{k +1}$, for all $k \in \mathbb{N}$, and since $(q_{k})_{k \in \mathbb{N}}$ is an unbounded monotonic sequence, given $N \in \mathbb{N}$ there exists $M \in \mathbb{N}$ so that $a_{j}  q_{j-1}^{1-\alpha} < N^{-\alpha}$, for all $j \geq M$.  For all $n \in \mathbb{N}$ let $m_{(n)}$ be the largest natural number so that $q_{m_{(n)}} \leq n$. By \Cref{thm:MH-1940}, for all $n \in \mathbb{N}$, so that $m_{(n)} \geq M$,
\begin{align*}
\frac{R(n)}{n^{\alpha}}
&\leq \frac{q_{m_{(n)}+1} + 2 q_{m_{(n)}} - 1 + q_{m_{(n)}+1} - q_{m_{(n)}}}{n^{\alpha}}\\
&\leq \frac{2 a_{m_{(n)}+1} q_{m_{(n)}} + 2q_{m_{(n)}-1} + q_{m_{(n)}}}{q_{m_{(n)}}^{\alpha}}
\leq \frac{2}{N^{\alpha}} + \frac{2q_{m_{(n)}-1}}{q_{m_{(n)}}^{\alpha}} + \frac{q_{m_{(n)}}}{q_{m_{(n)}}^{\alpha}}.
\end{align*}
Hence, we have that $R_{\alpha} \leq N^{-\alpha}$.  However, $N$ was chosen arbitrary and so $R_{\alpha} = 0$, this contradicts the initial assumption that $X$ is $\alpha$-repetitive.

\ref{thm:thm1(2)} $\Rightarrow$ \ref{thm:thm1(3)}:
Let $[0; a_{1} + 1, a_{2}, \dots]$ denote the continued fraction expansion of $\theta$.  Since the Sturmian subshift $X$ is $\alpha$-repulsive and $\alpha > 1$, by \Cref{prop:1-repulsive=repulsive} and \Cref{rmk:alpha-beta-repulsive,rmk:alpha=1}, we have that the continued fraction entries of $\theta$ are unbounded.  In particular, infinitely often we have that $a_{n} \neq 1$.  Setting $B_{k} = \inf \{ A_{\alpha, n} \colon n \geq a_{k}  q_{k-1} \}$, as in \eqref{eq:upper_bound_B_k}, we have that $B_{k} \leq q_{k-1}^{1-1/\alpha}(a_{k} - 1)^{-1/\alpha}$, for all $k \in \mathbb{N}$ with $a_{k} \neq 1$.  Since $B_{k} \leq B_{k+1}$, there exists $N \in \mathbb{N}$ so that, $2^{\alpha}/\ell_{\alpha}^{\alpha} \geq (a_{n} - 1)  q_{n-1}^{1-\alpha}$, for all $n \geq N$ with $a_{n} \neq 1$.  Hence, since the sequence $(q_{n})_{n \in \mathbb{N}}$ is an unbounded monotonic sequence and since, $X$ is $\alpha$-repulsive,
\begin{align*}
A_{\alpha}(\theta) = \limsup_{n \to \infty} a_{n} q_{n-1}^{1-\alpha} \leq \frac{2^{\alpha}}{\ell_{\alpha}^{\alpha}} < \infty.
\end{align*}
It remains is to show that $A_{\alpha}(\theta) > 0$.  We have observed that if the Sturmian subshift $X$ is $\alpha$-repulsive, then the continued fraction entries of $\theta$ are unbounded.  In particular, infinitely often we have that $a_{n} \neq 1$.  Thus, letting $W, w, W', w'$ be as in \eqref{eq:Wwdefn}, if $A_{\alpha}(\theta) = 0$, then $B_{k} = 0$, for all $k \in \mathbb{N}$, and hence that $\ell_{\alpha} = 0$.  This contradicts the assumption that $X$ is $\alpha$-repulsive.  Hence, if the Sturmian subshift $X$ is $\alpha$-repulsive, then $A_{\alpha}(\theta) > 0$.

\ref{thm:thm1(3)} $\Rightarrow$ \ref{thm:thm1(1)}:
Let $m_{(n)}$ denotes the largest integer so that $q_{m_{(n)}} < n$.  Since $A_{\alpha}(\theta) < \infty$, there exists a constant $c > 1$ so that $a_{m+1} \leq c q_{m}^{\alpha - 1}$, for all $m \in \mathbb{N}$.  By \Cref{thm:MH-1940} and the recursive definition of the sequence $(q_{n})_{n \in \mathbb{N}}$, we have that, for all $n \in \mathbb{N}$,
\begin{align*}
R(n)
&\leq R(q_{m_{(n)}}) + a_{m_{(n)}+1}q_{m_{(n)}}\\
&= 2 a_{m_{(n)}+1}q_{m_{(n)}} + q_{m_{(n)}-1} +2q_{m_{(n)}} - 1\\
&\leq 2c q_{m_{(n)}}^{\alpha} + q_{m_{(n)}-1} + 2 q_{m_{(n)}}\\
&\leq (2c + 3)n^{\alpha}\!\!.
\end{align*}
In particular, we have that, if $\theta$ is well-approximable of $\alpha$-type, then $R_{\alpha}$ is finite.  Further,  by \Cref{thm:MH-1940}, the recursive definition of the sequence $(q_{n})_{n \in \mathbb{N}}$ and the assumption that $A_{\alpha}(\theta) > 0$, we have that
\begin{align*}
R_{\alpha}
\geq \limsup_{k \in \mathbb{N}} \frac{R(q_{k})}{q_{k}^{\alpha}}
= \limsup_{k \in \mathbb{N}} \frac{q_{k+1} + 2q_{k} - 1}{q_{k}^{\alpha}}
\geq \limsup_{k \in \mathbb{N}} \frac{a_{k+1}q_{k}}{q_{k}^{\alpha}}
= A_{\alpha}(\theta) > 0.
\end{align*}
That is, if $\theta$ is well-approximable of $\alpha$-type, then $0 < R_{\alpha}$.

\ref{thm:thm1(3)} $\Rightarrow$ \ref{thm:thm1(5)}:
By \Cref{prop:blackboard} and the definition of $Q(n)$, we have $Q(q_{n}) \geq a_{n+1}$ and so
\begin{align*}
Q_{\alpha}
= \limsup_{n \to \infty} \frac{Q(n)}{n^{\alpha-1}}
\geq \limsup_{n \to \infty} \frac{Q(q_{n})}{q_{n}^{\alpha-1}}
\geq \limsup_{n \to \infty} \frac{a_{n+1}}{q_{n}^{\alpha-1}}
= A_{\alpha}(\theta) > 0.
\end{align*}
Thus, if $\theta$ is well-approximable of $\alpha$-type and $X$ was not $\alpha$-finite, then $Q_{\alpha}$ would be infinite.  By way of contradiction assume that $\theta$ is well-approximable of $\alpha$-type and $X$ and that $Q_{\alpha} = \infty$.  This means there exists a sequence of tuples $((n_{k}, p_{k}))_{k \in \mathbb{N}}$ of natural numbers such that
\begin{itemize}
\item the sequences $(n_{k})_{k \in \mathbb{N}}$ and $(p_{k})_{k \in \mathbb{N}}$ are strictly increasing and $\displaystyle{\lim_{n \to \infty} p_{k} n_{k}^{1 - \alpha} = \infty}$, and 
\item for each $k \in \mathbb{N}$ there exists a word $W_{(k)} \in \mathcal{L}(X)$ with $\lvert W_{(k)}\rvert = n_{k}$ and $\displaystyle{\underbrace{W_{(k)} W_{(k)} \cdots W_{(k)}}_{p_{k}} \in \mathcal{L}(X)}$.
\end{itemize}
For a fixed $k \in \mathbb{N}$, setting $\displaystyle W = \underbrace{W_{(k)} W_{(k)} \cdots W_{(k)}}_{p_{k}}$ and $\displaystyle w = \underbrace{W_{(k)} W_{(k)} \cdots W_{(k)}}_{p_{k}-1}$, we have that
\begin{align*}
\frac{\lvert W \rvert - \lvert w \rvert}{\lvert w \rvert^{1/\alpha}}
= \frac{n_{k}^{1 - 1/\alpha}}{(p_{k} - 1)^{1/\alpha}}
= \left(\frac{p_{k}}{p_{k} - 1} \frac{n_{k}^{\alpha-1}}{p_{k}} \right)^{1/\alpha}
= \left(\frac{p_{k}}{p_{k} - 1} \left(p_{k} n_{k}^{1- \alpha}\right)^{-1} \right)^{1/\alpha}\hspace{-1.5em}.
\end{align*}
This latter value converges to zero as $k$ increases to infinity.  Therefore, $\ell_{\alpha} = 0$ and so $X$ is not $\alpha$-repulsive.  This is a contradiction as have already seen that $\theta$ is well-approximable of $\alpha$-type if and only if $X$ is $\alpha$-repulsive.

\ref{thm:thm1(5)} $\Rightarrow$ \ref{thm:thm1(2)}:
Suppose that $Q_{\alpha}$ is non-zero and finite.  This means there is a sequence of tuples $((n_{k}, p_{k}))_{k \in \mathbb{N}}$ so that the sequence $(n_{k})_{k \in \mathbb{N}}$ is strictly monotonically increasing with $\displaystyle 0 < \lim_{k \to \infty} p_{k} n_{k}^{1 - \alpha} = Q_{\alpha} < \infty$, and for each $k \in \mathbb{N}$ there exists a word $W_{(k)} \in \mathcal{L}(X)$ with $\lvert W_{(k)}\rvert = n_{k}$ and 
\begin{align*}
\underbrace{W_{(k)} W_{(k)} \cdots W_{(k)}}_{p_{k}} \in \mathcal{L}(X).
\end{align*}
For a fixed $k \in \mathbb{N}$, setting $\displaystyle W = \underbrace{W_{(k)} W_{(k)} \cdots W_{(k)}}_{p_{k}}$ and $\displaystyle w = \underbrace{W_{(k)} W_{(k)} \cdots W_{(k)}}_{p_{k}-1}$, we have that
\begin{align*}
\frac{\lvert W \rvert - \lvert w \rvert}{\lvert w \rvert^{1/\alpha}}
= \frac{n_{k}^{1 - 1/\alpha}}{(p_{k} - 1)^{1/\alpha}}
= \left(\frac{p_{k}}{p_{k} - 1} \frac{n_{k}^{\alpha-1}}{p_{k}} \right)^{1/\alpha}\hspace{-1.5em}.
\end{align*}
This latter value converges to $Q_{\alpha}^{-1/\alpha}$, and so, we have that $\ell_{\alpha}$ is finite.

By way of contradiction, suppose $\ell_{\alpha} = 0$.  This implies there is a strictly increasing sequence of integers $(n_{m})_{m \in \mathbb{N}}$, so that there exist $W_{(n_{m})}, w_{(n_{m})} \in \mathcal{L}(X)$ with $W_{(n_{m})} \neq w_{(n_{m})}$, $\lvert W_{(n_{m})} \rvert = n_{m}$, $w_{(n_{m})}$ is a prefix and suffix of $W_{(n_{m})}$ and
\begin{align*}
\frac{\lvert W_{(n_{m})} \rvert - \lvert w_{(n_{m})} \rvert}{\lvert w_{(n_{m})} \rvert^{1/\alpha}} < \frac{1}{m}.
\end{align*} 
This means the two occurrences of $w_{(n_{m})}$ in $W_{(n_{m})}$ overlap.  Thus, there exist $p = p_{n_{m}} \in \mathbb{N}$ so that 
\begin{align*}
w = \underbrace{u \, u \, \cdots \, u}_{p-1} v \quad \text{and} \quad W = \underbrace{u \, u \, \cdots \, u}_{p} v,
\end{align*}
where $u = u_{(n_{m})}, v = v_{(n_{m})} \in \mathcal{L}(X)$ with $0 < \lvert v \rvert < \lvert u \rvert$.  Combing the above gives that $p  \lvert u \rvert^{1 - \alpha} > m^{\alpha}$, and so, $Q_{\alpha} = \infty$, contradicting the assumption that $Q_{\alpha}$ is finite.  
\end{proof}

\subsection{Proof of \Cref{thm:thm3}}\label{sec:proof_thm3}

\begin{proof}[Proof of \Cref{thm:thm3}]
For $\theta = [0; a_{1}, a_{2}, \dots ]$, it is known that,
\begin{align}\label{eq:Jarnik_bounds}
\frac{1}{(a_{n+1}+2) q_{n}^{2}}
\leq
\frac{1}{q_{n}(q_{n}+q_{n+1})}=
\left\lvert \frac{p_{n}+p_{n+1}}{q_{n}+q_{n+1}} - \frac{p_{n}  }{q_{n}  } \right\rvert
\leq
\left\lvert \theta - \frac{p_{n}}{q_{n}} \right\rvert
\leq \left\lvert \frac{p_{n}}{q_{n}} - \frac{p_{n+1}}{q_{n+1}} \right\rvert 
\leq \frac{1}{a_{n+1} q_{n}^{2}},
\end{align}
see for instance \cite{K:1997}.  Also, considering sequences of approximations $( p_{n}(x)/q_{n}(x) )_{n \in \mathbb{N}}$, of an irrational number $x = [0; a_{1} , a_{2}, \dots ] \in [0, 1] $, we have that
\begin{align*}
\mathcal{J}_{\alpha+1}^{1/c} \supseteq \left\{ x = [0; a_{1} , a_{2}, \dots ] \in [0, 1] \colon \left\lvert x - \frac{p_{n}(x)}{q_{n}(x)} \right\rvert \leq c^{-1}q_{n}(x)^{-\alpha - 1} \, \text{ for infinitely many } n\in\mathbb{N}  \right\},
\end{align*}
for further details see \cite{K:1997}.  Thus, by the lower bound in \eqref{eq:Jarnik_bounds}, if $\displaystyle \limsup_{n \to \infty} a_{n+1} q_{n}^{1-\alpha} \geq c$, for some given $c > 0$, then $\theta \in \mathcal{J}_{\alpha + 1}^{1/c}$.  Therefore,
\begin{align*}
\Theta_{\alpha} \subseteq \underline{\Theta}_{\alpha} \subseteq \{ \theta \in [0, 1] \colon A_{\alpha}(\theta) > 0 \} \subseteq \bigcup_{n \in \mathbb{N}} \mathcal{J}_{\alpha + 1}^{n},
\end{align*}
and so, by monotonicity and countable stability of the Hausdorff dimension (see for instance \cite{F:1990}) and \Cref{rm.:Jarnik_dimension}, we have that
\begin{align}\label{eq:Hausdorff_Dim_Lower_Final}
\textup{dim}_{\mathcal{H}}(\Theta_{\alpha}) \leq \textup{dim}_{\mathcal{H}}(\underline{\Theta}_{\alpha}) \leq 2/(\alpha + 1).
\end{align}
To prove that $2/(\alpha + 1)$ is a lower bound for $\textup{dim}_{\mathcal{H}}(\underline{\Theta}_{\alpha})$ and $\textup{dim}_{\mathcal{H}}(\Theta_{\alpha})$ we first show $\textup{Exact}(\alpha+1)$ is a subset of $\underline\Theta(\alpha)$ and $\overline\Theta(\alpha)$ and hence a subset of $\Theta(\alpha)$.  By \cite[Theorem 15]{K:1997} every best (reduced) rational approximation (of the first kind) $p/q$ to $\theta=[0;a_{1},a_{2},\ldots ]$, namely $|\theta-p'/q'|>|\theta-p/q|$, for all $p', q' \in \mathbb{N}$ with $q' < q$, is necessarily of the form $p^{(m)}/q^{(m)} = [0;a_{1},a_{2},\ldots, a_{n-1},m]$, for some $n\in \mathbb{N}$ and $1\leq m\leq a_{n}$.  In fact, $a_{n}/2 \leq m\leq a_{n}$, since if $m < a_{n}/2$, then by \eqref{eq:Jarnik_bounds},
\begin{align*}
\left\vert \theta - \frac{p^{(m)}}{q^{(m)}}\right\vert -  \left\vert \theta - \frac{p_{n-1}}{q_{n-1}}\right\vert
\geq \left\vert \frac{p_{n-1}}{q_{n-1}} - \frac{p^{(m)}}{q^{(m)}}\right\vert - 2 \left\vert \theta - \frac{p_{n-1}}{q_{n-1}}\right\vert
\geq \left\vert   \frac{p_{n-1}}{q_{n-1}} - \frac{p^{(m)}}{q^{(m)}}\right\vert - \frac{2}{q_{n}q_{n-1}}&\\[0.25em]
=  \frac{1}{q^{(m)}q_{n-1}} - \frac{2}{q_{n}q_{n-1}}
\geq \frac{(a_{n}-2m)q_{n-1}-q_{n-2}}{q^{(m)}q_{n}q_{n-1}} &> 0.
\end{align*}
Hence, $p^{(m)}/q^{(m)}$ is not a best approximation (of the first kind). From this, we conclude $1/2\leq q^{(m)}/q_{n}\leq 1$, for $a_{n}/2 \leq m\leq a_{n}$. Hence, for ever  reduced fraction $p/q$ with  $\vert \theta -p/q\vert\leq q^{-1-\alpha}$ we may assume without loss of generality that $p/q$ is a best approximation (of the first kind) and hence we find $n\in \mathbb{N}$ such that 
\begin{align*}
\left\vert \theta -\frac{p_{n}}{q_{n}}\right\vert \leq \left\vert \theta -\frac{p}{q}\right\vert\leq q^{-(\alpha + 1)} \leq 2^{\alpha + 1}q_{n}^{-(\alpha + 1)}.
\end{align*}
Using  the lower bound in \eqref{eq:Jarnik_bounds} gives, for every $\theta\in \textup{Exact}(\alpha+1)$, that $ \limsup  a_{n+1} q_{n}^{1-\alpha} \geq 2^{-(\alpha+1)}$ and thus that $\textup{Exact}(\alpha+1)\subset\underline\Theta(\alpha)$.  Further, assume that $ \left\vert \theta -{p}/{q}\right\vert > d q^{-(\alpha + 1)}$ for some $d<1$ and all but finitely many rationals $p/q$. This together with the upper bound in \eqref{eq:Jarnik_bounds} yields that $\limsup  a_{n+1} q_{n}^{1-\alpha} \leq d^{-1}$. In this way we have verified that $\textup{Exact}(\alpha+1)\subset\overline\Theta(\alpha)$.  The statement on the Hausdorff dimension of $\underline{\Theta}_{\alpha}$ and $\Theta_{\alpha}$ now follows from an application of \Cref{rm.:Jarnik_dimension}, the monotonicity of the Hausdorff dimension (see for instance \cite{F:1990}) and \eqref{eq:Hausdorff_Dim_Lower_Final}.

To complete the proof, we show that $\Lambda(\overline{\Theta}_{\alpha}) = 1$.  Notice, if $\theta \in [0, 1] \setminus \mathcal{J}_{\alpha + 1}^{1}$, using the upper bound given in \eqref{eq:Jarnik_bounds}, we have that $a_{n+1} q_{n}^{1 - \alpha} < 1$, for all but finitely many $n \in \mathbb{N}$, and thus, $A_{\alpha}(\theta) < 1$.  In particular, we have $\overline{\Theta}_{\alpha} \supseteq [0, 1] \setminus \mathcal{J}_{\alpha + 1}^{1}$.  This with \Cref{rm.:Jarnik_dimension} yields $\Lambda(\overline{\Theta}_{\alpha}) \geq \Lambda([0, 1] \setminus \mathcal{J}_{\alpha + 1}^{1}) = 1$.
\end{proof}

\section*{Acknowledgments}

This work was partly funded by the \emph{M8 Post-Doc-Initiative PLUS} program of the Universit\"at Bremen and the DFG Scientific Network \emph{Skew Product Dynamics and Multifractal Analysis} (OE 538/3-1).  The first author also acknowledges support from the DFG Emmy-Noether grant Ja 1721/2-1. The authors would like to thank Anna Zielicz for many motivating discussions concerning Sturmian subshifts and augmented trees.  We would also like to thank Johannes Kellendonk and Daniel Lenz for bringing our attention to the problems addressed in this article.

Revision of this articles were completed while M.\,Kesseb\"ohmer, A.\ Mosbach and T.\,Samuel were visiting the Mittag-Leffler institut as part of the program \textsl{Fractal Geometry and Dynamics}. We are extremely grateful to the organisers and staff for their very kind hospitality, financial support and a stimulating atmosphere.


\begin{thebibliography}{99}
%
\bibitem{A:2005}
B.\ Adamczewski.
On powers of words occurring in binary codings of rotations.
Adv. in Appl. Math. (1) 34 (2005), 1--29.
%
\bibitem{B:2014}
V.\ Berth\'e, V.\ Delecroix
Beyond substitutive dynamical systems: $S$-adic expansions.
RIMS Lecture note `Kokyuroku Bessatu' B46 (2014) 81--123.
%
\bibitem{B:2016}
V. Berth\'e,
$S$-adic expansions related to continued fractions.
RIMS Kyokuroku Bessatsu (2016).
%
\bibitem{STCS}
E.\ Christensen, C.\ Ivan.
Spectral triples for {AF $C^*$-algebras} and metrics on the {C}antor set.
J. Operator Theory (1) 56 (2006) 17--46.
%
\bibitem{expfast-finte-spec1}
E.\ Christensen, C.\ Ivan.
Sums of two dimensional spectral triples.
Math. Scand. (1) 100 (2007) 35--60.
%
\bibitem{DOST}
E.\ Christensen, C.\ Ivan, M.\ L.\ Lapidus.
Dirac operators and spectral triples for some fractal sets built on curves.
Adv. Math. (1) 217 (2008) 42--78.
%
\bibitem{AR:1991}
P.\ Arnoux, G.\ Rauzy.
Repr\'esentation g\'eom\'etrique de suites de complexit\'e $2n+1$.
Bull.\ Soc.\ Math.\ France (2) 119 (1991) 199--215. 
%
\bibitem{BG:2013}
M.\ Baake, U.\ Grimm.
Aperiodic order: A mathematical invitation. Vol.\ 1.
Encyclopedia of Mathematics and its Applications, 149, Cambridge Univ. Press, Cambridge, 2013.
%
\bibitem{BaMo:2000}
M.\ Baake, R.\ V.\ Moody (eds).
Directions in mathematical quasicrystals.
CRM Monogr.\ Ser., 13, Amer.\ Math.\ Soc., Providence, RI, 2000.
%
\bibitem{Bellisard}
J.\ V.\ Bellissard, J.\ Pearson.
Noncommutative Riemannian geometry and diffusion on ultrametric cantor sets.
J.\ Noncommut.\ Geom.\ (3) 3 (2009), 447--480.
%
\bibitem{BMR:2010}
J.\ V.\ Bellissard, M.\ Marcolli, K.\ Reihani.
{Dynamical Systems on Spectral Metric Spaces}.
Preprint, arXiv:1008.4617 (2010).
%
\bibitem{B:2012}
V.\ Beresnevich.
Rational points near manifolds and metric Diophantine approximation.
Ann.\ of Math. (1) 175 (2012), 187--235.
%
\bibitem{BBDV:2009}
V.\ Beresnevich, V.\ Bernik, M.\ Dodson and S.\ Velani.
Classical metric Diophantine approximation revisited.
W. W. L. Chen et al. (eds), Analytic Number Theory. Essays in Honour of Klaus Roth, 38--61, Cambridge Univ. Press, Cambridge, 2009.
%
\bibitem{BFMS:2002}
N.\ P.\ Fogg, V.\ Berth\'e, S.\ Ferenczi,\ C. Mauduit, A.\ Siegel (eds).
Substitutions in dynamics, arithmetics and combinatorics.
Lecture Notes in Mathematics, 1794. Springer-Verlag, Berlin, 2002. 
%
\bibitem{BGT:1987}
N.\ H.\ Bingham, C.\ M.\ Goldie, J.\ L.\ Teugels.
Regular variation.
Encyclopedia of Mathematics and its Applications, 27, Cambridge Univ. Press, Cambridge, 1989.
%
\bibitem{B:2003}
Y.~Bugeaud.
Sets of exact approximation order by rational numbers.
Ann.\ of Math. 327 (2003), 171--190.
%
\bibitem{B:2008}
Y.~Bugeaud.
Sets of exact approximation order by rational numbers II.
Unif.\ Distrib.\ Theory (2)  3 (2008), 9--20. 
%
\bibitem{NCDGCones}
A.\ Connes.
Noncommutative differential geometry.
Inst.\ Hautes {\'E}tudes Sci.\ Publ.\ Math.\ 62 (1985), 257--360.
 %
\bibitem{C:1989} 
A.\ Connes.
Compact metric spaces, Fredholm modules, and hyperfiniteness.
Ergodic Theory Dynam.\ Systems (2) 9 (1989), 207--220. 
%
\bibitem{C:1994}
A.\ Connes.
Noncommutative Geometry.
Academic Press, San Diego, CA, 1994.
%
\bibitem{DK:2002}
K.\ Dajani, C.\ Kraaikamp.
Ergodic theory of numbers.
Carus Mathematical Monographs, 29, Mathematical Association of America, Washington, DC, 2002.
%
\bibitem{DL:2002}
D.\ Damanik, D.\ Lenz.
The Index of Sturmian Sequences.
European J. Combin. (1) 23 (2002), 23--29.
%
\bibitem{LNCS4649}
V.\ Diekert, M.\ V.\ Volkov, A.\ Voronkov (ed).
Computer Science -- Theory and Applications.
Second International Symposium on Computer Science in Russia, CSR 2007.
Ekaterinburg, Russia, Sept. 2007, Proceedings.
%
\bibitem{DKMSS:2017}
F.\ Dreher, M.\ Kesseb\"ohmer, A.\ Mosbach, T.\ Samuel, M.\ Steffens.
Regularity of aperiodic minimal subshifts.
Bull.\ Math.\ Sci.\ (2017) 1--22.
%
\bibitem{D:2000}
F.\ Durand.
Linearly recurrent subshifts have a finite number of non-periodic subshift factors.
Ergodic Theory Dynam.\ Systems (4) 20 (2000), 1061--1078.
%
\bibitem{F:1990}
K.\ J.\ Falconer.
Mathematical foundations and applications. Third edition.
John Wiley \& Sons, Chichester, 2014.
%
\bibitem{Self-reference-1}
K.\ Falconer, T.\ Samuel.
Dixmier traces and coarse multifractal analysis.
Ergodic Theory Dynam.\ Systems (2) 31 (2011), 369--381. 
%
\bibitem{FGJ:2015}
G.\ Fuhrmann, M.\ Gr\"oger, T. J\"ager.
Amorphic complexity.
To appear in Nonlinearity.
%
\bibitem{Gelfand+Neumark}
I.\ M.\ Gelfand, M.\ A.\ Na\u{\i}mark.
On the imbedding of normed rings into the ring of operators in Hilbert space.
Rec. Math. [Mat. Sbornik] N.S. (54) 12 (1943), 197--213.
%
\bibitem{GLN:16}
R.\ Grigorchuk, D.\ Lenz, T.\ Nagnibeda.
Schreier graphs of Grigorchuk's group and a substitution associated to a non-primitive subshift.
To appear in:\ Groups, Graphs and Random Walks, T.~Ceccherini-Silberstein, M.~Salvatori, E.~Sava-Huss, (Eds), London Math.\ Soc.\ Lecture Note Series, Cambridge University Press (2017).
%
\bibitem{GLN:16b}
R.\ Grigorchuk, D.\ Lenz, T.\ Nagnibeda.
Spectra of Schreier graphs of Grigorchuk's group and Schroedinger operators with aperiodic order.
To appear in:\ Math.\ Ann.
%
\bibitem{GI1}
D.\ Guido, T.\ Isola.
Dimensions and singular traces for spectral triples, with applications to fractals.
J. Funct. Anal. (2) 203 (2003), 362--400.
%
\bibitem{GI2}
D.\ Guido, T.\ Isola.
Dimensions and spectral triples for fractals in $\mathbb{R}^{N}$.
Advances in Operator Algebras and Mathematical Physics, 89--108, Theta Ser.\ Adv.\ Math., 5, Theta, Bucharest, 2005.
%
\bibitem{HKW:2015}
A. Haynes, H. Koivusalo, J. Walton.
A characterization of linearly repetitive cut and project sets.
Preprint, arXiv:1503.04091 (2015).
%
\bibitem{INF:1985}
T.\ Ishimasa, H.\ U.\ Nissen, Y.\ Fukano.
New ordered state between crystalline and amorphous in Ni-Cr particles.
Phys.\ Rev.\ Lett.\ (5) 55 (1985), 511--513.
%
\bibitem{JP:2015}
A.\ Julien, I.\ F.\ Putnam.
Spectral triples for subshifts.
J.\ Funct.\ Anal.\ (3) 270 (2016),1031--1063.
%
\bibitem{K:2001}
V.\ A.\ Kaimanovich.
Random walks on Sierpi\'nski graphs: hyperbolicity and stochastic homogenization.
Fractals in Graz 2001, 145--183, Trends Math., Birh\"auser, Basel, 2003.
%
\bibitem{KLS:2011}
J.\ Kellendonk, D.\ Lenz, J.\ Savinien.
A characterization of subshifts with bounded powers
Discrete Math.\ (24) 313 (2013), 2881--2894.
%
\bibitem{KS:2012}
J.\ Kellendonk, J.\ Savinien.
Spectral triples and characterization of aperiodic order.
Proc.\ Lond.\ Math.\ Soc. (1) 104 (2012), 123--157.
%
\bibitem{KessSam:2013}
M.\ Kesseb\"ohmer, T.\ Samuel.
Spectral metric spaces for Gibbs measures.
J.\ Funct.\ Anal.\ (9) 265 (2013), 1801--1828. 
%
\bibitem{K:1997}
A.\ Ya.\ Khinchin.
Continued fractions.
Dover Publications, Mineola, NY, 1997.
%
\bibitem{Lapidus3}
M.\ L.\ Lapidus.
Towards a noncommutative fractal geometry? {L}aplacians and volume measures on fractals.
Harmonic analysis and nonlinear differential equations, 211--252, Contemp.\ Math., 208, Amer.\ Math.\ Soc., Providence, RI, 1997.
%
\bibitem{L:2008}
M.\ L.\ Lapidus.
In Search of the Riemann Zeros.
Amer.\ Math.\ Soc., Providence, RI, 2008
%
\bibitem{Lag:1999}
J.\ C.\ Lagarias.
Geometric models for quasicrystals I. Delone sets of finite type.
Discrete Comput.\ Geom.\ (29) 21 (1999), 161--191.
%
\bibitem{Lag:2002}
J.\ C.\ Lagarias, P.\ A.\ B.\ Pleasants.
Local Complexity of {De}lone sets and crystallinity.
Canad.\ Math.\ Bull.\ (4) 45 (2002), 634--652.
%
\bibitem{Lag:2003}
J.\ C.\ Lagarias, P.\ A.\ B.\ Pleasants.
Repetitive Delone sets and quasicrystals.
Ergodic Theory Dynam.\ Systems (3) 23 (2003), 831--867.
%
\bibitem{Lot:2002}
M.\ Lothaire.
Algebraic Combinatorics on Words.
Encyclopedia of Mathematics and its Applications, 90, Cambridge Univ. Press, Cambridge, 2002.
%
\bibitem{L:85}
I.\ G.\ Lysenok.
A set of defining relations for the Grigorchuk group.
(Russian), Mat. Zametki 38 (1985), 503--516.
English translation: Math.\ Notes 38 (1985), 784--792.
%
\bibitem{Mo:1997}
R.\ V.\ Moody (ed).
The mathematics of long-range aperiodic order.
Proceedings of the NATO Advanced Study Institute held in Waterloo, 403--441, NATO Adv.\ Sci.\ Inst.\ Ser.\ C Math.\ Phys.\ Sci., 489, Kluwer Academic Publishers Group, Dordrecht, 1997.
%
\bibitem{MH:1940}
G.\ A.\ Hedlund, M.\ Morse.
Symbolic dynamics II: Sturmian trajectories.
Amer.\ J.\ Math.\ 62 (1940), 1--42. 
%
\bibitem{P:1998}
J. Patera (ed).
Quasicrystals and discrete geometry.
Proceedings of the Fall Programme held at the University of Toronto. Fields Inst.\ Monogr., 10, Amer.\ Math.\ Soc., Providence, RI, 1998.
%
\bibitem{Pav}
B.\ Pavlovi\'c.
Defining metric spaces via operators from unital {$C^{*}$}-algebras.
 Pacific J. Math. (2) 186 (1998), 285--313.
%
\bibitem{Ri2}
M.\ A.\ Rieffel.
Metrics on states from actions of compact groups.
Doc. Math. 3 (1998), 215--229.
%
\bibitem{Ri3}
M.\ A.\ Rieffel.
Compact quantum metric spaces.
Operator algebras, quantization, and noncommutative geometry, 315--330, Contemp.\ Math., 365, Amer.\ Math.\ Soc., Providence, RI, 2004.
%
\bibitem{S:2015}
J.\ Savinien.
A metric characterisation of repulsive tilings.
Discrete Comput.\ Geom.\ (3) 54 (2015), 705--716. 
%
\bibitem{Sharp:2012}
R.\ Sharp.
Spectral triples and Gibbs measures for expanding maps on Cantor sets.
J.\ Noncommut.\ Geom.\ (4) 6 (2012), 801--817. 
%
\bibitem{SBGC:1984}
D.\ Shechtman, I.\ Blech, D.\ Gratias, J.\ W.\ Cahn.
Metallic phase with long-range orientational order and no translational symmetry.
Phys.\ Rev.\ Lett.\ (20) 53 (1984), 1951--1953.
%
\end{thebibliography}
\end{document}